\documentclass{article}
\usepackage{amsmath,amssymb,amsthm}
\usepackage[usenames]{color}
\usepackage{bm}
\usepackage{graphicx}

\setlength{\textheight}{23cm}
\setlength{\textwidth}{15cm} 
\setlength{\topmargin}{-2cm} 
\setlength{\evensidemargin}{0.5cm} 
\setlength{\oddsidemargin}{0.5cm} 

\newtheorem{theorem}{Theorem}[section]
\newtheorem{remark}[theorem]{Remark}
\newtheorem{lemma}[theorem]{Lemma}
\newtheorem{definition}[theorem]{Definition}
\numberwithin{equation}{section}

\newcommand{\Z}{\mathbb{Z}}
\newcommand{\C}{\mathbb{C}}
\newcommand{\R}{\mathbb{R}}
\newcommand{\F}{\mathbb{F}}
\newcommand{\bydef}{\stackrel{\mbox{\tiny\textnormal{\raisebox{0ex}[0ex][0ex]{def}}}}{=}}
\newcommand{\conv}[1]{\langle #1 \rangle}

\newcommand{\bb}{\bar{b}}
\newcommand{\bu}{\bar{u}}
\newcommand{\bkappa}{\bar{\kappa}}
\newcommand{\bx}{\bar{x}}
\newcommand{\hx}{\hat{x}}
\newcommand{\hu}{\hat{u}}
\newcommand{\hkappa}{\hat{\kappa}}
\newcommand{\hb}{\hat{b}}
\newcommand{\hr}{\hat{r}}
\newcommand{\rstar}{r^*}

\newcommand{\FF}{\mathcal{F}}
\newcommand{\BB}{\mathcal{B}}
\renewcommand{\AA}{\mathcal{A}}

\newcommand{\norm}[2]{|#1|_{#2}}
\newcommand{\knorm}[1][k]{\norm{#1}{\bkappa}}

\newcommand{\kk}[1][k]{\norm{#1}{\bkappa}}
\newcommand{\roundup}{\,\,\uparrow}
\newcommand{\rounddown}{\,\, \downarrow}
\newcommand{\CC}{\mathcal{C}_P}
\newcommand{\EE}{\mathcal{D}_P}
\newcommand{\RJ}{\R^J}
\newcommand{\CJ}{\C^J}
\newcommand{\FJ}{\CJ}
\newcommand{\FFJ}{\F^J}

\newcommand{\PPP}{\mathcal{P}}

\newcommand{\RRR}{\mathcal{R}}
\newcommand{\tRRR}{\widetilde{\RRR}}

\newcommand{\UUU}{\mathcal{U}}
\newcommand{\VVV}{\mathcal{V}}

\newcommand{\hsp}{\hspace{2mm}}
\newcommand{\yP}{y_P}
\newcommand{\J}{\mathbb{I}}
\newcommand{\conj}{\mathbb{I}_*}
\newcommand{\fullconj}{\mathbb{J}_*}
\newcommand{\orbitcount}{\Gamma_{\!\G}}
\newcommand{\EEE}{\mathcal{E}}
\newcommand{\E}{\mathrm{E}}
\newcommand{\W}{W}
\newcommand{\tp}{\widetilde{p}}
\newcommand{\M}{\mathcal{M}}

\newcommand{\re}{\operatorname{Re}}
\newcommand{\tr}{\operatorname{tr}}
\newcommand{\Ssym}{\mathcal{S}}
\newcommand{\BBsym}{\BB^{\Ssym}}
\newcommand{\YY}[1]{Y^{[#1]}}
\newcommand{\WY}[1]{W^{[#1]}}
\newcommand{\ZY}[1]{Z^{[#1]}}

\newcommand{\tc}{\tilde{c}}
\newcommand{\tkappa}{\tilde{\kappa}}
\newcommand{\PHI}[1]{\Psi^{[#1]}}
\newcommand{\II}{\mathcal{I}}
\newcommand{\morseindex}{\iota}
\newcommand{\negnum}{\mathcal{N}}
\newcommand{\THETA}{\Theta_{\hx}}

\renewcommand{\L}{\mathcal{L}}
\newcommand{\LL}{\widetilde{\L}}
\newcommand{\hLLl}[1][l]{\widehat{\L}_{#1}}
\newcommand{\ls}{l} 
\newcommand{\DLk}{\boldsymbol{\Delta}^{\L}_k}
\newcommand{\Dk}[1][k]{\Delta_{#1}}
\newcommand{\Dkj}[2][k]{\Delta_{#1}^{#2}}
\newcommand{\Dkk}{\Dk \kappa}
\newcommand{\PP}{P}

\newcommand{\G}{\mathcal{G}}
\newcommand{\ZZ}{\mathcal{Z}}
\newcommand{\talpha}{\widetilde{\alpha}}
\newcommand{\Zdom}{\ZZ_{\text{\textup{dom}}}}
\newcommand{\Zsym}{\ZZ_{\text{\textup{sym}}}}
\newcommand{\Xsym}{X^{\sym}}
\newcommand{\X}{\mathrm{\mathbf{X}}}
\newcommand{\XK}{\X^K_0}
\newcommand{\Xinfty}{\X^\infty_0}
\newcommand{\tX}{\mathrm{\mathbf{\tilde{X}}}}
\newcommand{\e}{\mathrm{e}}
\newcommand{\sym}{\text{\textup{sym}}}
\newcommand{\Gacts}[1][k]{\G.#1}
\newcommand{\gacts}[1][k]{\ggacts{g}{#1}}
\newcommand{\ggacts}[2]{#1.#2}
\newcommand{\x}{\mathrm{x}}
\newcommand{\tf}{\widetilde{f}}
\newcommand{\tE}{\widetilde{E}}

\newcounter{thmlistcntr}
\newenvironment{thmlist}{\begin{list}{\textup{(\alph{thmlistcntr})}}{\usecounter{thmlistcntr}\setlength{\leftmargin}{5ex}\setlength{\labelwidth}{4ex}\setlength{\parskip}{0ex}\setlength{\itemsep}{0ex}\setlength{\topsep}{0.2ex}\setlength{\parsep}{0.0ex}}}{\end{list}}

\title{Optimal periodic structures with general space group symmetries in the Ohta-Kawasaki problem}

\author{Jan Bouwe van den Berg\thanks{VU Amsterdam, Department of Mathematics,
De Boelelaan 1081, 1081 HV Amsterdam, The Netherlands,  {\tt janbouwe@few.vu.nl};
partially supported by NWO-VICI grant 639033109.}
\and {J.F. Williams\thanks{Simon Fraser University, Department of Mathematics,
8888 University Drive Burnaby, BC, V5A 1S6, Canada,  {\tt jfwillia@sfu.ca}.}}}

\begin{document}
\maketitle

\begin{abstract}
We consider the problem of rigorously computing periodic minimizers to the Ohta-Kawasaki energy. We develop a method to prove existence of solutions and determine rigorous bounds on the distance between our numerical approximations and the true infinite dimensional solution and also on the energy. We use a method with prescribed symmetries to explore the phase space, computing candidate minimizers both with and without experimentally observed symmetries.
We find qualitative differences between the phase diagram of the Ohta-Kawasaki energy and self consistent field theory when well away form the weak segregation limit.
\end{abstract}

\section{Introduction}
\label{s:intro}

The Ohta-Kawasaki energy is an important model in applied mathematics developed initially
as a model for diblock copolymers, and more recently, for self-assembly driven by electrically charged phase separation.
More generally it is a canonical model for energy
driven pattern formation in systems with
Coulomb-like interactions of short-range repulsion and long-range attraction. In this paper, we use symmetry preserving rigorously verified numerical
techniques to investigate the landscape of the Ohta-Kawasaki energy in
three dimensions.\\

\noindent  {\bf The diblock copolymer problem} 
Diblock copolymers are soft materials formed with linear chain molecules consisting of two immiscible covalently bonded subchains, type A and type B, which
can self-assemble into various configurations depending on physical parameters.
The immiscibility of the subchains leads to
short scale repulsion, which competes with long-range attraction driven by the bonding. This interaction creates a rich class of complex structures \cite{BF}. These different geometries can provide materials with distinct mechanical, optical, and magnetic properties.

In material science, the energy landscape of diblock copolymers is primarily
described using two parameters: $\chi N$, the product of the Flory-Huggins interaction parameter
    and   $N$, the index of polymerization; and $f$, the molecular weight which measures the relative length of the
    A-monomer chain compared with the length of the whole macromolecule.
The Flory-Huggins interaction parameter $\chi$ measures the incompatibility of the
two monomers. 

The primary problem is: given a pair $(f, \chi N)$ what configuration has the lowest free energy? Two foundational results in understanding the energy landscape field are due to Khandpur et al \cite{Khand} who computed the first experimental energy landscape and Matsen and
Schick \cite{MATSENSCHICK} who pioneered the Self Consistent Field Theory
(SCFT) computational technique for diblock copolymers. Both results are described in the review article \cite{BatesRev}.

The only discovered structures believed to be global minimizers in large domains are: one-dimensional lamellae, cylinders packed on hexagonal lattices, double gyroids, body-centred cubically packed spheres and close-packed spheres.
All these solutions are periodic with period specified as part of
energy minimization, {\em not externally}. 
Experimentally these states are identified by performing small angle X-ray scattering. Each solution profile has a distinct set of crystallographic symmetries which can be identified from the resulting power spectra.\\

\noindent {\bf The Ohta-Kawasaki energy} 
Ohta and Kawasaki~\cite{OK1} derived a density functional theory approximating the diblock copolymer free energy in terms of the averaged macroscopic monomer density. Defining an indicator function
$u(\x) \in [-1,1]$
with $u \equiv -1$ being all type A and $u \equiv +1$ all type B the energy can be rescaled as
\begin{equation}
\label{eq:OKEnergy}
    \EEE(u) = \frac{1}{|\Omega|} \int_\Omega \frac{1}{2 \gamma^2} |\nabla u |^2 + \frac{1}{4}(1-u^2)^2+ \frac{1}{2} | \nabla v |^2 \, d\x.
\end{equation}
Here $u$ is a periodic function on a  domain
    $\Omega $,
    $m = \frac{1}{|\Omega|} \int_\Omega u(\x) d\x$ is its average and $\gamma$ is
	a parameter related to $\chi N$ \cite{Choksi2003}. 
	Physically the mass fraction $m$ 
satisfies $-1 \le m \le 1$ and $\gamma > 0$.
	The function $v$ is the unique solution of
    the linear elliptic problem $-\Delta v =  u - m$
    with periodic boundary conditions satisfying  $\int v(\x) d\x =0$.

    Critical points of the energy (\ref{eq:OKEnergy}) are most easily found by taking the
    gradient in $H^{-1}$ \cite{CPW}:
    \begin{equation}
    \label{eq:OK}
      - \Delta \left(\frac{1}{\gamma^2} \Delta u + u - u^3\right) - (u-m) =0.
    \end{equation}

\begin{figure}
      \centering
            \includegraphics[width=\textwidth]{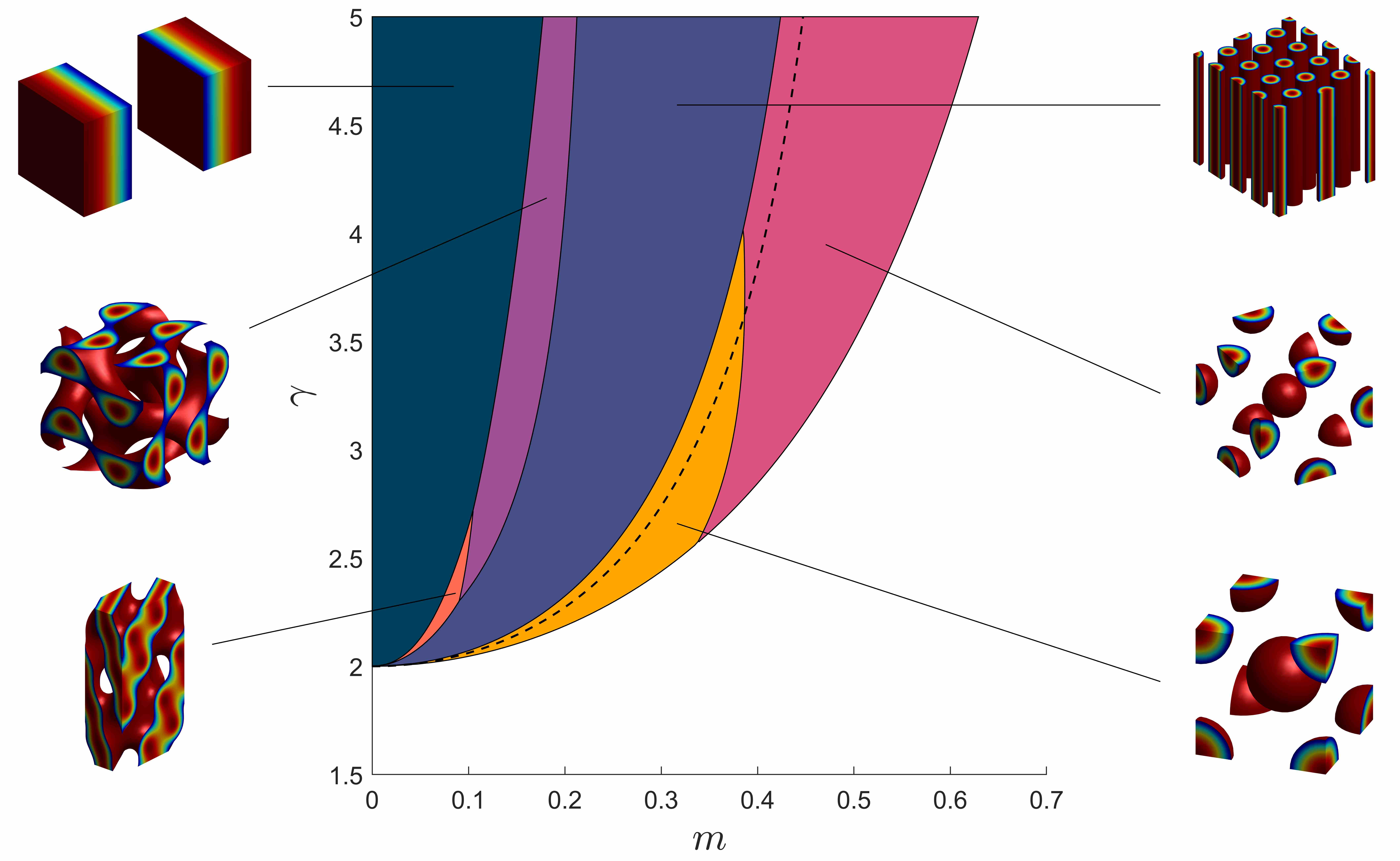}

            \caption{Phase diagram of global minimizers for the Ohta-Kawasaki energy (see the main text for an explanation of the terminology ``global minimizer''). A typical profile from each region is indicated. The mixed state, $u \equiv m$, is linearly stable below the dashed curve and the unique global minimizer below the coloured regions. The depicted profiles, and all in this paper, are level sets at $u \equiv 0$. That is, the positive and negative spaces represent those filled with one of the two different monomers.}
			\label{fig:mainresult}
\end{figure}

\noindent {\bf Crystallographic symmetries}  Experimentally, energy minimizers are seen to be periodic. In samples containing hundreds, or more, of repeated cells in all directions, the periodicity is independent of domain size. Crystallographic, or space group, symmetries combine the translational
symmetries of a lattice together with other elements such as
directional flips, rotation and screw axes.
Physically, determining the space group a given material belongs to
is an essential step in structure analysis as it minimizes the
information required for a complete description.
We use this reduction to construct efficient numerical methods
which guarantee our solutions have the desired symmetry.
There are 219, 230 including mirror images, three-dimensional
crystallographic space groups. 

Physically, global minimizers have been identified which are lamellar, and from space groups: 70 (O$^{70}$), 183 (HPC), 216 (CPS), 229 (BCC spheres) and 230 (double gyroid). We use this fact to
construct solutions possessing the correct symmetries. Examples of these phases and the energy landscape appear in Figure \ref{fig:mainresult}, which summarizes the main result of this paper.
It was constructed by finding minima of \eqref{eq:OKEnergy} within 
certain symmetry classes for many different values of $m$ and $\gamma$, 
where we vary both the profiles \emph{and} the length scales.  While complete details of how this was done are presented in Section~\ref{s:numerics}, let us briefly explain the main ideas here.  
The technique developed in Sections~\ref{s:symmetries}--\ref{s:fixedpoint} allows us to validate numerical approximations of critical points in the sense that a unique (true) critical point lies within an explicitly given distance of the numerical approximation, which is described by finitely many Fourier modes. In particular, we control all errors induced by the finite truncation. Moreover, we are able to establish the Morse index of the critical point and hence decide on (in)stability (see Section~\ref{s:stability}). Finally, we can compute the energy with excellent rigorous error control (see Section~\ref{s:energy}), hence we can, for fixed parameter values $m$ and $\gamma$, determine the minimizer of the energy.

A few caveats should be mentioned.
We do \emph{not} claim to have a mathematical proof that the solutions found are \emph{global} minimizers, not even among all periodic profiles (there is no guarantee that the global minimizing state is periodic, although it is widely believed to be the case). Such a result seems outside the realm of possibility of any method at the current status of science. However, we did an extensive search in many different symmetry groups, including all symmetry groups where stable states have been observed for any parameter values either experimentally or numerically. This leads us to candidates which we \emph{prove} to be local minimizers within the associated symmetry class, i.e., we prove that within an explicit distance (in the norm detailed in Theorem~\ref{thm:contraction}) of the numerical approximation there is a unique point that is stationary with respect to variations in  both the profile and  the length scales. Moreover, we prove that this stationary point is a local minimizer with respect to all such variations (Morse index 0). We then compute the energy of these local minimizers with a rigorous explicit error bound (confidence interval), which allows us to conclude which of the local minimizers in the various space groups has the lowest energy, i.e., we prove that is has lower energy than any of the other local minima that we have found. To avoid overly complicated formulations, throughout the remainder of this paper we use the terminology ``global minimizers'' for these stationary points. \\

\noindent {\bf Connection to previous results.}
Most relevant to the
current work are results discussing the structure of solutions near the point $m=0, \gamma = 2$ such as \cite{CPW,CMW,vdBW2}.
The existence of various solutions has been demonstrated, and it is known that
global minimizers are periodic as $\gamma \to \infty$ for small volume fraction \cite{CP1,CP2,Goldman2014}. We are only
interested in $\gamma \ge 2$ as no non-trivial minimizers exist for $\gamma < 2$ \cite{CPW}. 

There have also been considerable numerical computations by
solving the stationary problem, \cite{SCN,Wanner, vdBW2, vdBW3}, integrating the PDE \cite{CPW,Teramoto}, minimizing the functional,  and, in the material science community, using Self-Consistent Field Theory (SCFT) for this problem \cite{BatesRev}. 
SCFT is the current state of the art for realistic physical simulations of block copolymers. In \cite{Choksi2003} the authors compare the SCFT approach to the Ohta-Kawasaki energy 
and derive the correspondence between the SCFT parameters ($\chi N, f$) and those used here ($\gamma, m$) in the limit $(\gamma \to 2^+, m \to 0).$

In papers \cite{vdBW2} and \cite{vdBW3}, the present authors developed rigorous numerical
methods to explore the parameter space efficiently, first in two dimensions
by following equal energy curves and subsequently by finding symmetric solutions in three
dimensions. This work is the extension of those two works to fully three-dimensional structures where we also rigorously optimize the length scales and provide rigorous bounds on the energy.
This method provides a rigorous bound on radius of a ball containing both the numerical approximation and the exact solution. The foundations of this functional analytic technique are outlined in \cite{JBAMS}; see also Section~\ref{s:fixedpoint} for further references.\\

\noindent{\bf Main contributions}
The literature of experimental results, both computational and physical, contains many examples showing that what was once considered true no longer is. The field of diblock copolymers is no exception. Early physical experiments suggested the existence of a stable perforated lamella phase which was later shown to be (physically) metastable\footnote{Critical points which are local minimizers but not global minimizers are typically called metastable in the experimental literature. Random fluctuations in the system can, eventually, lead to lower energies.}. And, the first computational phase diagrams omitted the O$^{70}$ phase.

These refinements are due to improved techniques and resources. In this paper, we present an additional check when examining such problems. The use of rigorous numerics provides complete confidence in one's results and a way to determine if solutions are locally stable or not, thus ruling out mathematical metastability.

To this end, the main contributions in the theoretical development of this work are the extension of \cite{vdBW3} to all 230 space groups and to include domain scale optimization, rigorous bounds on the energy of solutions and a rigorous method for computing the Morse index of solutions. We then used these methods to determine the landscape of energy minimizers, prove the existence of additional profiles such as O$^{70}$, the perforated lamella and various unseen exotic profiles and find the first qualitative difference between the SCFT model and the OK energy. In particular, the BCC packed spheres are not minimizers indefinitely but rather the close-packed spheres are energetically preferable in a much larger of the phase space for the OK energy than the SCFT model. This is due to the difference in the potentials used in the two models \cite{Choksi2003}.\\

\noindent{\bf Outline of the paper}
The remainder of the paper is laid out as follows. In Section
\ref{s:symmetries} we explain how we set up the variation of length scales in the context of an arbitrary symmetry group. Optimization over both the symmetric profiles and length scales leads to a zero finding problem, which is introduced in Section~\ref{s:zerofinding}. The relevant group actions in Fourier space are analyzed in Section~\ref{s:groupactions}, where we also prove several lemmas necessary to extend the specialized setting of~\cite{vdBW3} to the general approach in the current paper. In Section~\ref{s:fixedpoint}, the fixed point theorem is formulated and proven. Sections \ref{s:energy} and \ref{s:stability} present the rigorous computations of the energy and Morse Index respectively. Section 
\ref{s:numerics} presents a description of our computational strategy and our numerical results. Many of the detailed estimates required for the proof reside in Appendix A.

Matlab code for the proofs and figures presented in this paper is available at~\cite{codeoptimalSG}.

\section{The symmetries}
\label{s:symmetries}

Since are interested in periodic critical points of the energy $\EEE$, we expand $u$ as  
\begin{equation}\label{e:Fourier}
  u (\x)= \sum_{k \in \Z^3} c_k \exp(\textup{i} \L k \cdot \x),
\end{equation}
for some invertible $3 \times 3$ matrix $\L$.
The Laplacian corresponds to a diagonal operator on the Fourier coefficients
\[ 
  -\Delta u (\x) = 
  \sum_{k \in \Z^3} \DLk c_k \exp(\textup{i} \L k \cdot \x)
\]
where
\[
  \DLk \bydef \sum_{j=1}^{3} ( \L k )_j^2 .
\]

\begin{remark}
To relate $\L$ to periodicity in $\x$ we observe that $u(\x+2\pi (\L^{-1})^T \vec{e}_j) = u(\x)$ for $j=1,\dots, 3$, where $\{\vec{e}_j\}_{j=1}^3$ form the standard basis of $\R^3$. In particular, the (fundamental) domain of periodicity $\Omega$ is the parallelepiped given by $ \{\x \in \R^3 : \x =  2 \pi (\L^{-1})^T \tilde{\x} \text{ with } \tilde{\x} \in [0,1]^3 \}$.
\end{remark}

\begin{remark}
To optimize over the length scale(s) we will need to be able to scale the vectors in the unit cell. We incorporate this into $\L$ by decomposing it as
\begin{equation}\label{e:Ldecomposition}
  \L = \LL \left( \begin{array}{ccc} 
  \ell_1 & 0 & 0 \\
  0 &  \ell_2 & 0 \\
  0 & 0 & \ell_3 
  \end{array} \right),
\end{equation}
where $\LL$ is independent of the length scale(s) and depends on the symmetry class under consideration only, see Remark~\ref{r:tildeL} below.
This thus corresponds to the fundamental domain
\[
  \{\x \in \R^3 : \x =  2 \pi (\LL^{-1})^T \hat{\x} \text{ with } \hat{\x} \in [0,1/\ell_1] \times [0,1/\ell_2] \times [0,1/\ell_3] \} .
\]
\end{remark}

In addition to periodicity we assume
 invariance under some space group $\G$:
\[
  u(C_g \x + 2\pi  (\L^{-1})^T  D_g)=u(\x) \qquad\text{for all } g \in \G,
\]
where $C_g$ is an orthogonal matrix ($C_g^{-1} = C_g^T$), and $D_g \in [0,1]^3$.
In this paper we will only consider space groups that have a priori fixed angles in the periodicty cell and that do not allow continuous shifts. While we believe our methods can certainly be adopted to such settings, this will involve some additional technicalities (mainly due to the presence  of continuous symmetries), which we want to avoid here, since those groups are not required to describe the patterns that are observed in the Ohta-Kawasaki problem. Nevertheless, this could be useful in other applications, and we plan to return to these issues in future work.
This means that we focus on the following space groups~\cite{Bilbao,ITA}:
\begin{alignat*}{1}
  \text{orthorhombic groups:}~ & \text{16--24} \text{ and } \text{47--74}, \\
  \text{tetragonal groups:}~ &  \text{81-98} \text{ and } \text{111--142}, \\
  \text{trigonal groups:}~ & \text{147--155} \text{ and } \text{162--167}, \\
  \text{hexagonal groups:}~ & \text{174--182} \text{ and } \text{187--194}, \\
  \text{cubic groups:}~ &  \text{195--230},
\end{alignat*}
where in each type we have, in view of the discussion on continuous symmetries above, excluded the so-called pyramidal groups.
Furthermore, throughout we do not describe (essentially) one- and two-dimensional cases (e.g.\ lamellae, cylinders), but the techniques carry over immediately, and the code also covers  these lower dimensional cases.

Several remarks are in order to describe the relation between periodicity and the symmetries encoded in the space group.
\begin{remark}\label{r:match}
Depending on the type of group $\G$,
the symmetries may imply that some of the length scales $\ell_j$ in the decomposition~\eqref{e:Ldecomposition} are not independent. 
In particular,  there are one, two or three independent scale variables $\ls_j$. 
We denote the number of independent length scales by $J$.
For the orthorhombic groups we have $J=3$ and we simply set  $\ell_j = \ls_j$ for $j=1,2,3$.
For the tetragonal, trigonal and hexagonal groups we have $J=2$: $\ell_1=\ell_2=\ls_1$
and $\ell_3 = \ls_2$. For the cubic groups we have $J=1$,
hence $\ell_1=\ell_2=\ell_3 =\ls_1$. 
In particular, we introduce the dilation matrices $\hLLl$ as follows:
\[
  J=3:~ \hLLl  \bydef 
  \left( \begin{smallmatrix}
    l_1 & 0 & 0 \\
    0 &  l_2 & 0 \\
    0 & 0 & l_3 
    \end{smallmatrix} \right) ,
	\qquad
  J=2:~ \hLLl \bydef   
  \left( \begin{smallmatrix}
    l_1 & 0 & 0 \\
    0 &  l_1 & 0 \\
    0 & 0 & l_2 
    \end{smallmatrix} \right),
	\qquad
  J=1:~ \hLLl \bydef   
  \left( \begin{smallmatrix}
    l_1 & 0 & 0 \\
    0 &  l_1 & 0 \\
    0 & 0 & l_1 
    \end{smallmatrix} \right) .
\]

The coordinates are thus gathered according to which independent length scale $l_j$ they are linked to. 
The matrices $C_g$ act block-wise 
on these grouped coordinates in the sense that
$C_g$ is diagonal when $J=1$, while when $J=2$ then $C_g$ is blockdiagonal with blocks of size $2$ and $1$: $(C_g)_{ii'}=0$ for $i=1,2$, $i'=3$ and for 
$i=3$, $i'=1,2$. We say that $C_g$ has a periodicity compatible block structure.
A different way to characterize this compatibility is through the commutativity property
\begin{equation}\label{e:CLcommute}
    C_g \hLLl = \hLLl C_g \qquad \text{for any } l \in \RJ_+ \text{ and any } g\in \G.
\end{equation}
\end{remark}

\begin{remark}\label{r:tildeL}
For the orthorhombic, tetragonal and cubic space groups we choose the factor $\LL$ in the decomposition~\eqref{e:Ldecomposition} to simply be the identity. For the tetragonal and hexagonal space groups we choose
\[
  \LL = \LL_{\triangle} \bydef \left( \begin{array}{ccc}
  1 & 1/2 & 0  \\
  0 & \sqrt{3}/2 & 0 \\
  0 & 0 & 1
   \end{array} \right) .
\]
We note that $\LL_{\triangle}$ commutes with $\hLLl$ for $J=2$ and any $l\in \R^2_+$. 
With this choice of $\LL$ the matrices $C_g$ map the lattices $(\LL^{-1})^T\Z^3$ and $\LL\Z^3$ to themselves for any $g\in \G$.
\end{remark}

The compatibility between the independent length scales and the symmetry group
described in Remark~\ref{r:match} implies that
we may simplify the description of $\DLk$ as follows.
No cross product terms $\ls_j \ls_{j'}$ with $j \neq j'$ appear in the expression for $\DLk$. This motivates us to
introduce $\kappa_j=\ls_j^2$.
We distinguish four cases and in each write 
$\DLk=\Dkk$, where,  
depending on the type of space group, 
\begin{alignat*}{3}
\text{orthorhombic}:&\quad&
\Dkk &\bydef \kappa_1 k_1^2  + \kappa_2 k_2^2 + \kappa_3 k_3^2 && J=3,\\
\text{tetragonal}:&& 
\Dkk &\bydef \kappa_1 (k_1^2 + k_2^2) + \kappa_2 k_3^2  && J=2,\\
\text{trigonal and hexagonal}:
&& \Dkk &\bydef \kappa_1 \bigl(k_1^2 + k_1k_2 + k_2^2 \bigr) + \kappa_2 k_3^2 && J=2, \\
\text{cubic}:&&
\Dkk &\bydef \kappa_1 (k_1^2 + k_2^2 + k_3^2) &\qquad& J=1. 
\end{alignat*}
We summarize these as 
\[
  \Dkk = \sum_{j=1}^J \Dkj{j} \kappa_j,
 \qquad\text{where } \Dkj{j} \in [0,\infty) 
 \text{ for all }k\in \Z^3 \text{ and } 1\leq j \leq J.
\]
Clearly $\Dkk$ is linear in $\kappa$ for fixed $k$, while it is a positive
definite quadratic form in $k$ for fixed $\kappa \in \RJ_+$. The
linearity in $\kappa$ will be most important, hence it is stressed by choosing
the notation~$\Dkk$. On occasion we will also interpret, for fixed $k$, \
the vector
$\Dk = \{\Dkj{j}\}_{j=1}^J$ as an element of the dual of $\RJ$.

A crucial property of $\Dkj{j}$ is that 
\begin{equation}\label{e:DkjCginvariance}
  \Dkj[\LL^{-1} C_g \LL k]{j} =\Dkj[k]{j} \qquad\text{for any }
  k\in \Z^3, 1\leq j \leq J, \text{ and } g \in \G.
\end{equation} 
For the orthorhombic, tetragonal and cubic space groups this follows from the orthogonality and the periodity compatible block structure of $C_g$, whereas for the tetragonal and trigonal space groups~\eqref{e:DkjCginvariance} puts an additional constraint on the admissible matrices $C_g$.

Finally, to recover $\L$ from $\kappa$ we use $\L = \L_\kappa \bydef \LL \hLLl[\sqrt{\kappa}]$.
The corresponding domain of periodicity is denoted by
\begin{equation}\label{e:omegakappa}
  \Omega_\kappa \bydef 
  \{\x \in \R^3 : \x =  2 \pi (\L_\kappa^{-1})^T \tilde{\x} 
                       \text{ with } \tilde{\x} \in [0,1]^3 \} .
\end{equation}
It follows from~\eqref{e:CLcommute} that
\begin{equation}\label{e:LCL}
  \L_\kappa^{-1} C_g \L_\kappa= \LL^{-1} C_g \LL,
\end{equation}
and since $C_g$ maps the lattice $\LL \Z^3$ to itself, see Remark~\ref{r:tildeL}, it follows from~\eqref{e:LCL} that
$C_g$ also maps the lattice 
$(\L_\kappa^{-1})^T \Z^3$ to itself.

\section{The zero finding problem}
\label{s:zerofinding}

We aim to find stationary points of the energy $\EEE$ with respect to variations in the Fourier coefficients~$c_k$ and length scales $\kappa \in \RJ_+$, under the constraint $c_0=m$. We express $\EEE$ in terms of $\kappa$ and~$c$:
\begin{equation}\label{e:defE}
  E(\kappa,c) \bydef 
      \frac{1}{2} \sum_{k \in \Z^3 \setminus \{0\}} \PP(\Dkk) \, c_k c_{-k}
  \, +  \frac{1}{4} \conv{c^4}_0  + \frac{1-2m^2}{4} ,
\end{equation}
where $\PP:(0,\infty) \to \R$ is given by
\[
  \PP(y) \bydef \frac{1}{\gamma^2} \,\! y  - 1 + \frac{1}{y} .
\]
In what follows 
we will use that for large Fourier modes $\PP(\Dkk)$ has fixed sign (positive) and its size increases monotonically. Indeed 
\begin{alignat}{2}
  \PP(y) & > 0  &\qquad&\text{for all } y > \yP, \label{e:PyP1} \\
  \PP'(y) & > 0 &\qquad&\text{for all } y > \yP, \label{e:PyP2}
\end{alignat}
where 
\begin{equation}\label{e:defy0}
  \yP \bydef \begin{cases}
   \gamma  & \quad\text{for } \gamma \leq 2,\\
   \frac{\gamma^2+\sqrt{\gamma^4-4\gamma^2}}{2}
    & \quad\text{for } \gamma > 2.\\
   \end{cases}
\end{equation}
The parameter range $\gamma \leq 2$ is included for completeness only. No pattern formation is observed in that regime due to global convexity of the energy functional~\cite{CPW}.

Throughout this paper we use the notation $\conv{c \, c'}$ for the discrete convolution product
\[
\conv{c \, c'}_k \bydef \sum_{k_1+k_2=k} c_{k_1} c'_{k_2},
\]
which naturally extends to powers. In particular
 \[
   \conv{c^4}_0 = \sum_{k_1+k_2+k_3+k_4=0} c_{k_1} c_{k_2} c_{k_3} c_{k_4} .
 \] 
The normalization of $E$ is such that the trivial uniform state has zero energy.

In Fourier space the PDE~\eqref{eq:OK} is transformed to
\begin{equation}\label{e:fk}
  f_k(\kappa,c) \bydef  \PP(\Dkk) c_k + \conv{c^3}_k  =0
\end{equation}
for $k \in \Z \setminus \{0\}$.
\begin{remark}\label{r:dEdc}
Equation~\eqref{e:fk} corresponds to stationarity of $E$ with respect to variations in the Fourier coefficients. Indeed, by using that $\Dk[-k] = \Dk$ we see that $\frac{\partial E}{\partial c_{k}}=f_{-k}$ for all $k \in \Z \setminus \{0\}$.
\end{remark}
Stationarity of the energy with respect to variations in $\kappa$
is equivalent to 
\begin{equation}\label{e:hj}
  h_j(\kappa,c) \bydef \frac{1}{2} \sum_{k \in \Z^3 \setminus \{0\} } \PP'(\Dkk)  \, \Dk^j \, c_k c_{-k} = 0,
\end{equation}
for $1 \leq j \leq J$.
Namely,
\begin{equation}\label{e:dEdkappa}
    \frac{\partial E}{\partial \kappa_j}(\kappa,c)= h_j(\kappa,c)  =0
	\qquad \text{ for all } j =1,\dots,J,
\end{equation}
corresponds to stationarity of the energy $\EEE$ with respect to variations in the proportions of the fundamental domain of the periodicity cell $\Omega=\Omega_\kappa$ which respect the symmetry type (in the sense of Remark~\ref{r:match}).

For $u$ a real-valued function, we have the symmetry $c_{-k}=c_k^*$, and in particular $c_k c_{-k} = |c_k|^2$. However, we choose to work over $\C$ throughout without requiring $c_{-k}=c_k^*$, 
as this unifies some of the arguments. 
In Definition~\ref{def:conjugation} we will introduce a conjugation operator which is used to recover real-valuedness of the solution a posteriori, see Theorem~\ref{thm:contraction}.

We choose as the norm on the Fourier indices
\begin{equation}\label{e:knorm}
   \knorm \bydef \sqrt{\Dk \bkappa},
\end{equation}
for some $\bkappa \in \R^J_+$ to be fixed later.
Indeed, $\bkappa$ will be chosen to be the numerical approximation of the optimal domain size parameters. 
On the set of Fourier coefficients we introduce the norm on 
\begin{equation}\label{e:nu-norm}
  \| c \|_\nu   \bydef  \sum_{k \in \Z^3} |c_k|  \nu^{\knorm}.
\end{equation}
The corresponding Banach space is a Banach algebra under convolution multiplication:
\begin{equation}\label{e:BA}
  \| \conv{c \, c'} \|_\nu \leq \| c \|_\nu  \| c' \|_\nu .
\end{equation}

On $\RJ$ and $\CJ$ we  choose the norm
\begin{equation}\label{e:normFJ}
   | \kappa |_{\FFJ} \bydef \max_{1\leq j \leq J} \frac{|\kappa_j|}{\bkappa_j} 
  \qquad \text{for } \F = \R \text{ and } \F=\C.
\end{equation}
The norm on the dual is then given by 
\[ 
  | q |^*_{\FFJ}  \bydef \sum_{j=1}^J |q_j|  \bkappa_j 
  \qquad\text{for } q \in \FFJ.
\]
In particular, this choice of norm has the convenient property 
(writing $\Dk = (\Dkj{j})_{1\leq j\leq J}$)
\begin{equation}\label{e:choice}
  | \Dk |^*_{\FFJ} = \Dk \bkappa, 
   \qquad \text{for } \F = \R \text{ and } \F=\C.
\end{equation}

\begin{definition}\label{def:conjugation}
On $\CJ$, $\C^{\Z^3}$ and then on $\CJ\times \C^{\Z^3}$ we introduce the conjugation operation
\begin{alignat*}{1}
  (\fullconj \kappa)_j &\bydef \overline{\kappa_j} ,\\
  (\fullconj c)_k & \bydef \overline{c_{-k}} ,\\
  \fullconj (\kappa,c) & \bydef (\fullconj \kappa, \fullconj c),
\end{alignat*}
where the overline denotes complex conjugation.
\end{definition}
The convolution product is $\fullconj$-equivariant:
\[
  \conv{\fullconj c \, \fullconj c'} = \fullconj \conv{c \, c'}.
\]
It then follows from the formulas~\eqref{e:fk} and~\eqref{e:hj} for $f$ and $h$,
combined with the fact that $P$ is real analytic, that
\begin{equation}\label{e:hfconjequivariant}
  \fullconj \bigl(h(\kappa,c),f(\kappa,c) \bigr) =
  \bigl(h(\fullconj(\kappa,c)),f(\fullconj(\kappa,c)) \bigr),
\end{equation}
and, intimately related to this (since $f$ and $h$ correspond to the partial derivatives of $E$),
\[
  E(\fullconj(k,c)) = \overline{E(k,c)}.
\]

\section{Group actions in Fourier space}
\label{s:groupactions}

We briefly summarize the setup from~\cite{vdBW3}, where more details can be found. 
The symmetry operations in $\G$ lead to a right group action $\gamma_g$ on the Fourier coefficients. Namely, when $c$ denote the Fourier coefficients of $u(\x)$, see~\eqref{e:Fourier}, then
\[
u(C_g \x + D_g) = 
\sum_{k \in \Z^3} (\gamma_g c)_k \exp(\textup{i} \L k \cdot \x),
\]
where we write
\begin{equation}\label{e:defgamma}
  (\gamma_g c)_k = \alpha_g(k) \, c_{\beta_g(k)}.
\end{equation}
Here (in view of~\eqref{e:LCL})
\begin{equation}\label{e:defbeta} 
  \beta_g (k) = \LL^{-1} C_g \LL k
\end{equation}
is well defined as a map from $\Z^3$ to itself, see Remark~\ref{r:tildeL},
and
\[ 
  \alpha_g(k) =  \exp \bigl( 2\pi \textup{i} \LL^{-1} C_g \LL k \cdot D_g \bigr)
  = \exp \bigl(2\pi \textup{i} \beta_g (k)  \cdot D_g \bigr) .
\]
We see that $\alpha_g(k) \in \{ z \in \C : |z|=1 \}$ for all $k \in \Z^3$ and all $g \in \G$. It satisfies 
\[
  \alpha_{g_1 g_2} (k) = \alpha_{g_1} (\beta_{g_2} (k)) \alpha_{g_2}(k),
\]
as well as 
\begin{equation}\label{e:alphaadd}
	 \alpha_g(k+k')=\alpha_g(k)\alpha_g(k') .
\end{equation}
The left group action $\beta_g$ acts linearly on $\Z^3$ and throughout the remainder of this paper it is denoted by 
\[
\gacts \bydef \beta_g(k).
\]
The orbit of $k \in \Z^3$ is given by
\[
  \Gacts \bydef \{ \gacts : g \in \G \},
\]
and we will denote the number of elements by $|\Gacts|$.
The stabilizer of $k \in \Z^3$ is denoted by
\[
  \G_k \bydef \{ g \in \G : \gacts[k]=k \}.
\]

\begin{remark}\label{r:invariantnorm}
In view of~\eqref{e:DkjCginvariance} and~\eqref{e:defbeta} we have
\begin{equation}\label{e:Deltajgk}
	\Delta^j_{\gacts} = \Delta^j_k
	\qquad \text{for all } 1\leq j \leq J \text{ and } k \in \Z^3.
\end{equation}
This implies that $\Dk$ is invariant under the $\beta_g$ action of $\G$.
Hence $\knorm$ is invariant under $\beta_g$. Consequently, the norm 
$\| c \|_\nu$ is invariant under $\gamma_g$.
\end{remark}

We define the Banach space of symmetric sequences by
\[ 
  \Xsym \bydef \{ c = (c_k)_{k\in{\Z^3}} : \| c \|_\nu < \infty \text{ and }
  \gamma_g c = c \text{ for all } g \in \G \}.
\]
It follows from~\cite[Lemma~3.11]{vdBW3} that symmetric solutions have only nonzero coefficients for indices in
\[
  \Zsym \bydef \{ k \in \Z^3 : \sum_{g \in \G_k} \alpha_g(k) = |\G_k| \}.
\]
In other words
\begin{equation}\label{e:xsymzsym}
  c_k = 0 \qquad \text{for all } c \in \Xsym  \text{ and } k \notin \Zsym.
\end{equation}
We choose a fundamental domain $\Zdom \subset \Z^3$, which contains precisely one element of each orbit.
The symmetry reduced indices are given by 
\[ 
   \ZZ \bydef \Zsym \cap \Zdom,
\]
and since $c_0 =m$ is not a variable, we will often restrict attention to $\ZZ_0 \bydef \ZZ \setminus \{0\}$.

The Banach space of symmetry reduced variables is given by 
\[
  \X \bydef \{ b = (b_k)_{k\in\ZZ} : \| b \|_{\X} < \infty \} ,
\]
with norm
\[
  \| b \|_{\X} \bydef \sum_{k \in \ZZ} |b_k| \omega_k ,
\] 
where the weights are given by
\[
  \omega_k \bydef |\Gacts| \nu^{\knorm} .
\]
We define the subspace $\X_0 = \{ b \in \X : b_0=0 \}$
and we identify $b \in \X_0$ with $b=(b_k)_{k \in \ZZ_0}$.
The induced norm on $\X_0$ is denoted by
\begin{equation}\label{e:X0norm}
  \| b \|_{\X_0} \bydef \sum_{k \in \ZZ_0} |b_k| \omega_k .
\end{equation}

For $k' \in \Gacts$ there is (at least one) $\widetilde{g}(k,k')\in \G$ such that $\gacts = k'$
and we define $\talpha(k,k')=\alpha_{\widetilde{g}(k,k')}^{-1} (k)$.
This definition is independent of the choice of $\widetilde{g}$ by~\cite[Lemma~3.13]{vdBW3}. 
We utilize $\talpha$ to make the relation between  $\X_0$ and $\Xsym$ explicit.
For any $k \in \Z^3$, let the basis element $\e_k$ be given by
$(\e_k)_{k'} = \delta_{k,k'}$.
Then we define the map $\sigma$ on $\X$ by
\begin{equation}\label{e:defsigma}
  \sigma(b) \bydef \sum_{k \in \ZZ} b_k \sum_{k' \in \Gacts} \talpha(k,k') \e_{k'}.
\end{equation}
It follows from \cite[Lemma~3.15]{vdBW3} that $\sigma(b) \in \Xsym$.
Moreover, $\sigma(b)_k=b_k$ for $k \in \ZZ$, 
and (\cite[Lemma~3.17]{vdBW3})
\begin{equation}\label{e:compatiblenorms}
  \|\sigma(b)\|_\nu = \| b \|_{\X}.
\end{equation}
Furthermore, since $\Xsym$ and $\X$ may be identified~\cite[Lemma~3.16]{vdBW3}, we have
\begin{equation}\label{e:pointwise}
   |c_k| \leq \|c\|_\nu \, \omega_k^{-1} \qquad \text{for all } k \in \Z^3 \text{ and }  c \in \Xsym. 
\end{equation}

\begin{remark}\label{r:extendconj}
We lift the conjugation operation, see Definition~\ref{def:conjugation},
to $\X$ as
\[
  (\conj b)_k = (\fullconj \sigma(b))_k \qquad \text{ for } k \in \ZZ. 
\]
We extend it to the product space $\FJ \times \X$, on which it acts component-wise as $\conj (\kappa,b)=(\conj \kappa,\conj b)$.
The set of conjugate symmetric elements is denoted by
\begin{equation}\label{e:defSsym}
  \Ssym \bydef \{ (\kappa,b) \in \FJ \times \X : \conj (\kappa,b) = (\kappa,b) \}  = \RJ \times \{ b \in X : \conj b = b \}.
\end{equation}
\end{remark}

\begin{remark}\label{r:explicitGamma}
In order to make the action of $\conj$ on $b$ explicit,
it is useful to introduce 
\[
  \Phi(k) \bydef \Gacts \cap \Zdom \qquad \text{for } k \in \Z^3,
\]
i.e., $\Phi(k)$ is the (unique) element in the group order of $k$ which lies in the fundamental domain $\Zdom$.
Since $\Gacts[-k] = - \Gacts$, we have $\Phi(-\Phi(-k))=k$ for $k \in \Zdom$.
We define the permutation $\tau$ on $\ZZ_0$ by 
\[
  \tau(k) \bydef \Phi(-k) \qquad\text{for } k\in \ZZ_0,
\]
which thus has the property $\tau=\tau^{-1}$.
We also set
\[
  \phi_k \bydef \talpha(\tau(k),-k) \qquad \text{for } k \in \ZZ_0 .
\]
With this notation we obtain
$\sigma(b)_{-k} =  \phi_k b_{\tau(k)}$ and 
\[
  (\conj b)_k = \overline{\sigma(b)_{-k}} =  \overline{\phi_k} \, \overline{b_{\tau(k)}} .
\]
By applying $\conj$ twice it allows that 
$\phi_k \overline{\phi_{\tau(k)}}$, hence $\phi_k = \phi_{\tau(k)}$.
For later use we introduce the linear operator (on $\X$ and then its extension to $\C^J \times \X$)
\begin{subequations}
	\label{e:defJ}
\begin{alignat}{1}
  (\J \;\! b)_k & \bydef \phi_k b_{\tau(k)},  \\
  \J\;\!(\kappa,b) & = (\kappa,\J\;\! b ),  
\end{alignat}
\end{subequations}
so that $\conj b = \overline{\J\;\!  b}$. 
By applying $\conj$ twice we obtain $b_k = (\conj^2 b)_k =  \overline{\phi_k} \phi_\tau(k) b_{\tau^2(k)}$, hence $\phi_k = \phi_{\tau(k)}$,
and 
\begin{equation}\label{e:usefulJ}
  \overline{\J} = \J^{-1} \qquad\text{and}\qquad  \J^T = \J,
\end{equation}
where we slightly abuse matrix notation for $\J$, and the overline denotes elementwise complex conjugation.
\end{remark}

Furthermore, it follows from the property~\eqref{e:alphaadd} of $\alpha_g$, linearity of $\beta_g$,
and the fact that $\beta_g$ permutes the lattice $\Z^3$, that the convolution product respects the symmetry:
\[
  \conv{ \gamma_g a \, \gamma_g b }=   \gamma_g \conv{  a b }.
\]
By combining this invariance with that of $\Dk$, 
see Remark~\ref{r:invariantnorm}, 
we conclude that $f$ is an unbounded map from $\Xsym$ to $\Xsym$,
see Section~\ref{s:fixedpoint} for a more precise statement.

\begin{theorem}\label{thm:solvePDE}
Let $\kappa \in \R^J_+$. Assume $b \in \X_0$ is such that $\conj b=b$ and
\[
  f_k(\sigma(m\e_0+b)) =0 \qquad\text{for all } k \in \ZZ_0.
\]
Then $u(\x)$ given by~\eqref{e:Fourier} with $c=\sigma(m\e_0+b)$
is a real-valued $\G$-invariant periodic solution of~\eqref{eq:OK}.
\end{theorem}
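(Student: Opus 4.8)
The plan is to verify three properties of $u(\x)$ defined via the Fourier series with $c = \sigma(m\e_0 + b)$: that it is $\G$-invariant, that it is real-valued, and that it solves~\eqref{eq:OK}. The $\G$-invariance is essentially immediate: by~\cite[Lemma~3.15]{vdBW3} we have $\sigma(m\e_0+b) \in \Xsym$, meaning $\gamma_g c = c$ for all $g \in \G$, and by the relation $u(C_g\x + D_g) = \sum_k (\gamma_g c)_k \exp(\textup{i}\L k\cdot\x)$ this says precisely $u(C_g\x + D_g) = u(\x)$. (Strictly one should remark that $D_g$ here plays the role of $2\pi(\L^{-1})^T D_g$ in the earlier normalization, but this is just bookkeeping.)

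For real-valuedness, the point is that $u$ is real iff $c_{-k} = \overline{c_k}$ for all $k \in \Z^3$, i.e. iff $\fullconj c = c$. So I would check that the hypothesis $\conj b = b$, together with $m \in \R$, forces $\fullconj \sigma(m\e_0 + b) = \sigma(m\e_0+b)$. This should follow from the definition $(\conj b)_k = (\fullconj \sigma(b))_k$ for $k \in \ZZ$ in Remark~\ref{r:extendconj}: knowing the conjugation-symmetry on the fundamental-domain indices $\ZZ$ should propagate to all of $\Z^3$ because both $\sigma(m\e_0+b)$ and $\fullconj\sigma(m\e_0+b)$ lie in $\Xsym$ (using that $\gamma_g$ and $\fullconj$ are compatible, via $\alpha_g(-k) = \overline{\alpha_g(k)}$ and $\beta_g(-k) = -\beta_g(k)$), and an element of $\Xsym$ is determined by its values on $\ZZ$ through the identification $\Xsym \cong \X$ in~\cite[Lemma~3.16]{vdBW3}. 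Thus agreement on $\ZZ$ upgrades to agreement everywhere.

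For the PDE, the strategy is to show that the Fourier-space equation $f_k(\kappa, c) = 0$ holds for \emph{all} $k \in \Z^3 \setminus \{0\}$, not merely for $k \in \ZZ_0$, and then invoke the fact that $f$ was constructed (Remark~\ref{r:dEdc} and~\eqref{e:fk}) as the Fourier transform of~\eqref{eq:OK}, so that vanishing of every $f_k$ is equivalent to $u$ solving~\eqref{eq:OK}. The extension from $\ZZ_0$ to all nonzero $k$ uses symmetry: by Remark~\ref{r:invariantnorm} (invariance of $\Dk$, hence of $\PP(\Dkk)$, under $\beta_g$) and the $\gamma_g$-equivariance of the convolution product, the map $f$ is $\gamma_g$-equivariant, i.e. $f(\gamma_g(\kappa,c)) = \gamma_g f(\kappa,c)$; since $\gamma_g c = c$, this gives $\gamma_g f(\kappa,c) = f(\kappa,c)$, so $f(\kappa,c) \in \Xsym$ as well. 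An element of $\Xsym$ that vanishes on $\ZZ_0$ vanishes on $\ZZ_0$'s full $\G$-orbit (because $(\gamma_g f)_k = \alpha_g(k) f_{\beta_g(k)}$ and $|\alpha_g(k)| = 1$), and since $\ZZ_0$ meets every nonzero orbit, $f_k(\kappa,c) = 0$ for all $k \neq 0$. One should also note $f_0$ need not be considered: the $k=0$ Fourier mode of~\eqref{eq:OK} is automatically satisfied because $-\Delta$ annihilates constants and the mass constraint $c_0 = m$ matches the $(u-m)$ term — this is exactly why $k=0$ was excluded from the zero-finding problem.

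The main obstacle is the bookkeeping in the second step: carefully establishing that conjugation-symmetry on $\ZZ$ implies it on all of $\Z^3$ for elements of $\Xsym$. This requires chasing how $\fullconj$ interacts with $\sigma$ and with the cocycle $\alpha_g$ and the orbit maps — concretely, one needs that $\fullconj$ maps $\Xsym$ into $\Xsym$ and commutes appropriately with $\sigma$, which rests on the identities $\alpha_g(-k)\overline{\alpha_g(k)} = 1$ and the already-noted $\phi_k = \phi_{\tau(k)}$, $\overline{\J} = \J^{-1}$ from Remark~\ref{r:explicitGamma}. The PDE step and the invariance step are comparatively routine once the equivariance properties of $f$, $\gamma_g$, and the convolution are in hand, all of which are stated or immediate from the excerpt.
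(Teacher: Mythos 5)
The paper's own ``proof'' of this statement is merely a citation to Lemma~3.19 of the companion paper \cite{vdBW3}, so there is no internal argument to compare against; your proposal reconstructs the content of that lemma. The three steps are all sound: $\G$-invariance is immediate from $\sigma(m\e_0+b)\in\Xsym$; real-valuedness follows from showing $\fullconj$ preserves $\Xsym$ (which rests on $\overline{\alpha_g(-k)}=\alpha_g(k)$ and $\beta_g(-k)=-\beta_g(k)$, both of which you correctly identify) and then using that elements of $\Xsym$ are determined by their restriction to $\ZZ$; and the PDE follows from the $\gamma_g$-equivariance of $f$, the vanishing of $\gamma_g$-invariant sequences off $\Zsym$, and the automatic cancellation at $k=0$ from $c_0=m$. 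One could quibble that the display of~\eqref{e:defgamma} in Section~\ref{s:groupactions} already dropped the normalization factor $2\pi(\L^{-1})^T$ on $D_g$ that you correctly flag as a notational mismatch, but that is indeed just bookkeeping.
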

\begin{proof}
This is the content of Lemma~3.19 in \cite{vdBW3}.
\end{proof}

It will turn out to be useful to derive some additional properties of $\Xsym$.
We start with two lemmas that relate the symmetries to expressions such as the one for $h_j$.
\begin{lemma}\label{l:symquad}
Let $q : \Z^3 \to \C$ be such that $q(\gacts)=q(k)$ for all $g \in \G$,
and  $\sum_{k \in \Z^3} |q(k)|\nu^{-2\knorm} < \infty$.
Let $c, c' \in \Xsym$, then
\[
  \sum_{k \in \Z^3} q(k) c_k c'_{-k} =  \sum_{k \in \ZZ} q(k) |\Gacts| c_k  c'_{-k} .
\] 
\end{lemma}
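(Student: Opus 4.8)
The plan is to prove the identity by first observing that the sum on the left-hand side can be reorganized according to the orbit structure of the $\beta_g$-action on $\Z^3$. Concretely, I would partition $\Z^3$ into orbits $\Gacts$ for $k$ ranging over the fundamental domain $\Zdom$, so that
\[
  \sum_{k \in \Z^3} q(k) c_k c'_{-k} = \sum_{k \in \Zdom} \sum_{k' \in \Gacts} q(k') c_{k'} c'_{-k'}.
\]
The convergence needed to rearrange freely follows from the hypothesis $\sum_{k} |q(k)| \nu^{-2\knorm} < \infty$ together with the bound~\eqref{e:pointwise}, which gives $|c_{k'}| |c'_{-k'}| \leq \|c\|_\nu \|c'\|_\nu \omega_{k'}^{-1} \omega_{-k'}^{-1}$; since $\knorm$ is $\beta_g$-invariant (Remark~\ref{r:invariantnorm}) and $\omega_{k'} \geq \nu^{\knorm[k']}$, this is summable. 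On each orbit, $q(k')=q(k)$ by the assumed $\beta_g$-invariance of $q$, so $q$ factors out of the inner sum.

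Next I would evaluate the inner sum $\sum_{k' \in \Gacts} c_{k'} c'_{-k'}$ using the explicit description of symmetric sequences via $\sigma$. For $c \in \Xsym$ we have, after identifying $c$ with its symmetry-reduced representative $b$ (so $c = \sigma(b)$), the formula $c_{k'} = \talpha(k,k') b_k$ for $k' \in \Gacts$ and $k \in \Zdom$ the orbit representative, from the definition~\eqref{e:defsigma} of $\sigma$. Similarly $c'_{-k'} = \talpha(k,-k') b'_k$, using that $-k' \in \Gacts[-k] = -\Gacts$; here one must be a little careful that the orbit of $-k$ is $-\Gacts$ and that $c'$ restricted to this orbit is governed by the representative $b'_{\Phi(-k)}$ rather than $b'_k$. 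The key algebraic fact is that $\talpha(k,k')\talpha(-k,-k') = 1$ — this is the cocycle-type relation coming from $\alpha_g(-k) = \overline{\alpha_g(k)}$ (equivalently $\alpha_g(k)\alpha_g(-k)=\alpha_g(0)=1$ by~\eqref{e:alphaadd}) — so that the $\talpha$ factors cancel in the product $c_{k'}c'_{-k'}$ on each orbit point, leaving $c_{k'}c'_{-k'} = c_k c'_{-k}$ for every $k' \in \Gacts$. Hence $\sum_{k' \in \Gacts} c_{k'}c'_{-k'} = |\Gacts|\, c_k c'_{-k}$.

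Combining the two steps gives
\[
  \sum_{k \in \Z^3} q(k) c_k c'_{-k} = \sum_{k \in \Zdom} q(k)\, |\Gacts|\, c_k c'_{-k},
\]
and restricting to $k \in \ZZ = \Zsym \cap \Zdom$ is harmless because $c_k = 0$ for $k \notin \Zsym$ by~\eqref{e:xsymzsym}, which yields exactly the claimed identity. The main obstacle I anticipate is the careful bookkeeping in step two: correctly tracking which orbit representative controls $c'_{-k'}$ (the subtlety that $-\Gacts = \Gacts[-k]$, handled via the permutation $\tau$ and $\Phi$ of Remark~\ref{r:explicitGamma}), and verifying the cancellation $\talpha(k,k')\talpha(-k,-k')=1$ cleanly — this amounts to checking that the phase factors $\alpha_g$ behave multiplicatively under $k \mapsto -k$, which follows from~\eqref{e:alphaadd} but should be stated explicitly. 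Everything else is routine reindexing and an absolute-convergence check.
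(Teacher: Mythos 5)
Your proof is correct and follows essentially the same route as the paper's: decompose the sum over $\Z^3$ into group orbits, use the $\beta_g$-invariance of $q$, and cancel the phase factors via $\alpha_g(k)\alpha_g(-k)=\alpha_g(0)=1$ (your $\talpha(k,k')\talpha(-k,-k')=1$ is exactly this fact), with the restriction to $\ZZ$ handled by~\eqref{e:xsymzsym}. The only cosmetic difference is that you sum directly over the orbit elements $k'\in\Gacts$ while the paper counts via the orbit-stabilizer theorem as $\frac{1}{|\G_k|}\sum_{g\in\G}$, which yields the same factor $|\Gacts|$.
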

\begin{proof}
The growth condition guarantees that the series on the left converges.
By using~\eqref{e:xsymzsym} and the orbit-stabilizer theorem we write
\begin{equation}\label{e:qcc}
	\sum_{k \in \Z^3} q(k) c_k c'_{-k}
 =
	\sum_{k \in \Zsym} q(k) c_k c'_{-k}
=  
	\sum_{k \in \ZZ} \frac{1}{|\G_k|} \sum_{g \in \G} q(\gacts) c_{\gacts} c'_{-\gacts} .
\end{equation}
By using~\eqref{e:defgamma}, the assumption that $c$ and $c'$ are $\gamma_g$-invariant, and property~\eqref{e:alphaadd} of~$\alpha_g$, we infer that
\[
c_{\gacts} c'_{-\gacts} = \alpha^{-1}_g(k) \alpha^{-1}_g(-k) (\gamma_g c)_k (\gamma_g c')_{-k}  = c_k c'_{-k}.
\]
Since it is assumed that  $q(\gacts)=q(k)$ for all $g\in \G$, the argument of the inner sum in the righthand side of~\eqref{e:qcc} is thus independent of $g$.
Hence 
\[
	\sum_{k \in \Z^3} q(k) c_k c'_{-k} =
	\sum_{k \in \ZZ} \frac{|\G|}{|\G_k|} q(k) c_{k} c'_{-k}
	= \sum_{k \in \ZZ} |\Gacts| q(k) c_{k} c'_{-k},
\] 
where in the final step we have applied the orbit-stabilizer formula.
\end{proof}

\begin{lemma}\label{l:symdiff}
Let $q : \Z^3 \to \C$ be such that $q(\gacts)=q(k)$ for all $g \in \G$,
and  $\sum_{k \in \Z^3} |q(k)|\nu^{-2\knorm} < \infty$.
Assume that  $q(-k)=q(k)$ for all $k\in \Z^3$. 
Let $\mathcal{H}$ be the functional on $\X$ given by
\[
  \mathcal{H}(b)= \frac{1}{2} \sum_{k \in \Z^3} q(k) \sigma(b)_k \sigma(b)_{-k}.
\]
Then for any $k' \in \ZZ$
\[
  D_{b_{k'}} \mathcal{H}(b) =  q(k') |\Gacts[k']|  \sigma(b)_{-k'}.
\]
\end{lemma}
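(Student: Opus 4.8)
The plan is to differentiate $\mathcal{H}(b)=\frac12\sum_{k\in\Z^3}q(k)\sigma(b)_k\sigma(b)_{-k}$ with respect to a single coordinate $b_{k'}$, $k'\in\ZZ$, while tracking how $\sigma(b)_k$ depends on $b_{k'}$. The first step is to rewrite $\mathcal{H}$ using Lemma~\ref{l:symquad}: since $q$ is $\G$-invariant and satisfies the summability bound, and $\sigma(b)\in\Xsym$, we get $\mathcal{H}(b)=\frac12\sum_{k\in\ZZ}|\Gacts|\,q(k)\,\sigma(b)_k\sigma(b)_{-k}$. Now for each $k\in\ZZ$, using $\sigma(b)_k=b_k$ and the explicit formula $\sigma(b)_{-k}=\phi_k b_{\tau(k)}$ from Remark~\ref{r:explicitGamma}, each summand becomes $|\Gacts|\,q(k)\,\phi_k\,b_k\,b_{\tau(k)}$, a quadratic expression in the finitely (or countably) many variables $\{b_k\}_{k\in\ZZ}$ whose only dependence on $b_{k'}$ is through the factors $b_k$ with $k=k'$ and $b_{\tau(k)}$ with $\tau(k)=k'$, i.e. $k=\tau(k')$ (using $\tau=\tau^{-1}$).

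The second step is the actual differentiation. From the $k=k'$ term we pick up $\frac12|\Gacts[k']|\,q(k')\,\phi_{k'}\,b_{\tau(k')}$, and from the $k=\tau(k')$ term (using $|\Gacts[\tau(k')]|=|\Gacts[k']|$ since $\tau(k')\in\Gacts[-k']$ has the same orbit size, and $q(\tau(k'))=q(-k')=q(k')$ by the hypothesis $q(-k)=q(k)$ combined with $\G$-invariance) we pick up $\frac12|\Gacts[k']|\,q(k')\,\phi_{\tau(k')}\,b_{\tau^2(k')}=\frac12|\Gacts[k']|\,q(k')\,\phi_{k'}\,b_{\tau(k')}$, where we used $\phi_{\tau(k')}=\phi_{k'}$ and $\tau^2=\mathrm{id}$. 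Adding the two contributions gives $|\Gacts[k']|\,q(k')\,\phi_{k'}\,b_{\tau(k')}=q(k')\,|\Gacts[k']|\,\sigma(b)_{-k'}$, which is the claimed identity. One minor point to handle carefully is the case $k'=\tau(k')$ (a fixed point of $\tau$, which forces $\phi_{k'}^2=1$): there the summand is $|\Gacts[k']|\,q(k')\,\phi_{k'}\,b_{k'}^2$ and its derivative is $2|\Gacts[k']|\,q(k')\,\phi_{k'}\,b_{k'}$, which again equals $q(k')\,|\Gacts[k']|\,\sigma(b)_{-k'}$ since $\sigma(b)_{-k'}=\phi_{k'}b_{k'}$ in that case; so the formula is uniform.

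The main obstacle, such as it is, is bookkeeping rather than conceptual: one must be sure that the rewriting via Lemma~\ref{l:symquad} is legitimate as an identity of functionals on all of $\X$ (not just on the symmetric subspace), and that term-by-term differentiation of the series is justified. The former holds because $\sigma(b)\in\Xsym$ for every $b\in\X$ by \cite[Lemma~3.15]{vdBW3}, so Lemma~\ref{l:symquad} applies pointwise in $b$. The latter follows from the summability hypothesis $\sum_k|q(k)|\nu^{-2\knorm}<\infty$ together with the Banach-algebra bound~\eqref{e:BA} and $\|\sigma(b)\|_\nu=\|b\|_\X$ from~\eqref{e:compatiblenorms}: these give that $b\mapsto\mathcal{H}(b)$ is a bounded quadratic form, hence smooth, with Fréchet derivative computed coordinatewise as above. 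I would also remark that the result is consistent with Remark~\ref{r:dEdc}: applied with $q(k)=P(\Dkk)$ (for fixed $\kappa$) and with $q(k)=\tfrac12 P'(\Dkk)\Dk^j$ it recovers the relation between $f$, $h_j$ and the partial derivatives of $E$ in the symmetry-reduced coordinates.
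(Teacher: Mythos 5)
Your proof is correct, but it takes a genuinely different route from the paper's.

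The paper starts from the full sum over $\Z^3$, applies the product rule, uses $q(-k)=q(k)$ and the change of variable $k\mapsto -k$ to merge the two resulting sums, then uses linearity of $\sigma$ (so that $D_{b_{k'}}\sigma(b)=\sigma(\e_{k'})$) and only at the very end invokes Lemma~\ref{l:symquad} to collapse the $\Z^3$-sum to the $\ZZ$-sum. You do the opposite: you first reduce $\mathcal{H}$ to a sum over $\ZZ$ via Lemma~\ref{l:symquad}, then differentiate the reduced expression term by term by making the dependence on $b_{k'}$ fully explicit through the $\sigma(b)_{-k}=\phi_k b_{\tau(k)}$ representation of Remark~\ref{r:explicitGamma}. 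Both routes are valid. The paper's version is arguably cleaner: it never touches $\tau$, $\phi$ or orbit-size comparisons, and it treats all $k'$ uniformly — no case split. Your version is more concrete (the derivative drops out by inspection from a finite quadratic expression in the $b_k$'s), but it requires you to verify the auxiliary identities $|\Gacts[\tau(k')]|=|\Gacts[k']|$, $q(\tau(k'))=q(k')$, $\phi_{\tau(k')}=\phi_{k'}$, $\tau^2=\mathrm{id}$, and to handle the fixed point $\tau(k')=k'$ separately, all of which you do correctly (and the factor-of-$2$ bookkeeping in the fixed-point case does come out right once the overall $\tfrac12$ is restored). Your closing observations about Fréchet differentiability and the connection to Remark~\ref{r:dEdc} are accurate and make the lemma's role more transparent, though the paper does not spell them out here.
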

\begin{proof}
By the product rule 
\[
 D_{b_{k'}} \mathcal{H}(b) =
	 \frac{1}{2} \sum_{k \in \Z^3} q(k)  [ D_{b_{k'}} \sigma(b)_k] \sigma(b)_{-k}
	 +  \frac{1}{2} \sum_{k \in \Z^3} q(k)  \sigma(b)_k  [  D_{b_{k'}} \sigma(b)_{-k} ]
\]
Replacing $k$ by $-k$ in the second sum, and using that $q(-k)=q(k)$, this reduces to
\[
 D_{b_{k'}} \mathcal{H}(b) =
 \sum_{k \in \Z^3} q(k)  [D_{b_{k'}} \sigma(b)_k ] \sigma(b)_{-k}
\]
Since $\sigma$ is a linear map, $D_{b_{k'}} \sigma(b) = \sigma(\e_{k'})$
for any $k' \in \ZZ$.
Hence
\[
 D_{b_{k'}} \mathcal{H}(b) =
 \sum_{k \in \Z^3} q(k)  \sigma(\e_{k'})_k  \sigma(b)_{-k}  .
\]
We then apply Lemma~\ref{l:symquad} to this expression to obtain
\[
  D_{b_{k'}} \mathcal{H}(b) = 
  \sum_{k \in \ZZ} q(k) |\Gacts| (\e_{k'})_k  \sigma(b)_{-k} 
  = 
  q(k') |\Gacts[k']|  \sigma(b)_{-k'}. \qedhere
\]
\end{proof}

\begin{remark}
By putting $q(0)=0$ the results in Lemmas~\ref{l:symquad} and~\ref{l:symdiff} also hold when we replace $\Z^3$ and $\ZZ$ 
by $\Z^3 \setminus \{0\}$ and $\ZZ_0$, respectively.
\end{remark}

In a similar way we can deal with derivatives of powers.
\begin{lemma}\label{l:symdiffpower}
Let $p \in \mathbb{N}$, then 
\[
  D_{b_{k'}} \langle \sigma(b)^p \rangle_0 =   |\Gacts[k']| \, p \langle\sigma(b)^{p-1} \rangle_{-k'}.
\]
\end{lemma}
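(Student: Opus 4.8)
The plan is to mimic the structure of the proof of Lemma~\ref{l:symdiff}, replacing the quadratic form by the quartic-type expression $\langle \sigma(b)^p \rangle_0$. First I would expand $\langle \sigma(b)^p\rangle_0 = \sum_{k_1+\cdots+k_p=0}\sigma(b)_{k_1}\cdots\sigma(b)_{k_p}$ and apply the product rule, differentiating with respect to $b_{k'}$. By the symmetry of the expression under permuting the $p$ factors, the $p$ resulting terms are equal, so $D_{b_{k'}}\langle\sigma(b)^p\rangle_0 = p\sum_{k_1+\cdots+k_p=0}[D_{b_{k'}}\sigma(b)_{k_1}]\,\sigma(b)_{k_2}\cdots\sigma(b)_{k_p}$. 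Since $\sigma$ is linear, $D_{b_{k'}}\sigma(b)=\sigma(\e_{k'})$, so this equals $p\sum_{\ell\in\Z^3}\sigma(\e_{k'})_\ell\,\langle\sigma(b)^{p-1}\rangle_{-\ell}$, recognizing the convolution of the remaining $p-1$ factors evaluated at $-\ell$.

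Next I would identify $q(\ell)\bydef \langle\sigma(b)^{p-1}\rangle_{-\ell}$ and check that it satisfies the hypotheses of Lemma~\ref{l:symquad} applied with one of the two sequences equal to $\sigma(\e_{k'})$ and the relevant ``$c'$'' slot carrying the convolution; more precisely, I would want to write the sum as $\sum_\ell q(\ell)\,\sigma(\e_{k'})_\ell \, d_{-\ell}$ for a suitable symmetric $d$. The cleanest route is actually to invoke Lemma~\ref{l:symquad} directly with $c=\sigma(\e_{k'})$, $c'=\langle\sigma(b)^{p-1}\rangle$ (which lies in $\Xsym$ by the convolution-invariance $\langle\gamma_g a\,\gamma_g b\rangle=\gamma_g\langle a\,b\rangle$ recorded just before Theorem~\ref{thm:solvePDE}, extended inductively to $p-1$ factors), and $q\equiv 1$, which trivially satisfies $q(\gacts)=q(k)$ and the growth condition. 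This yields $\sum_{\ell\in\Z^3}\sigma(\e_{k'})_\ell\,\langle\sigma(b)^{p-1}\rangle_{-\ell} = \sum_{\ell\in\ZZ}|\Gacts[\ell]|\,\sigma(\e_{k'})_\ell\,\langle\sigma(b)^{p-1}\rangle_{-\ell}$, and since $\sigma(\e_{k'})_\ell = (\e_{k'})_\ell = \delta_{k',\ell}$ for $\ell\in\ZZ$ (using $\sigma(b)_k=b_k$ for $k\in\ZZ$), only the term $\ell=k'$ survives, giving $|\Gacts[k']|\,\langle\sigma(b)^{p-1}\rangle_{-k'}$. Multiplying by the factor $p$ pulled out earlier completes the proof.

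The main obstacle, and the only point requiring genuine care, is verifying that $\langle\sigma(b)^{p-1}\rangle \in \Xsym$ so that Lemma~\ref{l:symquad} legitimately applies, and that the summability condition $\sum|q(k)|\nu^{-2\knorm}<\infty$ holds for $q\equiv 1$ times the convolution factor; the former follows from $\gamma_g$-equivariance of convolution together with $\gamma_g$-invariance of $\|\cdot\|_\nu$ (Remark~\ref{r:invariantnorm}) and the Banach-algebra estimate~\eqref{e:BA}, while the latter is immediate since $\nu\le 1$ (or, if $\nu>1$ is permitted, one notes $\sigma(\e_{k'})$ has finite support so the outer sum over $\ell$ is effectively finite and no growth condition on $q$ is needed — one can restrict to a finite-support version of Lemma~\ref{l:symquad} or argue directly). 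A secondary bookkeeping point is the combinatorial identification of the $p$ terms from the product rule as all equal; this is immediate from the symmetry of $\langle\,\cdot\,\rangle_0$ under relabeling the summation indices $k_1,\dots,k_p$, but should be stated. Once these are in place the computation is entirely routine, paralleling Lemma~\ref{l:symdiff} line by line.

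\begin{proof}
Write $\langle \sigma(b)^p \rangle_0 = \sum_{k_1+\cdots+k_p=0} \sigma(b)_{k_1}\cdots\sigma(b)_{k_p}$. Applying the product rule and using that the summand is symmetric under permutations of $k_1,\dots,k_p$, the $p$ terms obtained are equal, hence
\[
  D_{b_{k'}} \langle \sigma(b)^p \rangle_0 = p \sum_{k_1+\cdots+k_p=0} [D_{b_{k'}}\sigma(b)_{k_1}]\, \sigma(b)_{k_2}\cdots\sigma(b)_{k_p}.
\]
Since $\sigma$ is linear, $D_{b_{k'}}\sigma(b) = \sigma(\e_{k'})$, and collecting the remaining $p-1$ factors into a convolution we obtain
\[
  D_{b_{k'}} \langle \sigma(b)^p \rangle_0 = p \sum_{\ell \in \Z^3} \sigma(\e_{k'})_\ell \, \langle \sigma(b)^{p-1} \rangle_{-\ell}.
\]
The convolution product is $\gamma_g$-equivariant, $\langle \gamma_g a \, \gamma_g b \rangle = \gamma_g \langle a\, b\rangle$, so inductively $\langle \sigma(b)^{p-1}\rangle \in \Xsym$; together with $\sigma(\e_{k'}) \in \Xsym$ this allows us to apply Lemma~\ref{l:symquad} with $q \equiv 1$, giving
\[
  \sum_{\ell \in \Z^3} \sigma(\e_{k'})_\ell \, \langle \sigma(b)^{p-1} \rangle_{-\ell}
   = \sum_{\ell \in \ZZ} |\Gacts[\ell]| \, \sigma(\e_{k'})_\ell \, \langle \sigma(b)^{p-1} \rangle_{-\ell}.
\]
For $\ell \in \ZZ$ we have $\sigma(\e_{k'})_\ell = (\e_{k'})_\ell = \delta_{k',\ell}$, so only the term $\ell = k'$ survives, yielding $|\Gacts[k']| \, \langle \sigma(b)^{p-1} \rangle_{-k'}$. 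Multiplying by $p$ completes the proof.
\end{proof}
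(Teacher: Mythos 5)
Your proof is correct and takes essentially the same approach as the paper's (much terser) argument: product rule plus symmetry, linearity of $\sigma$, then Lemma~\ref{l:symquad} to reduce the sum to $\ZZ$ and collapse it using $(\e_{k'})_\ell=\delta_{k',\ell}$. You fill in the supporting facts ($\langle\sigma(b)^{p-1}\rangle\in\Xsym$, applicability of Lemma~\ref{l:symquad} with $q\equiv 1$) which the paper leaves implicit; one small slip in your exploratory remarks is the phrase ``since $\nu\le 1$'' (the paper has $\nu>1$), but your fallback observation that $\sigma(\e_{k'})$ has finite support makes the convergence point moot, and the final proof text is clean.
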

\begin{proof}
By the chain rule and arguments similar to those in the proof of Lemma~\ref{l:symdiff} we infer
\begin{equation*}
	D_{b_{k'}} \langle \sigma(b)^N \rangle_0 =
	p \sum_{k\in \Z^3} \langle \sigma(b)^{p-1} \rangle_{-k} \sigma(\e_{k'})_k 
	= p \sum_{k\in \Z} |\Gacts| \langle \sigma(b)^{p-1} \rangle_{-k} (\e_{k'})_k =  p \, |\Gacts[k']| \langle \sigma(b)^{p-1} \rangle_{-k'}  ,	
\end{equation*}	
where we have used Lemma~\ref{l:symquad} to obtain the second identity.
\end{proof}

The next lemma will be used to control the derivative of a symmetric convolution product.
\begin{lemma}\label{l:shiftestimate}
Let $c \in \Xsym$. Then
\[
   \sum_{k' \in \ZZ} \frac{\omega_{k'}}{\omega_{k}}  \sum_{k'' \in \Gacts}  |c_{k'-k''} |   \leq \|c\|_\nu 
   \qquad \text{for all } k \in \ZZ.
\]
\end{lemma}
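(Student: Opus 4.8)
The plan is to estimate the double sum by recognizing the inner sum over $k'' \in \Gacts$ as a shifted tail of the sequence $c$, controlling the weight ratio $\omega_{k'}/\omega_k$ via the orbit count and the norm $\knorm$. First I would unfold the definition $\omega_{k'} = |\Gacts[k']|\,\nu^{\knorm[k']}$ and $\omega_k = |\Gacts|\,\nu^{\knorm}$, so that the left-hand side becomes
\[
  \sum_{k'\in\ZZ} \frac{|\Gacts[k']|}{|\Gacts|}\, \nu^{\knorm[k']-\knorm} \sum_{k''\in\Gacts} |c_{k'-k''}|.
\]
The key observation is that for each $k' \in \ZZ$ and each $k'' \in \Gacts$ one has $\knorm[k''] = \knorm$ (since $\knorm$ is $\beta_g$-invariant, Remark~\ref{r:invariantnorm}, and $k''$ lies in the orbit of $k$), and the triangle inequality for the norm $|\cdot|_{\bkappa}$ on $\Z^3$ gives $\knorm[k'] \leq \knorm[k'-k''] + \knorm[k''] = \knorm[k'-k''] + \knorm$. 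Hence $\nu^{\knorm[k']-\knorm} \leq \nu^{\knorm[k'-k'']}$ for $\nu \geq 1$ (which is the relevant regime), absorbing the $k'$-dependent weight against the shift.

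Next I would deal with the orbit-count factor $|\Gacts[k']|/|\Gacts|$ together with the sum over the fundamental-domain indices $k'\in\ZZ$. The natural move is to undo the symmetry reduction: using the orbit--stabilizer bookkeeping (as in the proof of Lemma~\ref{l:symquad}), a sum $\sum_{k'\in\ZZ} |\Gacts[k']|\, (\cdots)$ over a quantity that is constant on orbits equals $\sum_{k'\in\Zsym}(\cdots)$, and since the summand $\sum_{k''\in\Gacts}|c_{k'-k''}|$ is manifestly invariant under replacing $k'$ by an element of its orbit (because that just permutes, up to relabeling, the difference indices) one can pass from $\ZZ$ to all of $\Z^3$ at the cost of dividing by nothing extra — the factor $|\Gacts[k']|$ is exactly what converts the sum over representatives into a sum over all lattice points. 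After this step the double sum collapses to something of the shape $\frac{1}{|\Gacts|}\sum_{k''\in\Gacts}\sum_{\ell\in\Z^3} |c_\ell|\,\nu^{\knorm[\ell]}$ (after the substitution $\ell = k'-k''$ and using $\knorm[k'']=\knorm$), and each inner sum over $\ell$ is precisely $\|c\|_\nu$. Since there are $|\Gacts|$ terms $k''$, the prefactor $\frac{1}{|\Gacts|}$ cancels them and we are left with $\|c\|_\nu$.

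The main obstacle I anticipate is making the interchange of summations and the ``undoing'' of the symmetry reduction fully rigorous: one must check that Fubini applies (absolute convergence, which follows from $c\in\Xsym$ so $\|c\|_\nu<\infty$ and $\nu\geq 1$), and one must be careful that when $k'$ ranges over $\ZZ = \Zsym\cap\Zdom$ rather than all of $\Z^3$, the combinatorial factor $|\Gacts[k']|$ correctly accounts for multiplicity — in particular one should verify that $c_{k'-k''}$ can be nonzero only when $k'\in\Zsym$ is irrelevant since we are bounding, not computing exactly, so the cleanest route may be to simply drop the restriction to $\Zsym$ and bound $\sum_{k'\in\ZZ}|\Gacts[k']|(\cdots) \leq \sum_{k'\in\Z^3}(\cdots)$ directly once the weight has been shifted. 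The rest is a routine reindexing.
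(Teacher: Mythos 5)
Your proposal is correct and reaches the same bound as the paper, but via a genuinely different route. The paper first rewrites the inner sum using the $\gamma_g$-equivariance of $c$ (so that $|c_{k'-\gacts}|=|c_{\ggacts{g^{-1}}{k'}-k}|$), then applies the orbit--stabilizer identity to pass from $\ZZ$ to $\Zsym$, and \emph{only then} invokes the triangle inequality with a fixed shift by $k$, i.e.\ $\knorm[\tilde k'] \le \knorm[\tilde k'-k]+\knorm$. You instead apply the triangle inequality at the outset with a $k''$-dependent shift, $\knorm[k'] \le \knorm[k'-k'']+\knorm[k'']$ together with $\knorm[k'']=\knorm$, observe that the resulting summand $\sum_{k''\in\Gacts}|c_{k'-k''}|\nu^{\knorm[k'-k'']}$ is constant on $\G$-orbits of $k'$, unfold the factor $|\Gacts[k']|$ to pass from $\ZZ$ to $\Z^3$, and then close with Fubini and the substitution $\ell = k'-k''$. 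Both approaches use the same three ingredients (triangle inequality for $\knorm$, orbit--stabilizer, properties of $c\in\Xsym$), but yours avoids the explicit manipulation of $\alpha_g$ and $\beta_g$ in the paper at the price of a Fubini interchange and a reindexing, which you correctly flag as needing absolute convergence.

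One small imprecision worth fixing: you call the orbit-invariance of $\sum_{k''\in\Gacts}|c_{k'-k''}|\nu^{\knorm[k'-k'']}$ ``manifest'' on the grounds that replacing $k'$ by $\gacts[k']$ ``just permutes, up to relabeling, the difference indices.'' After the relabeling $k''\mapsto \gacts[k'']$ the differences become $\gacts[k']-\gacts[k''] = \beta_g(k'-k'')$, which are \emph{not} the same indices; the invariance actually rests on $|c_{\beta_g(\ell)}|=|c_\ell|$ (which holds because $c\in\Xsym$ and $|\alpha_g|\equiv 1$) together with the $\beta_g$-invariance of $\knorm$ from Remark~\ref{r:invariantnorm}. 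Once you make this explicit the argument is complete. You should also record that the orbits $\Gacts[k']$ for distinct $k'\in\ZZ\subset\Zdom$ are disjoint, so that $\sum_{k'\in\ZZ}|\Gacts[k']|\,s(k')=\sum_{k'\in\ZZ}\sum_{k''\in\Gacts[k']}s(k'')\le\sum_{k''\in\Z^3}s(k'')$ for nonnegative orbit-invariant $s$; this makes the passage to $\Z^3$ airtight without even needing to identify the union with $\Zsym$.
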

\begin{proof}
Let $k \in \ZZ$. 
We recall that 
\[
  \omega_k = |\Gacts| \nu^{\knorm} = \frac{|\G|}{|\G_k|} \nu^{\knorm}.
\]
Using that $\knorm$ is $\beta_g$-invariant (see~\eqref{e:Deltajgk}), as well as the property  $|c_{\gacts}|=|c_k|$
for $c\in\Xsym$ (since $|\alpha_g(k)|=1$), we obtain 
\begin{alignat*}{1}
	 \sum_{k' \in \ZZ} \frac{\omega_{k'}}{\omega_{k}}  \sum_{k'' \in \Gacts}  |c_{k'-k''} | 
	 & =
	 \sum_{k' \in \ZZ}  \frac{|\G_{k}|}{|\G_{k'}|}  \sum_{k'' \in \Gacts}  |c_{k'-k''}|  \nu^{\knorm[k']-\knorm} \\
	 &=  \sum_{k' \in \ZZ}  \frac{1}{|\G_{k'}|}  \sum_{g \in \G}  \left|c_{k'-\gacts}\right|  \nu^{\knorm[k']-\knorm} \\
	 &= \sum_{k' \in \ZZ} \frac{1}{|\G_{k'}|}  \sum_{g \in \G}  \left|  c_{\ggacts{g^{-1}}{k'}-k} \right|  \nu^{\knorm[k']-\knorm} \\
	 &= \sum_{k' \in \ZZ} \frac{1}{|\G_{k'}|}  \sum_{g^{-1} \in \G}  \left|  c_{\ggacts{g^{-1}}{k'}-k} \right|  \nu^{\knorm[\ggacts{g^{-1}}{k'}]-\knorm} \\
	 &= \sum_{\tilde{k}' \in \Zsym} \left|  c_{\tilde{k}'-k} \right| 
	 \nu^{\knorm[\tilde{k}']-\knorm} \\
	 & \leq \sum_{\tilde{k}' \in \Zsym} \left|  c_{\tilde{k}'-k} \right| 
	 \nu^{\knorm[\tilde{k}'-k]}\\
	 &\leq  \sum_{\tilde{k} \in \Z^3} |  c_{\tilde{k}} | 
	 \, \nu^{\knorm[\tilde{k}]}\\
	 & = \| c\|_\nu,  
\end{alignat*}
where we have applied the orbit-stabilizer theorem twice,
and the first inequality is due to the reverse triangle inequality.
\end{proof}

\section{The fixed point problem}
\label{s:fixedpoint}

We write $\FJ_+ = \{ \kappa \in \FJ : \re \kappa_j >0 \}$
and define $\FF : \FJ_+ \times \X_0 \to \FJ \times \tX_0$ by
\begin{equation}\label{e:defFF}
\FF (\kappa,b) \bydef \bigl[H(\kappa,b),F(\kappa,b) \bigr],
 \bydef \bigl[ 
(h_j(\kappa,\sigma(m\e_0+b)))_{1 \leq j \leq J} , 
(\tf_k(\kappa,\sigma(m\e_0+b)))_{k \in \ZZ_0} \bigr],
\end{equation}
where $\tf_k$ is a rescaled version of $f_k$:
\begin{equation}\label{e:tf}
  \tf_k(\kappa,c) \bydef |\Gacts| f_k(\kappa,c).
\end{equation}
As we will see later in Remark~\ref{r:symmetricHessian},
this premultiplication by the number of elements in the group orbit is quite natural, while it does not complicate the setup significantly.
Here the Fourier part of the codomain can be chosen to be
\[
  \tX_0 \bydef \Bigl\{ (b_k)_{k \in \ZZ_0} : \textstyle\sum_{k \in \ZZ_0} \min\{1,|P(\Dk 1)|\} \, |b_k| \, \omega_k < \infty \Bigr\}.
\]
Projection onto $\FJ$ is denoted by $\pi_\kappa$,
whereas the projections on $\X_0$ and $\tX_0$ are both denoted by~$\pi_b$. 

At times it is convenient to rewrite $H$, by using Lemma~\ref{l:symquad}, as
\begin{subequations}
\begin{alignat}{1}
	H_j(\kappa,b) & = 
	\frac{1}{2} \sum_{k \in \Z^3 \setminus \{0\} } \PP'(\Dkk)  \, \Dk^j \, (\sigma(m\e_0+b))_k (\sigma(m\e_0+b))_{-k}  \label{e:Hwrite} \\
    & = 
   	\frac{1}{2} \sum_{k \in \ZZ_0 }  \PP'(\Dkk)  \, \Dk^j \, |\Gacts| \, b_k 
	(\sigma(m\e_0+b))_{-k}  \label{e:Hrewrite}.
\end{alignat}
\end{subequations}
Furthermore, $F$ naturally splits into a linear (in $b$) and nonlinear part:
\[
  F_k(\kappa,b) = P(\Dk \kappa ) |\Gacts| b_k + |\Gacts| \Phi_k(b), \qquad \text{for any } k \in \ZZ_0,
\]
where
\begin{equation}\label{e:defPhi}
  \Phi(b) \bydef  \conv{(\sigma(m\e_0+b))^3}.
\end{equation}

To perform numerical computations, and to set up a Newton-like fixed point scheme, we choose a finite dimensional projection.
For $K >0$ we define the finite cut-off index sets 
\begin{alignat*}{1}
  \ZZ^K_0 & \bydef \{ k \in \ZZ_0^3 : \kk \leq K \}.
\end{alignat*}
We denote by $N=N(K)$ the number of elements in $\ZZ^K_0$.
We set
\begin{alignat*}{1}
  \XK & \bydef \{ b \in \X_0 : b_k =0 \text{ for all } k \notin \ZZ^K_0 \},\\
  \Xinfty &\bydef \{ b \in \X_0 : b_k =0 \text{ for all } k \in \ZZ^K_0 \}.
\end{alignat*}
The total space is now $ X= \FJ \times \XK \times \Xinfty$
with projections $\pi_\kappa$, $\pi_K$ and $\pi_\infty$.
To reduce clutter, we do not introduce notation for the natural inclusions of $\FJ$, $\XK$, $\Xinfty$ and $\X_0$ into the product space $\X$.
It should always be clear from the context whether a constituents or a subspaces is meant.
\begin{remark}\label{r:matrixmatrix}
For linear operators $M$ on $\FJ \times \XK$ we will use the notation
\begin{alignat*}{1}
  \pi_\kappa M &= M_{11} \pi_\kappa + M_{12} \pi_K ,\\
  \pi_K M &= M_{21} \pi_\kappa + M_{22} \pi_K,
\end{alignat*}
hence in matrix notation
\[
   M = \left( \begin{array}{cc} M_{11} & M_{12} \\ M_{21} & M_{22} \end{array} \right),
\]
with $M_{11}$ a $J\times J$ matrix, $M_{12}$ a $J \times N$ matrix, $M_{21}$ an $N \times J$ matrix, and $M_{22}$ an $N \times N$ matrix.
\end{remark}

We compute numerically an approximate zero $\bx=(\bkappa,\bb) \in \RJ_+ \times \X_0^K$
of 
\[
  (\pi_\kappa+\pi_K) \FF(x) =0.
\]
\begin{remark}
We will restrict attention to $(\bkappa,\bb)$ in the conjugate symmetric set $\Ssym$, replacing $\bb$ by $\frac{1}{2}(\bb+\conj \bb)$ if needed. Since $\bkappa$  naturally lies in $\R^J_+$, in fact we restrict attention to 
\[
  \Ssym_+ \bydef  \Ssym \cap (\FJ_+ \times \X_0)  = 
  \{ x=(\kappa,b) \in \Ssym : \kappa \in \RJ_+ \}.
\]
\end{remark}
We then compute numerically the Jacobian 
\[
  M = D \bigl[(\pi_\kappa+\pi_K) \, \FF \circ (\pi_\kappa+\pi_K)\bigr] (\bx)
\]
of the finitely truncated problem, evaluated at the numerical zero.
Finally, we invert the $(J+N)\times(J+N)$ matrix~$M$ numerically to obtain a matrix~$A$,
which may be decomposed into blocks as introduced in Remark~\ref{r:matrixmatrix}. 
We use this matrix~$A$ to set up the fixed point problem.
The tail part of the full Jacobian of $\FF$ will be approximated by
\begin{equation}\label{e:defLambda}
  (\Lambda b)_k \bydef \PP(\Dk \bkappa) |\Gacts| b_k 
  \qquad\text{for all } k \in \ZZ_0.
\end{equation}
In order to invert $\Lambda$ on $\X^\infty_0$, we assume that $K$ is sufficiently large, so that $\PP(\Dk \bkappa)$ has fixed sign for $\knorm > K$. In particular we choose 
\begin{equation}\label{e:KyP}
  K > \sqrt{\yP},
\end{equation}
so that, in view of the choice~\eqref{e:knorm} for the norm $\knorm$,
we have $\PP(\Dk \bkappa)>0$ for $\knorm > K$.
We then set
\begin{equation*}
  (\Lambda^{-1} b)_k \bydef
  \begin{cases} 
	     0 & \quad\text{for  } \knorm \leq K , \\
		 \frac{1}{\PP(\Dk \bkappa) |\Gacts|} b_k
   & \quad\text{for  }  \knorm > K .
   \end{cases}
\end{equation*}
The diagonal operators $\Lambda$ and $\Lambda^{-1}$ are inverses on $\X^\infty_0$.

To streamline the notation we define the diagonal operator 
\begin{equation}\label{e:orbitcount}
  (\orbitcount b)_k \bydef |\Gacts| b_k ,
\end{equation}
representing multiplication by the number of elements in the group orbit.
With this notation we may write
\begin{equation}\label{e:Fcondensed}
  F(\kappa,b) = \Lambda b + \orbitcount \Phi(b).
\end{equation}

With these preliminaries in place, and writing $x=(\kappa,b)$, we study the   (fixed point) operator 
\begin{equation}\label{e:defT}
  T(x) \bydef  x - \AA \FF(x) ,
\end{equation}
 with the linear operator $\AA$ defined through
\begin{subequations}
\label{e:defAA}
\begin{alignat}{1}
  (\pi_\kappa + \pi_K)  \AA x &=  A (\pi_\kappa + \pi_K)  x , \\
  \pi_\infty \AA x &= \Lambda^{-1} \pi_\infty x .
\end{alignat}
\end{subequations}
We note that $\AA$ is injective on $\CJ \times \tX_0$ whenever the matrix $A$ is invertible.

We work on a product neighbourhood (ball) 
\[
  \BB_{(r_1,r_2)}(x) = \{ (\kappa,b) : 
  |\kappa-\pi_\kappa x|_{\FJ} \leq r_1 , \|b - \pi_b x\|_{\sym} \leq  r_2 \} ,
\]
and we write $r=(r_1,r_2)$. 
\begin{remark}\label{r:kappapositive}
When $|\kappa-\pi_\kappa \bx|_{\FJ} <1$ then $|\kappa_j - \bkappa_j| < \bkappa_j$ for $1 \leq j \leq J$, which implies $\kappa \in \C^J_+$.
\end{remark}
We will mainly use the conjugate symmetric subset of the ball
\[
  \BBsym_{r}(x) \bydef \BB_r(x) \cap \Ssym = \{ \tilde{x} \in \BB_r(x)  : \conj \tilde{x} = \tilde{x} \}.
\]
The natural center of the ball will be either $0$ or a the numerically obtained approximate solution~$\bx$, which in practice we choose in $\Ssym$,
so that $\BBsym_{r}(\bx)$ is nonempty.
In Appendix~\ref{sa:estimates} we will establish estimates of the form
\begin{subequations}\label{e:tenbounds}
\begin{alignat}{2}
  | \pi_\kappa [T (\bx) -\bx] |_{\FJ} &\leq \YY{1} \label{e:boundY1}\\  
  \| \pi_b [T (\bx) -\bx] \|_{\X_0} &\leq \YY{2} \label{e:boundY2}\\
  \|  \pi_\kappa D_\kappa T (\bx)  \|_{B(\FJ,\FJ)}  &\leq \ZY{1}_1  \label{e:boundZ11}\\
  \|  \pi_\kappa D_b T (\bx)  \|_{B(\X_0,\FJ)}  &\leq \ZY{1}_2  \\
  \|  \pi_b D_\kappa T (\bx)  \|_{B(\FJ,\X_0)}  &\leq \ZY{2}_1  \\
  \|  \pi_b D_b T (\bx)  \|_{B(\X_0,\X_0)}  &\leq \ZY{2}_2   \\
  \| \pi_\kappa [D_\kappa T (x)  - D_\kappa T (\bx)] \|_{B(\FJ,\FJ)}  &\leq \WY{1}_{11}r_1 + \WY{1}_{12}r_2  &\qquad& 
  \text{for all } x \in \BBsym_r(\bx)   \label{e:boundW11} \\
  \|  \pi_\kappa [D_b T (x) -D_b T(\bx)]  \|_{B(\X_0,\FJ)}  &\leq \WY{1}_{21}r_1 + \WY{1}_{22}r_2   &\qquad&
  \text{for all } x \in \BBsym_r(\bx) \\
  \| \pi_b [D_\kappa T (x)  - D_\kappa T (\bx)] \|_{B(\FJ,\X_0)}  &\leq \WY{2}_{11}r_1 + \WY{2}_{12}r_2  &\qquad&
  \text{for all } x \in \BBsym_r(\bx) \\
  \| \pi_b [D_b T (x) - D_b T(\bx)]  \|_{B(\X_0,\X_0)}  &\leq 
  \WY{2}_{21}r_1 + \WY{2}_{22}r_2   &\qquad&
  \text{for all } x \in \BBsym_r(\bx)  \label{e:boundW22}. 
\end{alignat}
\end{subequations}
These estimates will be valid for $0 \leq r_1 \leq \rstar_1$
and $0 \leq r_1 \leq \rstar_2$, for some choice of $\rstar = (\rstar_1,\rstar_2) \in \R^2_+$.
Although this restriction on the $r$-values allows us to work with $W^{[i]}_{i',i''} \in \R_+$, the analysis goes through without change when $W^{[i]}_{i',i''}=W^{[i]}_{i',i''}(r)$ provided this dependence on $r$ is nondecreasing in $r_1$ and $r_2$ (which is not very restrictive).

In the following we will often write 
\[ 
 \WY{i}_{i'}(r) \bydef \WY{i}_{i'1} \, r_1+\WY{i}_{i'2} \, r_2 
           \qquad\text{for } i,i'=1,2.
\]
We define the radii polynomials 
\begin{subequations}\label{e:radpols12}
\begin{alignat}{1}
   p_1(r) &\bydef \YY{1} + r_1 [\ZY{1}_1 + \tfrac{1}{2} \WY{1}_1(r) - 1] + r_2 [\ZY{1}_2 + \tfrac{1}{2}\WY{1}_2(r) ] ,\\
   p_2(r) &\bydef \YY{2} + r_1 [\ZY{2}_1 + \tfrac{1}{2}\WY{2}_1(r)] + r_2 [\ZY{2}_2 + \tfrac{1}{2}\WY{2}_2(r) - 1] . 
\end{alignat}
\end{subequations}
Since the dependence of $\W^{[i]}_{i'}$ is polynomial, we call these \emph{radii polynomials}.
As we will see in the proof of Theorem~\ref{thm:contraction}, the operator $T$ maps $\BB_r$ into itself when $p_1(r)$ and $p_2(r)$ are both negative. 
Contractivity of $T$ on $\BB_r$ is controled by the $2\times 2$ matrix
\begin{equation}\label{e:defM}
  \M(r) \bydef \left[ 
   \begin{array}{cc}
	   \ZY{1}_1 + \WY{1}_1(r)  & \ZY{1}_2 + \WY{1}_2(r)  \\[1mm]
	   \ZY{2}_1 + \WY{2}_1(r)  & \ZY{2}_2 + \WY{2}_2(r)  
   \end{array}
   \right] .
\end{equation}
It will become apparent in the proof of Theorem~\ref{thm:contraction} that we need the dominant (Perron-Frobenius) eigenvalue 
of this positive matrix to be less than $1$.
We express this eigenvalue in terms of the trace and determinant of $\M(r)$:
\begin{equation}\label{e:tracedet}
  \sigma(r) \bydef \frac{\tr \M(r) + \sqrt{(\tr \M(r))^2 - 4 \det \M(r) }}{2}.
\end{equation}
The demand $\sigma(r)<1$ is equivalent to
requiring that the polynomials
\begin{subequations}\label{e:radpols34}
\begin{alignat}{1}
   \tp_3(r) &\bydef \tr \M(r) -2 ,\\
   \tp_4(r) &\bydef \tr \M(r) - \det \M(r)  - 1 ,
\end{alignat}
\end{subequations}
are negative.
Hence we aim to find $0<\hr \leq \rstar$, with inequalities interpreted component-wise, such that 
\begin{subequations}
\label{e:pleq0}
\begin{alignat}{2}
  p_1(\hr_1, \hr_2) &<0, \qquad& 
  p_2(\hr_1, \hr_2) &<0, \label{e:p12ll0}\\
  \tp_3(\hr_1, \hr_2) &<0, \qquad&
  \tp_4(\hr_1, \hr_2) &<0. \label{e:tp34ll0}
\end{alignat}
\end{subequations}

\begin{remark}\label{r:minimax}
For positive $2 \times 2$ matrices $\M$ one may express the demand that the dominant eigenvalue is less than $1$ in terms of the trace and the determinant. 
More generally, consider a positive $m \times m$ matrix $\M$.
The Collatz-Wielandt formula provides the minimax characterization
\begin{equation}\label{e:minimax}
  \sigma = \min_{y \in \R^m_+} \max_{1\leq i \leq m} \frac{(\M y)_i}{y_i}
\end{equation}
for the dominant eigenvalue.
In particular, when the dimension of the matrix is larger than $2$ and an explicit expression like~\eqref{e:tracedet} is not available, it is not difficult to obtain rigorous upper bounds on $\sigma(r)$ by choosing appropriate test vectors $y\in \R^m_+$.
\end{remark}

\begin{theorem}\label{thm:contraction}
Assume $\bx \in \Ssym_+$. 
Assume the matrix $A$ is invertible and $\conj A = A \conj$. 
Let $\rstar \in \R^2_+$ with $\rstar_1 < 1$.
Let $\YY{i} , \ZY{i}_{i'}, \WY{i}_{i'i''} \in \R_+$ satisfy the bounds~\eqref{e:tenbounds} for $i,i',i''=1,2$ and $0 \leq r \leq \rstar$. 
Let $p_1$, $p_2$ and $\tp_3$, $\tp_4$ be as defined in~\eqref{e:radpols12}
and ~\eqref{e:radpols34}, respectively.
If $\hr=(\hr_1,\hr_2)>0$ is  
such that $\hr \leq \rstar$
and inequalities~\eqref{e:pleq0} are satisfied, 
then
\begin{thmlist}
\item
$\FF$ has a unique zero $\hx=(\hkappa,\hb)$ in $\BBsym_{\hr}(\bx)$; 
\item 
$\hb$ corresponds to a $\G$-invariant real-valued periodic solution 
\[
  \hu(\x) = \sum_{k \in \Z^3} (\sigma(m\e_0+\hb))_k \exp(\textup{i} \L_{\hkappa} k \cdot \x)
\]
of the PDE~\eqref{eq:OK}; 
\item 
the domain of periodicity $\Omega_{\hkappa}$, given by~\eqref{e:omegakappa}, is
such that the energy is stationary with respect to variations in its
($\G$-symmetry respecting) proportions. 
\end{thmlist}
\end{theorem}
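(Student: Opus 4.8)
The plan is to obtain the theorem as a standard consequence of the Newton--Kantorovich / radii-polynomial framework set up in the excerpt. First I would verify that $T$ maps $\BBsym_{\hr}(\bx)$ into itself. The key point is that by construction $T(x)=x-\AA\FF(x)$, so $T(\bx)-\bx=-\AA\FF(\bx)$, and the bounds~\eqref{e:boundY1}--\eqref{e:boundY2} control $\pi_\kappa$ and $\pi_b$ of this quantity by $\YY{1},\YY{2}$. For an arbitrary $x\in\BBsym_{\hr}(\bx)$ I would write $T(x)-\bx = (T(\bx)-\bx) + \int_0^1 DT(\bx+t(x-\bx))(x-\bx)\,dt$ and split the derivative term as $DT(\bx)(x-\bx)$ plus the remainder $[DT(\cdot)-DT(\bx)](x-\bx)$. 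Applying the operator-norm bounds~\eqref{e:boundZ11}--\eqref{e:boundW22} componentwise in the product norm $(|\cdot|_{\FJ},\|\cdot\|_{\X_0})$, using that $\|\pi_\kappa(x-\bx)\|\le r_1$ and $\|\pi_b(x-\bx)\|\le r_2$ and that the $\W$-bounds are affine in $r$ with the stated monotonicity, gives
\[
  |\pi_\kappa[T(x)-\bx]|_{\FJ} \le \YY{1} + [\ZY{1}_1+\tfrac12\WY{1}_1(r)]r_1 + [\ZY{1}_2+\tfrac12\WY{1}_2(r)]r_2 = p_1(r)+r_1,
\]
and similarly $\|\pi_b[T(x)-\bx]\|_{\X_0}\le p_2(r)+r_2$; here the factor $\tfrac12$ comes from the $\int_0^1 t\,dt$ in the second-order Taylor remainder. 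Hence $p_1(\hr),p_2(\hr)<0$ forces $T(\BBsym_{\hr}(\bx))\subseteq \BBsym_{\hr}(\bx)$ — the $\Ssym$-invariance of $T$ following from $\conj\FF = \FF\conj$ (which is~\eqref{e:hfconjequivariant} together with the extension to $\X$) and the hypothesis $\conj A = A\conj$, so that $\AA$, and therefore $T$, commutes with $\conj$.

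Next I would prove contractivity. For $x,y\in\BBsym_{\hr}(\bx)$, again by the mean value inequality along the segment, $\|T(x)-T(y)\|\le \sup_{z}\|DT(z)\|\,\|x-y\|$ with the sup over the segment; writing $DT(z)=DT(\bx)+[DT(z)-DT(\bx)]$ and collecting the block bounds shows that the relevant Lipschitz constant is governed, in the product norm, by the positive $2\times2$ matrix $\M(\hr)$ of~\eqref{e:defM}: one gets $(|\pi_\kappa(T(x)-T(y))|_{\FJ},\|\pi_b(T(x)-T(y))\|_{\X_0}) \preceq \M(\hr)\,(|\pi_\kappa(x-y)|_{\FJ},\|\pi_b(x-y)\|_{\X_0})$ componentwise. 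By the Collatz--Wielandt bound (Remark~\ref{r:minimax}) this makes $T$ a contraction in the weighted max-norm $\max_i \frac{1}{y_i}(\cdot)_i$ for a suitable positive eigenvector-like test vector $y$ precisely when the Perron--Frobenius eigenvalue $\sigma(\hr)<1$, which by~\eqref{e:tracedet} is equivalent to $\tp_3(\hr)<0$ and $\tp_4(\hr)<0$. The Banach fixed point theorem then yields a unique fixed point $\hx=(\hkappa,\hb)\in\BBsym_{\hr}(\bx)$ of $T$, and since $\AA$ is injective on $\CJ\times\tX_0$ (as $A$ is invertible, and $\Lambda^{-1}$ is injective on $\X^\infty_0$ by the choice~\eqref{e:KyP} of $K$), $T(\hx)=\hx$ is equivalent to $\FF(\hx)=0$. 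This proves (a). For uniqueness inside the full ball $\BBsym_{\hr}(\bx)$ rather than merely among conjugate-symmetric points: any zero of $\FF$ in $\BB_{\hr}(\bx)$ is automatically a fixed point of $T$, hence by the contraction equals $\hx$; and $\hx$ is conjugate symmetric by construction, so no genuine case distinction is needed once one observes $T$ maps $\BBsym_{\hr}$ to itself and is a contraction there.

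Finally, parts (b) and (c) are essentially translations of $\FF(\hx)=0$. From~\eqref{e:defFF}, $\FF(\hx)=0$ means $h_j(\hkappa,\sigma(m\e_0+\hb))=0$ for $1\le j\le J$ and $\tf_k(\hkappa,\sigma(m\e_0+\hb))=0$ for $k\in\ZZ_0$; since $\tf_k=|\Gacts|f_k$ and $|\Gacts|\ge 1$, the latter gives $f_k(\hkappa,\sigma(m\e_0+\hb))=0$ for all $k\in\ZZ_0$. Because $\hx\in\Ssym$ we have $\conj\hb=\hb$, so Theorem~\ref{thm:solvePDE} applies directly and yields that $\hu(\x)=\sum_{k}(\sigma(m\e_0+\hb))_k\exp(\textup{i}\L_{\hkappa}k\cdot\x)$ is a real-valued $\G$-invariant periodic solution of~\eqref{eq:OK}, which is (b). For (c), by Remark~\ref{r:dEdc} and~\eqref{e:dEdkappa} the equations $f_k=0$ on all of $\Z^3\setminus\{0\}$ (which follow from the $\ZZ_0$-equations by $\gamma_g$-equivariance of $f$, noting $f_k=0$ trivially for $k\notin\Zsym$ since $c_k=0$ there) and $h_j=0$ are exactly the statement that $E$ is stationary with respect to $c$ and with respect to $\kappa$; stationarity in $\kappa$ is, by the discussion around~\eqref{e:dEdkappa}, stationarity of $\EEE$ with respect to the symmetry-respecting proportions of $\Omega_{\hkappa}$, giving (c).

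I expect the main obstacle to be the careful bookkeeping in the map-into-itself and contraction estimates: one must handle a \emph{product} space with two different norms, track which block bound ($\ZY{i}_{i'}$ versus $\WY{i}_{i'i''}$) controls which piece of the Taylor expansion, and correctly account for the $\tfrac12$ factor distinguishing the radii polynomials $p_1,p_2$ (used for invariance) from the matrix $\M(r)$ (used for contraction). The functional-analytic content — Banach fixed point plus injectivity of $\AA$ — is routine, as is the passage from $\FF(\hx)=0$ to the PDE statement via Theorem~\ref{thm:solvePDE}; the delicate part is purely the vector-valued estimate organized through the Perron--Frobenius eigenvalue $\sigma(r)$.
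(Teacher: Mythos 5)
Your proposal follows essentially the same approach as the paper: conjugate symmetry of $T$ from $\conj\FF=\FF\conj$ and $\conj A=A\conj$, the map-into-itself estimate via an integral mean-value bound producing $p_1(\hr)+\hr_1<\hr_1$ and $p_2(\hr)+\hr_2<\hr_2$ (with the $\tfrac12$ from $\int_0^1 s\,ds$), contractivity in a weighted max-norm governed by the Perron--Frobenius eigenvalue $\sigma(\hr)$ of $\M(\hr)$, Banach's fixed point theorem plus injectivity of $\AA$, and then Theorem~\ref{thm:solvePDE} and~\eqref{e:dEdkappa} for parts (b) and (c). One stray remark is off: you claim that any zero of $\FF$ in the full ball $\BB_{\hr}(\bx)$ would have to equal $\hx$ ``by the contraction'', but $T$ was only shown to map $\BBsym_{\hr}(\bx)$ into itself and to be contractive there, so this over-claims; it is harmless, since the theorem only asserts uniqueness in $\BBsym_{\hr}(\bx)$, which your argument does establish.
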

\begin{proof}
We observe that $T$ preserves conjugate symmetry, i.e., it maps $\Ssym$ to itself. Indeed, it follows from Remark~\ref{r:extendconj} combined with Equations~\eqref{e:hfconjequivariant} and~\eqref{e:defFF} that  $\FF(\conj x) = \conj \FF(x)$ for any $x \in \FJ_+ \times \X_0$.
Since $A \conj = \conj A$ by assumption, we conclude from~\eqref{e:defAA} that $\AA\conj=\conj\AA$.
This implies that $T(\conj x) = \conj T(x)$, hence $T$ restricts to a map from $\Ssym_+$ to $\Ssym$.

Next we show that $T$ maps $\BBsym_{\hr}(\bx)$ to itself.
Fix any $x = (\kappa,b) \in \Ssym$ such that $|\kappa- \bkappa |_{\FJ} \leq \hr_1$ and $\|b- \bb \|_{\X_0} \leq \hr_1$.
Then 
\[
  |\pi_\kappa T(x) - \bkappa |_{\FJ}
  \leq
    |\pi_\kappa [T(x) - T(\bx)]  |_{\FJ}  + | \pi_\kappa[T(\bx) - \bx] |_{\FJ}
	\leq |\pi_\kappa [T(x) - T(\bx)]  |_{\FJ} +    \YY{1}.
\]
We estimate
\begin{alignat*}{1}
 |\pi_\kappa [T(x) - T(\bx)] |_{\FJ} &= \left|
 \int_0^1 \left\{ \pi_\kappa D_\kappa T((1-s)x+s\bx) (\kappa-\bkappa)  + 
 \pi_\kappa D_b T((1-s)x+s\bx) (b-\bb) \right\} ds 
  \right|_{\FJ} \\ 
 & \leq |\kappa-\bkappa|_{\FJ}  \int_0^1 \| \pi_\kappa D_\kappa T((1-s)x+s\bx) \|_{B(\FJ,\FJ)} ds   \\
 & \hspace*{1cm} +
 \| b-\bb\|_{\X_0}
 \int_0^1 \| \pi_\kappa D_b T((1-s)x+s\bx) \|_{B(\X_0,\FJ)} ds \, .
\end{alignat*}
Since $\Ssym$ is convex and $\WY{1}_1 (r)$ is linear in $r$,  we obtain
from~\eqref{e:boundZ11} and~\eqref{e:boundW11}
\begin{alignat*}{1}
\| \pi_\kappa D_\kappa T((1-s)x+s\bx) \|_{B(\FJ,\FJ)} 
& = \| \pi_\kappa D_\kappa T(\bx) + D_\kappa   T(\bx+s(x-\bx)) - D_\kappa T(\bx) \|_{B(\FJ,\FJ)} \\
& \leq \| \pi_\kappa D_\kappa T(\bx) \|_{B(\FJ,\FJ)}  +  \|D_\kappa   T(\bx+s(x-\bx)) - D_\kappa T(\bx) \|_{B(\FJ,\FJ)} \\
& \leq \ZY{1}_1 +  \WY{1}_1(s\hr_1,s\hr_2)
= \ZY{1}_1 +  s\WY{1}_1(\hr_1,\hr_2).
\end{alignat*}
Similarly, we obtain the bound
\[
\| \pi_\kappa D_b T((1-s)x+s\bx) \|_{B(\X_0,\FJ)} \leq 
 \ZY{1}_2 + s \WY{1}_2(\hr_1,\hr_2). 
\]
We combine the above estimates into
\[
 |\pi_\kappa T(x) - \bkappa |_{\FJ} \leq
  \YY{1} + \hr_1 \int_0^1 \ZY{1}_1 + s \WY{1}_1(\hr)  ds +  
  \hr_2  \int_0^1 \ZY{1}_2 + s \WY{1}_2(\hr)  ds 
  = p_1(\hr) + \hr_1  < \hr_1.
\]
An entirely analogous argument leads to the estimate
\[
 \|\pi_b T(x) - \bb \|_{\X_0} \leq 
   p_2(\hr) + \hr_2  < \hr_2.
\]
We conclude that $T$ maps $\BBsym_{\hr}(\bx)$ to itself.

Next we want to establish that $T$ is contractive on $\BBsym_{\hr}(\bx)$,
hence we need to choose a norm on $\FJ \times \X_0$.
Consider the dominant eigenvalue $\sigma(\hr)$ of $\M(\hr)$,
and let $\rho=[\rho_1,\rho_2]^T \in \R^2_+$ be an associated eigenvector.
We choose as the nor on the product space $\FJ \times \X_0$
\[
  \| (\kappa , b) \|_{\rho} =   \max \left\{ \frac{|\kappa|_{\CJ}}{\rho_1}, \frac{\| b \|_{\X_0}}{\rho_2}   \right\} .
\]
We denote $\FJ \times \X_0$ interpreted as a Banach space with norm $\|x\|_\rho$ by $X_\rho$. 
We now fix any $x\in \BBsym_{\hr}(\bx)$ and write
 the linear operator
$DT(x) \in B(X_\rho,X_\rho)$ in terms of a $2 \times 2$ block structure as in Remark~\ref{r:matrixmatrix}.
Each of the block operators is estimated using~\eqref{e:boundZ11}--\eqref{e:boundW22} to arrive at the matrix $\M(r)$ in~\eqref{e:defM}.
Indeed, the minimax characterization~\eqref{e:minimax} of the dominant eigenvalue of $\M(r)$ is the motivation for choosing the norm $\|x\|_\rho$ on the product space.
Hence we obtain 
\begin{alignat*}{1}
   \| DT(x) \|_{B(X_\rho,X_\rho)} & =    
   \max\left\{ 
   \frac{\rho_1 \| \pi_\kappa D_\kappa T(x)\|_{B(\FJ,\FJ)} +
   \rho_2 \| \pi_\kappa  D_\kappa T (x)\|_{B(\X_0,\FJ)} }{\rho_1} , \right.\\
   & \hspace*{4cm} \left.
   \frac{\rho_1 \| \pi_b D_\kappa T(x)\|_{B(\FJ,\X_0)} +
    \rho_2 \| \pi_b D_\kappa T(x) \|_{B(\X_0,\X_0)} }{\rho_2}
    \right\} ,
\end{alignat*}
which we estimate, by using the triangle inequality and~\eqref{e:boundZ11}--\eqref{e:boundW22}, 
by
\begin{alignat}{1}
   \| DT (x)\|_{B(X_\rho,X_\rho)} &    
   \leq 
   \max\left\{ 
   \frac{\rho_1 (\ZY{1}_1+\WY{1}_1 (\hr)) +
   \rho_2 (\ZY{1}_2+\WY{1}_2 (\hr))}{\rho_1} , 
   \right. \nonumber \\ & \hspace*{4cm} \left.
   \frac{\rho_1 (\ZY{2}_1+\WY{2}_1 (\hr)) +
    \rho_2 (\ZY{2}_2+\WY{2}_2 (\hr)) }{\rho_2}
    \right\} . \nonumber
   \\
   & = \sigma(\hr) <1.  \label{e:lessthan1}
\end{alignat}
Since the estimate is uniform for $x \in \BBsym_{\hr}(\bx)$
we conclude that $T$ is contractive on $\BBsym_{\hr}(\bx)$
for the norm $\|x\|_\rho$.

From the above and the Banach contraction mapping theorem 
we conclude that $T$ has a unqiue fixed point $\hx$ in $\BBsym_{\hr}(\bx)$.
Since the matrix $A$ is invertible by assumption, it follows from~\eqref{e:defAA} that $\AA$ is invertible, hence injective.
We conclude that 
$\hx$ is the unique zero of $\FF$ in $\BBsym_{\hr}(\bx)$. This finishes the proof of part (a).

Part (b) follows from Theorem~\ref{thm:solvePDE}. We note that it follows from part (a) that $\hkappa$ is real-valued, whereas the assumption that $\hr_1 \leq \rstar_1<1$ implies that $\hkappa_j>0$ for all $1\leq j \leq J$ by Remark
~\ref{r:kappapositive}.

Part (c) follows from $H_j(\hkappa,\hb)=h_j(\hkappa,\sigma(m\e_0+\hb))=0$ 
and Equation~\eqref{e:dEdkappa}.
\end{proof}

\begin{remark}\label{r:IADF}
It follows from~\eqref{e:lessthan1} that $\| I - \AA D \FF(\hx) \|_{B(X_\rho,X_\rho)} <1$, hence $\AA D \FF(\hx)$ is invertible. 
It is immediate that $D \FF(\hx)$ is injective and $\AA$ is surjective.  
Moreover, it is not difficult to see that both $\AA:\X_0 \to \tX_0$ and $D \FF (\hx): \X_0 \to \tX_0$ are Fredholm operators of index $0$. We conclude that both $D \FF(\hx)$ and $\AA$ are invertibility. In particular, it is not necessary to assume in Theorem~\ref{thm:contraction} that the matrix $A$ is invertible, as this is implied by the inequalities~\eqref{e:pleq0}.
\end{remark}

Theorem~\ref{thm:contraction} is a variation on a general strategy.
We do not review the relevant literature here, but refer instead to~\cite{ariolikochintegration,JBJPNotices,Nakao2001survey,Plum2001survey,Gomez-Serrano} and then references therein.
We merely note her that some of the notation used here was introduced in~\cite{Yamamoto}, while the idea to collect the necesary inequalities in radii polynomials stems from~\cite{DLM}.
There are some ingredients to the theorem that are less standard. First, we
restrict attention to the conjugate symmetric set, which simplifies the
estimates. Second, following~\cite{JBAMS} (see also \cite{JBJonathan}) we do
not choose a norm on the product space $\FJ \times \X_0$ a priori. Indeed, we
keep track of the different components of the derivatives when performing the
estimates. This refinement leads to the two polynomials $p_1(r_1,r_2)$ and
$p_2(r_1,r_2)$ depending on two variables rather than a single polynomial in a
single variable. Moreover, it allows us to separate the invariance of
$\BB_{\hr}(\bx)$ from contractivity. The norm chosen for contractivity has been
optimized (in the sense explained in Remark~\ref{e:minimax}), which improves on
the arguments in~\cite{JBAMS}. An alternative choice is to work with a weighted $1$-norm on the product space, see e.g.~\cite{BredenKuehn}. We refer to~\cite{vdBW2} for a similar
argument in the context of continuation. As in~\cite{NS} the splitting of the
Jacobian into a $Z$-part and a $W$-part leads to a factor $\frac{1}{2}$ in the
$W$-terms in $p_1$ and $p_2$, at the expense (see Remark~\ref{r:simpler1}) of needing to
check the additional inequalities~\ref{e:tp34ll0}. Finally, we note that
instead of the fixed point map $T$ defined in~\eqref{e:defT}, one can alternatively
derive a similar result by considering the map $x \to x-D\FF(\bx)^{-1}\FF(x)$,
cf.~\cite{NS}, although the conditions~\eqref{e:pleq0} would need to be adapted slightly (the
main difference compared to~\cite{NS} is that we are dealing with a product
system here).

There are (at least) two ways to simplify the conditions~\ref{e:pleq0}, as outlined in the next remarks.
\begin{remark}\label{r:simpler1}
One may simplify the conditions~\eqref{e:pleq0} to the \emph{stronger} requirements
\begin{alignat*}{1}
\YY{1} + \hr_1 [\ZY{1}_1 + \WY{1}_1(\hr) - 1] + r_2 [\ZY{12} + \WY{1}_2(\hr) & < 0 \\
\YY{2} + \hr_1 [\ZY{2}_1 + \WY{2}_1(\hr)] + r_2 [\ZY{22} +\WY{2}_2(\hr) - 1] & < 0. 
\end{alignat*}	
Indeed, when these inequalities are satisfied the estimate $\sigma(\M(\hr))<1$ on the dominant eigenvalue is obtained by using the test vector $y=(\hr_1,\hr_2)^T$
in~\eqref{e:minimax}. This corresponds to the approach taken in~\cite{JBAMS}.
\end{remark}

\begin{remark}
The sets $\varOmega_i \bydef \{ r \in \R^2_+ : p_i(r)<0 \}$ are convex for $i=1,2$. 
Furthermore, if $(r_1,r_2) \in \varOmega_1$, then $(r_1,\tilde{r}_2) \in \varOmega_1$ for all $0 < \tilde{r}_2 < r_2$, and similarly for $\varOmega_2$ (with the roles of $r_1$ and $r_2$ exchanged).
We conclude that if $r, r' \in \varOmega_1 \cap \varOmega_2$, which we assume to be nonempty, then so is
$(\min\{r_1,r'_1\},\min\{r_2,r'_2\})$. 
It follows that there is a \emph{minimal radius vector} $r_{\min} \in
\overline{\varOmega_1 \cap \varOmega_2}$, i.e., the point in the latter set which is closest to the origin. Clearly $p_1(r_{\min})=p_2(r_{\min})$. It follows that there exists a
direction $\hat{y} \in \R^2_+$ pointing into $\varOmega_1\cap\varOmega_2$ from $r_{\min}$, i.e., $\nabla p_i(r_{\min}) \cdot \hat{y} <0$
for $i=1,2$.

Now assume that $\WY{1}_{12}=\WY{1}_{21}$ and $\WY{2}_{12}=\WY{2}_{21}$, which
is natural when the corresponding estimates are derived by bounding the second
derivatives $\pi_\kappa D_b D_\kappa T(x)$ and $\pi_b D_b D_\kappa T(x)$,
respectively. Then the estimate $\sigma(\M(\hr))<1$ is obtained by using the
test vector $y=\hat{y}$ in the minimax charaterization~\eqref{e:minimax}.
When the symmetry condition
$\WY{i}_{12}=\WY{i}_{21}$ is satisfied, the inequalities~\eqref{e:tp34ll0} are thus implied
by~\eqref{e:p12ll0} for $r \in \varOmega_1\cap\varOmega_2$ sufficiently close to $r_{\min}$. Hence, in that case finding an $\hr$ such that the inequalities~\eqref{e:p12ll0} hold suffices to prove existence of a solution in $\BBsym_{\hr}(\bx)$.
\end{remark}

We confined evaluation of the derivatives in~\eqref{e:tenbounds} to points in the symmetric set $\BBsym_r(\bx)$, mainly because only real-valued $\kappa$ are physically relevant and this avoids some additional estimates. A drawback is that we require $A$ to be conjugate symmetry preserving.
Indeed, both the assumptions on~$A$ about invertibility and conjugate symmetry preservation in Theorem~\ref{thm:contraction} need to be verified. In particular, while in some other papers the invertibility check is implied by negativity of the radii polynomials, this is not the case for the particular (somewhat weaker) formulation of the bounds~\eqref{e:tenbounds} chosen here. Furthermore, since $A$ only approximates the inverse of the Jacobian, unless we take extra care it would satisfy the conjugate equivariance $\conj A = A \conj$ only approximately. 
\begin{remark}\label{r:tweakA}
In practice we verify the assumptions on $A$ in
Theorem~\ref{thm:contraction} as follows.
After computing a numerical inverse $A$ of the
approximate Jacobian $M$ we first replace it by $A \mapsto \frac{1}{2} (A+ \overline{A}^T)$ to make it selfadjoint (need in the Morse index computation, see Section~\ref{s:stability}), and then replace the resulting matrix by $A \mapsto \frac{1}{2} (A+ \conj A \conj)$ 
to ensure that it preserves conjugation symmetry (implemented using the explicit matrix representation~$\conj A \conj = \overline{\J}\;\! \overline{A}\;\! \J$, with $\J$ given in \eqref{e:defJ}). 
\end{remark}

\begin{remark}\label{r:errorcontrol}
We have explicit error control. Let
\[
  \bu(\x) \bydef \sum_{k \in \Z^3,\knorm\leq K} (\sigma(m\e_0+\bb))_k \exp(\textup{i} \L_{\bkappa} k \cdot \x)
\]
be the approximate periodic solution.
Then it is a corollary of Theorem~\ref{thm:contraction} that 
\[
	\bigl\|\hu (\x) - \bu\bigl( (\L_{\bkappa}^T)^{-1}  \L_{\hkappa}^T  \x\bigr) \bigr\|_\infty \leq \hr_2 ,
\]
where the rescaling of $\x$ is necessary to correct for dephasing. 
Moreover, $|\hkappa_j - \bkappa_j| \leq \bkappa_j \hr_1 $
for  $1\leq j \leq J$.
\end{remark}

The operator norms appearing in~\eqref{e:tenbounds}
are given explicitly by
\begin{alignat}{1}
  \|Q\|_{B(\FJ,\FJ)} &= 
  \max_{1\leq j \leq J} \bkappa_j^{-1} \sum_{j'=1}^J | Q_{j j'}| \bkappa_{j'} \,,
  \label{e:norm11}\\	
  \|Q\|_{B(\X_0,\FJ)} &= 
  \sup_{k\in\ZZ_0} \omega_{k}^{-1} \max_{1\leq j \leq J} \bkappa_j^{-1} |Q_{jk}| \,,
  \label{e:norm12} \\	
  \|Q\|_{B(\FJ,\X_0)} &=  
  \max_{\epsilon \in \{-1,1\}^J } \sum_{k\in\ZZ_0} \omega_k
  	 	\biggl| \sum_{j=1}^J  Q_{k j} \epsilon_j \bkappa_j \biggr| \,,
  \label{e:norm21}\\	
  \|Q\|_{B(\X_0,\X_0)} &= 
  \sup_{k\in\ZZ_0} \omega_{k}^{-1} \sum_{k'\in\ZZ_0} \omega_{k'} |Q_{k'k}| \,.
  \label{e:norm22}	
\end{alignat}
When the operators reduce to finite matrices (e.g.~$A_{ii'}$, $i,i'=1,2$)
then the suprema reduce to maxima and the infinite sums reduce to finite ones,  hence in that case these expressions can be evaluated explicitly.

For future use we note that, for any Banach space $X'$ (in particular  $X'=\FJ$ and $X'=\X_0$)
\begin{alignat}{1}
\| Q \|_{B(\X_0,X')} &= 
\sup_{k\in\ZZ_0} \omega_k^{-1} \| Q \e_k\|_{X'} \label{e:l1sup} \\ 
 &= \max\bigl\{ \| Q \pi_{K'} \|_{B(\X_0,X')}, \| Q(I-\pi_{K'})\|_{B(\X_0,X')} \bigr\}
\qquad \text{for any } K'>0.  \label{e:splitl1norm}
\end{alignat}
For $X'=\C$ the characterization~\eqref{e:l1sup} may also be expressed as
\begin{equation}
\label{e:dualest}
\biggl| \sum_{k \in \ZZ_0} q_k a_k \biggr| \leq \|q\|_{\X_0}^* \|a\|_{\X_0}
\qquad\text{with } \|q\|_{\X_0}^*=\sup_{k\in\ZZ_0} \omega_k^{-1} |q_k|.
\end{equation}

\begin{remark}
In addition to~\eqref{e:KyP}, in Appendix~\ref{sa:estimates}, where to derive explicit bounds, we will put additional restrictions on the computational parameter $K$, namely
\begin{equation}\label{e:Krestrictions}
  K^2 > \max\{ y_P , \gamma (1-r_1^*)^{-1} , (1-r_1^*)^{-1}, 2  \}.
\end{equation}
\end{remark}

\section{Computation of the energy}
\label{s:energy}

In this section we derive estimates that link the energy $E(\hkappa,\sigma(m\e_0+\hb))$ of the stationary point $\hx=(\hkappa,\hb)$, which is obtained in Theorem~\ref{thm:contraction} to the energy of the numerical approximation~$\bx=(\bkappa,\bb)$. Since $\bb$ represents finite many nontrivial Fourier modes, the energy $E(\bkappa,\sigma(m\e_0+\bb))$ can be computed using interval arithmetic.
We homotope between $\hx$ and $\bx$ by setting
\[
  \E(s) \bydef E((1-s)\hkappa+s\bkappa,\sigma(m\e_0+(1-s)\hb+s\bb))
  \qquad \text{for } s\in[0,1].
\]
Since $\hx$ is a critical point of $E$ (see Remark~\ref{r:dEdc} and Equation~\eqref{e:dEdkappa}), we have $\E'(0)$.
In the remainder of this section we will derive a bound
\[
  \E^{[2]} \geq \max_{s \in [0,1] } \bigl|\E''(s)\bigr|.
\]
Taylor's theorem then implies the energy error bound
\begin{equation}\label{e:enclosedenergy}
  \bigl| E(\hkappa,\sigma(m\e_0+\hb)) - E(\bkappa,\sigma(m\e_0+\bb)) \bigr| = \bigl| \E(0)-\E(1)\bigr| \leq \frac{1}{2} \E^{[2]}. 
\end{equation}

To express $\E''(s)$ conveniently we introduce 
\begin{alignat*}{2}
   \kappa_s & \bydef (1-s) \hkappa + s \bkappa  &\qquad&\text{for } s\in [0,1], \\ 
   c_s & = m\e_0 + (1-s) \sigma(\hb) + s \sigma(\bb)  &\qquad&\text{for } s\in [0,1] .
\end{alignat*}
and
\begin{alignat*}{1}
 \tkappa &\bydef \hkappa-\bkappa \\
  \tc &\bydef \sigma(\hb-\bb) = \sigma(\hb)-\sigma(\bb) ,
\end{alignat*}
where we have used linearity of $\sigma$.
Furthermore, $\tc \in \Xsym$ and $\tc_0=0$.
Since $\hx \in \BBsym_{\hr}(\bx)$ we have $|\tkappa|_{\CJ} \leq \hr_1$,
as well as  $\| \tc \|_{\nu} \leq  \hr_2$
and hence $|\tc_k| \leq \hr_2 \omega_k^{-1}$ by~\eqref{e:pointwise}.
moreover, $\hx, \bx  \in \Ssym_+$ implies that
$\kappa_s \in \RJ_+$ and $\tkappa \in \RJ$, as well as $\fullconj c_s = c_s$ and  $\fullconj \tc = \tc$.

With this notation in place we write
\begin{equation}\label{e:d2Eds2}
  \E''(s)  = D^2_\kappa E(\kappa_s,c_s) [\tkappa,\tkappa] +  2  D_\kappa D_c E(\kappa_s,c_s) [\tkappa,\tc]  + D^2_c E(\kappa_s,c_s) [\tc,\tc] .
\end{equation}
We estimate each of these terms separately.
The main ideas of this analysis are similar to Appendix~\ref{sa:estimates}.
In all these estimates we fix $s\in [0,1]$ arbitrary.
We begin with the first term in~\eqref{e:d2Eds2}:
\begin{alignat*}{1}
D^2_\kappa E(\kappa_s,c_s) [\tkappa,\tkappa] 
& = \frac{1}{2} \sum_{k\in \Z^3_0} P''(\Dk \kappa_s) \, 
(\Dk \tkappa)^2 \,  (c_s)_k (c_s)_{-k}. 
\end{alignat*}
We write $c_s = \sigma(\bb) + \hr_2 c$ for some $c \in \Xsym$ with $\|c\|_\nu \leq 1$ (and hence $|c_k| \leq \omega_k^{-1}$).
We then expand $(c_s)_k (c_s)_{-k}$ and write
\begin{alignat}{1}
\frac{1}{2}  \sum_{k\in \Z^3_0} P''(\Dk \kappa_s) \, 
(\Dk \tkappa)^2 \,  (c_s)_k (c_s)_{-k} & =
 \frac{1}{2} \sum_{k\in \Z^3_0} P''(\Dk \kappa_s) \, 
(\Dk \tkappa)^2 \,  \sigma(\bb)_k \sigma(\bb)_{-k} \nonumber \\
&\hspace*{1cm} + 
 \frac{\hr_2}{2} \sum_{k\in \Z^3_0} P''(\Dk \kappa_s) \, 
(\Dk \tkappa)^2 \,  [\sigma(\bb)_k c_{-k} + \sigma(\bb)_{-k} c_{k}] \nonumber \\
&\hspace*{2cm}  +
\frac{\hr_2^2}{2} \sum_{k\in \Z^3_0} P''(\Dk \kappa_s) \, 
(\Dk \tkappa)^2 \,  c_k c_{-k} .  \label{e:D2kappaE}
\end{alignat}
We note that 
\begin{equation}\label{e:Dktkappa}
  |\Dk \tkappa| \leq |\Dk|_{\RJ}^* |\tkappa|_{\RJ} \leq \hr_1 \Dk \bkappa.
\end{equation}
The first term in~\eqref{e:D2kappaE} we then estimate by computing the finite interval arithmetic sum (using Lemma~\ref{l:symquad})
\[
\frac{1}{2} \Bigl| \sum_{k\in \Z^3_0} P''(\Dk \kappa_s) \, 
(\Dk \tkappa)^2 \,  \sigma(\bb)_k \sigma(\bb)_{-k}  \Bigr|
\leq \PHI{1} \bydef
\frac{\hr_1^2}{2}  \sum_{k \in \ZZ^K_0}
 \bigl|\PPP^{[2]}_k(\hr_1)\bigr|  (\Dk \bkappa)^2  |\Gacts| \,
  |\bb_k| |\sigma(\bb)_{-k}|  ,
\] 
wher $\PPP^{[2]}_k(\hr_1)$ is defined in~\eqref{e:PPP2}, replacing $\rstar_1$ by $\hr_1$.
For the second term in~\eqref{e:D2kappaE} we rearrange terms (sending $k \to -k$ in one of them), and use Lemma~\ref{l:symquad} and~\eqref{e:dualest} to estimate 
\begin{alignat*}{1}
	 \frac{\hr_2}{2} \Bigl| \sum_{k\in \Z^3_0} P''(\Dk \kappa_s) \, 
	(\Dk \tkappa)^2 \,  [\sigma(\bb)_k c_{-k} + \sigma(\bb)_{-k} c_{k}]  \Bigr|
	& =
	  \hr_2 \Bigl|  \sum_{k\in \ZZ^K_0} P''(\Dk \kappa_s) \, 
	(\Dk \tkappa)^2 \, |\Gacts| \, \sigma(\bb)_k c_{-k} \Bigr| \\
&\leq  \PHI{2}  \bydef \hr_1^2 \hr_2
	 \max_{k \in \ZZ^K_0} |\PPP^{[2]}_k(\hr_1)\bigr|  (\Dk \bkappa)^2 \, 
	  |\bb_k | \, \nu^{-\knorm} ,
\end{alignat*}
since $|c_{-k}| \leq |\Gacts|^{-1} \nu^{-\knorm}$.
For the third term in~\eqref{e:D2kappaE} we estimate, again using~\eqref{e:Dktkappa},
\begin{alignat*}{1}
\frac{\hr_2^2}{2} \Bigl|  \sum_{k\in \Z^3_0} P''(\Dk \kappa_s) \, 
(\Dk \tkappa)^2 \,  c_k c_{-k} \Bigr|   &\leq 
\frac{\hr_1^2 \hr_2^2}{2}  \max_{k\in \Z^3_0} |P''(\Dk \kappa_s)| \, 
(\Dk \bkappa)^2 \, \omega_k^{-2}
  \\  & \leq   \PHI{3} \bydef \frac{\hr_1^2 \hr_2^2}{2} 
  \max\left\{ 
    \max_{k \in  \ZZ^K_0} | \PPP^{[2]}_k(\hr_1)| \, (\Dk \bkappa)^2 \, \omega_k^{-2} 
   \, , \,  \EE^{[2]} (K,\nu,\hr_1)
   \right\}
\end{alignat*}
where in the final inequality we have used the bounds $\omega_k \geq \nu^{\knorm}$ and Lemma~\ref{l:Eterm}.

Next, we estimate the second term in~\eqref{e:d2Eds2}:
\begin{alignat*}{1}
D_\kappa D_c E(\kappa_s,c_s) [\tkappa,\tc] 
& = \frac{1}{2} \sum_{k\in \Z^3_0} P'(\Dk \kappa_s) \, 
(\Dk \tkappa) \,
[(\tc)_k (c_s)_{-k} + (c_s)_{k} \tc_{-k} \\
& = \sum_{k\in \Z^3_0} P'(\Dk \kappa_s) \, 
(\Dk \tkappa) \,  (c_s)_{-k} \tc_{k}. 
\end{alignat*}
Using the notation as above, we split this into two terms:
\begin{alignat}{1}
\sum_{k\in \Z^3_0} P'(\Dk \kappa_s) \, 
(\Dk \tkappa) \,  (c_s)_{-k} \tc_{k}
& \nonumber\\
& \hspace*{-1cm}
= \sum_{k\in \Z^3_0} P'(\Dk \kappa_s) \, 
(\Dk \tkappa) \,  \sigma(\bb)_{-k} \tc_{k}
 +
\hr_2 \sum_{k\in \Z^3_0} P'(\Dk \kappa_s) \, 
(\Dk \tkappa) \,  c_{-k} \tc_{k} . \label{e:D2mixE}
\end{alignat}
We estimate the first term in the righthand side of~\eqref{e:D2mixE} by
\begin{alignat*}{1}
 \biggl|  \sum_{k\in \Z^3_0} P'(\Dk \kappa_s) \, 
                      (\Dk \tkappa) \,  \sigma(\bb)_{-k} \tc_{k} \biggr|
  & =
  \biggl| \sum_{k\in \ZZ_0} P'(\Dk \kappa_s) \, 
                 (\Dk \tkappa) \, |\G.k| \, \sigma(\bb)_{-k} \tc_{k} \biggr| 
  \\ & \leq 
  \max_{k \in \ZZ_0^K} 
  |P'(\Dk \kappa_s)| \, ( \Dk \tkappa) \, |\G.k| \, | \sigma(\bb)_{-k} | \, \hr_2 \omega_k^{-1}
  \\&\leq
  \PHI{4} \bydef \hr_1 \hr_2 \max_{k \in \ZZ^K_0}
  | \PPP^{[1]}_k(\hr_1)|  \, (\Dk \bkappa) \, |\sigma(\bb)_{-k}| \, \nu^{-\knorm}.
\end{alignat*}
where we have used Lemma~\ref{l:symquad} to justify the identity and we have applied~\eqref{e:dualest} to establish the first inequality,
and~\eqref{e:PPP1} for the second one.
The second term in the righthand side of~\eqref{e:D2mixE}
can be estimated similarly by 
\begin{alignat}{1}
 \hr_2 \biggl| \sum_{k\in \Z^3_0} P'(\Dk \kappa_s) \, 
(\Dk \tkappa) \,  c_{-k} \tc_{k} \biggr|
& = 
\hr_2 \biggl| \sum_{k\in \ZZ_0} P'(\Dk \kappa_s) \, 
(\Dk \tkappa) \, |\G.k| \, c_{-k} \tc_{k} \biggr| \nonumber \\
&  \leq 
\hr_1 \hr_2^2 \sup_{k \in \ZZ_0}
| \PPP^{[1]}_k(\hr_1)| \, (\Dk \bkappa) \, |\G.k| \omega_k^{-2},
\label{e:supinsecondterm}
\end{alignat}
where we have also used that $|c_{-k}| \leq \omega_k^{-1}$.
The righthand side in~\eqref{e:supinsecondterm} is then estimated, analogously 
to above, but now including a tail term, by
\begin{equation}\label{e:phi5}
\PHI{5} \bydef \hr_1 \hr_2^2 
  \max\left\{ 
    \max_{k \in  \ZZ^K_0} | \PPP^{[1]}_k(\hr_1)| \, (\Dk \bkappa) \, |\G.k| \, \omega_k^{-2} 
   \, , \,  \EE^{[1]} (K,\nu)
   \right\} ,
\end{equation}
where we have used $\omega_k^{-1} \leq |\G.k|^{-1}$ and  Lemma~\ref{l:Eterm} and to control the terms in~\eqref{e:supinsecondterm} with $\knorm > K$.

Finally, we estimate the third term in~\eqref{e:d2Eds2}:
\begin{equation}\label{e:D2cE}
D^2_c E(\kappa_s,c_s) [\tc,\tc] 
=  \sum_{k\in\Z^3_0} P(\Dk) \tc_k \tc_{-k} 
     + 3 \conv{ \sigma(m\e_0+ b_s)^2 \, \tc^2 }_0.
\end{equation}
We estimate each term separately.
The first term in the righthand side of~\eqref{e:D2cE}
we estimate analogously to~\eqref{e:supinsecondterm} above: 
\begin{equation*}
 \Bigl| \sum_{k\in\Z^3_0} P(\Dk \kappa_s) \tc_k \tc_{-k}  \Bigr|
  \leq  \hr_2^2  \sup_{k\in\ZZ_0} |P(\Dk \kappa_s)| 
  									\, |\G.k| \, \omega_k^{-2}.
\end{equation*}
For $\knorm > K$ we bound $ \omega_k^{-2} \leq \nu^{-2\knorm}$
and use the estimate provided by Lemma~\ref{l:Eterm}.
Similarly to~(\eqref{e:phi5}) we obtain
\[
  \Bigl| \sum_{k\in\Z^3_0} P(\Dk \kappa_s) \tc_k \tc_{-k}  \Bigr|
  \leq \PHI{6} \bydef  \hr_2^2 
  \max\left\{ 
    \max_{k \in  \ZZ^K_0} | \PPP^{[0]}_k(\hr_1)| \,|\G.k|\, \omega_k^{-2} 
   \, , \,  \EE^{[0]} (K,\nu,\hr_1)
   \right\} .
\]
To estimate the second term in righthand side of~\eqref{e:D2cE} we write
\[
   \sigma(m\e_0+ b_s) = \sigma(m\e_0 + \bb) + \hr_2 c
\] 
for some $c \in \Xsym$ with $\|c\|_\nu \leq 1$.
We amply the mean value theorem to obtain
\[
   3\conv{ \sigma(m\e_0+ b_s)^2 \, \tc^2 }_0  
   = 3\conv{\sigma(m\e_0 + \bb)^2 \tc^2 }_0 + 6
   \hr_2 \conv{(\sigma(m\e_0 + \bb) + r c) \tc^2 }_0 .
\]
for some $r \in (0,\hr_2)$.
Using the Banach algebra property we estimate (for $r \in (0,\hr_2)$)
\begin{alignat*}{1}
  3 \bigl|\conv{ \sigma(m\e_0+ \bb)^2 \, \tc^2 }_0 \bigr|
  & \leq \PHI{7} \bydef 3 \hr_2^2  \| \conv{\sigma(m\e_0 + \bb)^2 } \|_{\nu} ,\\
  3 \bigl|\conv{ (\sigma(m\e_0 + \bb) + r c) \tc^2  }_0 \bigr|
  & \leq \PHI{8} \bydef 6 \hr_2^3 \left( \| \sigma(m\e_0 + \bb)  \|_{\nu} +  \hr_2 \right).
\end{alignat*}

To conclude, we collect all the terms and set
\[
  \E^{[2]} = \roundup \sum_{n=1}^{8} \PHI{n}.
\]
so that we can explicitly enclose the energy $E(\hkappa,\sigma(m\e_0+\hb))$ of the critical point via~\eqref{e:enclosedenergy}.

\section{The Morse index}
\label{s:stability}

The zero $\hx=(\hkappa,\hb)$ of $\FF$ found in Theorem~\ref{thm:contraction} corresponds to a critical point of $\EEE(u)$, both with respect to variations in the profile $u$ and with respect to variations in the domain size. To be precise, recalling  that mass is fixed, $(\hkappa,m \e_0 + \sigma(\hb))$ is a critical point of $E(\kappa,c)$ as defined in~\eqref{e:defE}, with respect to variations in $\C^J \times   \{ c \in \C^{\Z^3} :  c_0=m \}$.

For stability questions,  in order to restrict attention to
real-valued functions $u(\x)$ and physical domains, we should consider variations in $\R^J_+ \times
\{ c \in \C^{\Z^3} : c_0=m , c_{-k}=c_k^* \}$ only.
In this section we explore how to determine the Morse index of the critical point when restricting to the class of \emph{symmetric} (i.e.~$\G$-invariant) perturbations in $\C^J \times  \{ c \in \Xsym :  c_0=m \}$, or rather the conjugate symmetric subset thereof. The stability analysis thus includes variations in the domain size, while we restrict attention to $\G$-symmetric perturbations
Indeed, we study the Morse index of 
$\hx=(\hkappa,\hb) \in \Ssym_+$ as a critical point of $\tE(\kappa,b) = E(\kappa,m \e_0 + \sigma(b))$ for variations in 
\[
  \Ssym_0 \bydef \{ x \in \C^J \times \X_0 : \conj x = x \}.
\]  
Clearly $\tE$ is real-valued on $\Ssym_0$.
We recall that $\conj x = \overline{\J x}$, where $\J$ is defined in~\eqref{e:defJ}
and satisfies $\J^T=\J$  and $\overline{\J}=\J^{-1}$, see~\eqref{e:usefulJ}.
Throughout this section we will use superscript $T$ to denote the transpose and overline to denote elementwise complex conjugation.

Although we have been working over the field $\C$ in most of our arguments, $\Ssym_0$ is a real vector space. The Morse index of $\hx$ as a critical point of $\tE$ on $\Ssym_0$ is defined by 
\[
  \morseindex(\hx) \bydef \max  \bigl\{ \dim_{\R}(V) : V \subset_\R \Ssym_0 \text{ such that } v^T D^2 \tE (\hx) v < 0    \text{ for all }  v \in V \bigr\},
\]
where by $V \subset_\R \Ssym_0$ we denote an $\R$-linear subspace.
We have used matrix notation for the second derivative, which we will do frequently in this section.
If $\morseindex(\hx)>0$ then $\hx$ is a saddle point 
under $\G$-invariant perturbations, hence not a local minimizer. 
For a periodic domain of \emph{fixed} size, such a stationary profile may still be stable. An analogous analysis, but now without varying $\kappa$, may be performed to establish the latter type of (in)stability.

In the case $\morseindex(\hx)=0$ we
need slightly more information to conclude about local minimization, namely consider 
\[
  \morseindex_0(\hx) = \max  \{ \dim_{\R}(V) : V \subset_\R \Ssym_0 \text{ such that } v^T D^2 \tE (\hx) v \leq 0    \text{ for all } 0\neq v \in V \}.
\]
If $\morseindex(\hx) = \morseindex_0(\hx)$ then there are no neutral directions,
and in that case $\morseindex(\hx)=0$ implies that $\hx$ is a nondegenerate local minimum of $\tE$. As we will see below, for critical points found through Theorem~\ref{thm:contraction} it always holds that $\morseindex(\hx) = \morseindex_0(\hx)$.
 
The second derivative of $\tE(x)$ in a direction $v\in \Ssym_0$ can be expressed as
\[
  D^2 \tE (x) [v,v]= D^2 \tE (x) [v, \conj v]= \overline{v}^T \;\! \overline{\J} \;\! D^2 \tE(x) \;\! v ,\qquad \text{ for any }v \in \Ssym_0.
\]
Furthermore, by differentiating the identity $\tE(\conj x) = \overline{\tE(x)}$ twice we infer that
\begin{equation}\label{e:JD2EJ}
  \overline{\J} \;\! D^2 \tE (x) \overline{\J} =  \overline{D^2 \tE (x)}
  \qquad \text{ for any } x  \in \Ssym_0.
\end{equation}
To simplify notation we now introduce
\[
  \THETA \bydef \overline{\J}\;\! D^2 \tE(\hx).
\]

It follows from~\eqref{e:JD2EJ}, the properties of $\J$ in~\eqref{e:usefulJ}, and $(D^2 \tE(\hx))^T = D^2 \tE(\hx)$, that  
\[
  \overline{\THETA^T} = \THETA.
\]
In particular, all eigenvalues of $\THETA$ are real.
We introduce 
\[
  \Ssym_0^c \bydef \{ x \in \C^J \times \X_0 : \conj x = -x \},
\]
so that $\C^J \times \X_0  = \Ssym_0 \oplus \Ssym_0^c$ as vector spaces over $\R$. Multiplication by $i$ provides a natural isomorphism between $\Ssym_0$ and $\Ssym_0^c$.
Furthermore,~\eqref{e:JD2EJ} implies that  $\THETA \conj = \conj \THETA$, hence $\THETA$  leaves $\Ssym_0$ and $\Ssym_0^c$ invariant.
We may thus interpret $\THETA$ both as a $\C$-linear operator on $\X_0$
and as an $\R$-linear operator on $\Ssym_0$.
It is not difficult to infer that 
the number $\negnum_\C(\THETA)$ of negative eigenvalues of $\THETA$ as a $\C$-linear operator over $\X_0$ equals the number $\negnum_\R(\THETA)$ of negative eigenvalues of $\THETA$ as an $\R$-linear operator over $\Ssym_0$ (both counted with multiplicity):
\[ 
   \negnum_\C(\THETA) =\negnum_\R(\THETA). 
\] 
The former is easier to study, because we have been computing over $\C$  in $\C^J \times \X_0$,
while the latter is easily seen to equal the Morse index:
\[  
   \morseindex(\hx)= \negnum_\R(\THETA) . 
\] 
From now on we simply denote $\negnum=\negnum_\C=\negnum_\R$.

To get our hands on the eigenvalues of $\THETA$, we exploit that 
\[
  D \FF(\hx) = \overline{\J} \;\! D^2 \tE (\hx) = \THETA,
\]
which follows from the arguments in Remark~\ref{r:symmetricHessian}, see~\eqref{e:D2EisJDF}.
The case that $\THETA$ has eigenvalue $0$  corresponds to 
$\morseindex_0(\hx) > \morseindex(\hx)$.
However, as argued in Remark~\ref{r:IADF}, the operator $D \FF(\hx)$ is injective, hence we conclude that
\[ 
  \morseindex(\hx) = \morseindex_0(\hx).
\]

To determine $\negnum(\THETA) = \negnum(D \FF(\hx))$ we study 
the eigenvalues of the inverse $D \FF(\hx)^{-1}$, or rather
the eigenvalue of the approximate inverse $\AA$
(where $\overline{\AA}^T=\AA$ if we choose the matrix $A$ such that
$\overline{A}^T=A$, see Remark~\ref{r:tweakA}, since for the tail $\Lambda^{-1}$ this holds by construction).
Indeed we have both $\negnum(D \FF(\hx)) = \negnum(D \FF(\hx)^{-1})$ and
$\negnum(D \FF(\hx)^{-1}) = \negnum(\AA)$.
The latter follows by considering the homotopy 
\[ 
   \AA_s \bydef (1-s) \AA + s D \FF(\hx)^{-1} \qquad\text{for }0 \leq s \leq 1. \]
 It follows from the proof of Theorem~\ref{thm:contraction}, see Remark~\ref{r:IADF}, that $\|I - \AA D \FF(\hx)\|_{B(X_\rho,X_\rho)}<1$,
hence by the triangle inequality $\|I - \AA_s D \FF(\hx)\|_{B(X_\rho,X_\rho)}<1$ for all $s \in [0,1]$. We infer that no eigenvalues can cross $0$ along (the spectral flow of) the homotopy,
so that 
\[
  \negnum(\AA) =  \negnum (D \FF(\hx)^{-1}) = \negnum (D \FF(\hx)).
\]
Since $\AA$ has positive diagonal tail (namely $(P(\Dkk)|\Gacts|)^{-1} >0$ for $\knorm > K$),
it suffices to study the eigenvalues of the (finite dimensional) matrix $A$:
\[
   \negnum(\AA) = \negnum (A).
\] 
The number of negative eigenvalues of the self-adjoint matrix $A$ is determined computationally.
By Sylvester's law of inertia we know that $\negnum(\overline{V}^T \!\! A V)=\negnum(A)$
for any nonsingular $V$. We choose $V$ to consist of numerical approximations of eigenvectors of $A$, so that $\overline{V}^T \!\! A V$ is approximately diagonal.
Indeed, we check rigorously that $\overline{V}^T \!\! A V$ is diagonally dominant and then 
use Gershgorin's theorem to rigorously compute
\[
  \negnum (\overline{V}^T \!\!  A V)
  =\negnum (A)
  =\negnum(\AA)
  =\negnum (D \FF(\hx)^{-1})
  =\negnum (D \FF(\hx)) 
  =\negnum(\THETA)
  =\morseindex(\hx).
\]

\section{Numerical results}
\label{s:numerics}

In this Section we explain how we constructed we rigorously computed energy minimizers at various points $(m,\gamma)$. We also present some details about the families of solutions as $m$ is varied with fixed~$\gamma$ and show some specific solutions. Before describing our results we first explain the use of interval arithmetic and outline the computational strategy. Some aspects of the latter are summarized in Figure \ref{Fig:Cartoon}. \\

\begin{figure}
    \centering
        \includegraphics[width=\textwidth]{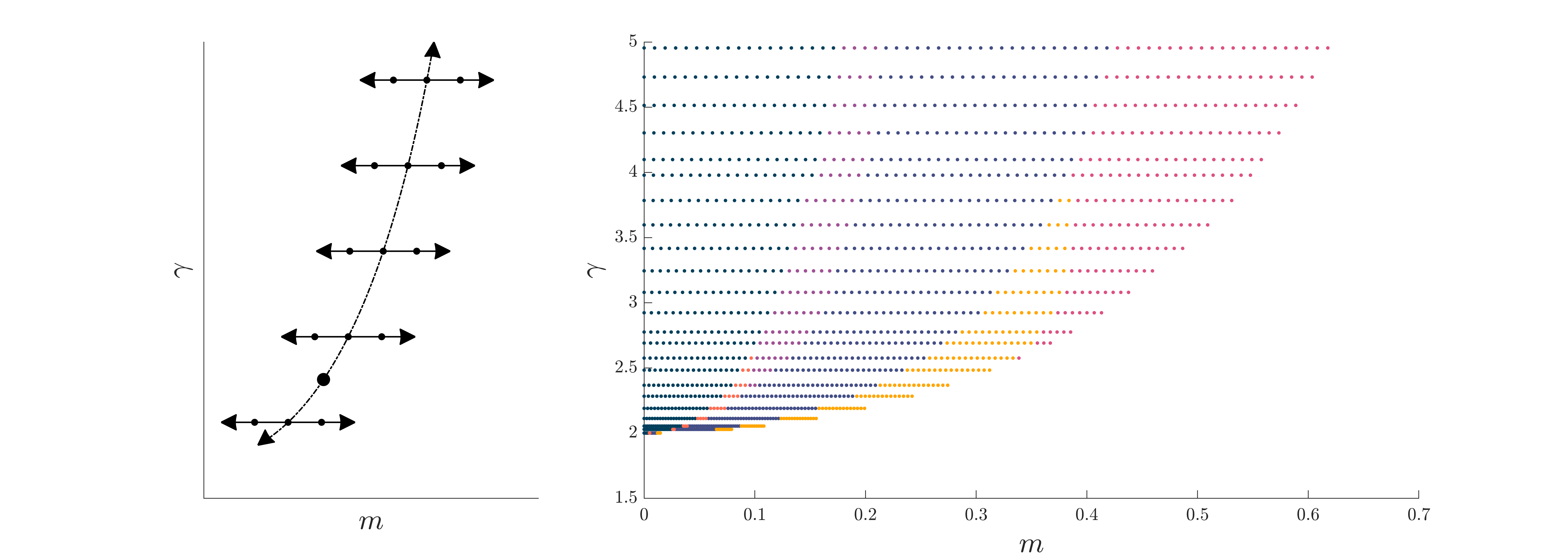}
    \caption{Left: Cartoon of the procedure used for generating candidate solutions. Right: Sample of the grid points onto which the energies were interpolated. The colors of the dots indicate the symmetry group of the lowest energy profile found at that point and match those in Figure \ref{fig:mainresult}.}
                           \label{Fig:Cartoon}
\end{figure}

\noindent{\bf Interval arithmetic}
We use the MatLab package Intlab~\cite{Intlab} whenever rigorously verifying any profiles. This means that all computational results are guaranteed to 
lie within a given interval and one needn't worry that the accumulation of floating point error has lead to inaccuracies. When we compute a quantity $Q$ with interval arithmetic we denote the lower bound on the exact value by $\rounddown Q$ and the upper bound by $\roundup Q$. We used interval arithmetic when verifying the energies to determine Figure \ref{Fig:RigEL}. In all cases potential errors due to using floating point arithmetic are much smaller than those due to finite truncation of the solution.\\

\noindent {\bf Computational strategy}
The mixed state ($u\equiv m$) changes linear stability along the curve
\[m^*(\gamma) = \sqrt{\frac{\gamma-2}{3\gamma}}.\]
This is the dashed curve in Figure \ref{fig:mainresult}. 
For fixed $\gamma -2 \ll 1$, a regular perturbation expansion at $(m, m^*(\gamma))$ allows the construction of branches of solutions valid for $|m-m^*| \ll 1$, see for instance \cite{CMW,vdBW3}. We 
 used direct energy minimization on the energy constrained to one dimensional structures (the lamellae) and each of space groups 17, 70, 194, 216, 224, 229 and 230 for $\gamma = 2.03$ and $m = .25m^*, .5 m^*$ and $.75m^*$. This produced approximately 15 candidate minimizers in each space group. The three lowest energy candidates from each group at each value of $m$ were then continued in $m$ with $\gamma = 2.03$ fixed to determine the local branch structure. In this region of the parameter space solutions can be very well approximated with a small number of modes. Comparing the results from a direct search method, a stochastic method and a gradient based approach for minimizing the energy we are confident that we found all the lowest energy solutions in each space group {\em at this value of} $\gamma$.

All branches but the ones from space group 70 (SG70) extended to $m=0$ and a maximum  $\le 1.5 m^*$ (cf.~Figure~\ref{Fig:EnergyMinimizingCurves}). The SG70 branches were observed to exist only for $0 < m_l(\gamma) \le m \le m_r(\gamma) < m^*$ for all values of $\gamma$ investigated.

At the initial $\gamma=2.05$ the energy curves within the computed space groups do not intersect. That is, if the double gyroid was the lowest energy profile for some $m$ amongst all solutions computed within SG230 then it was the profile with the lowest energy for all values of $m$ within SG230. For larger values of $\gamma$ we observed that this ordering was preserved except in some cases where no solution computed from that group was the global minimizer. For instance at $m=0$ spheres are not always the lowest energy solution seen from SG216.

We explored the parameter space by continuing a lamellar structure and the two or three solution profiles with the lowest energies from each space group as follows. The procedure is summarized in the left panel of Figure \ref{Fig:Cartoon}. We begin with the large spot, located at parameter values $\gamma=2.03$ and $m=0.5m^*(2.03)\approx 0.035$, where have found the a set of critical points as described above. These candidate minimizers were first continued in $\gamma$ along the curve $m = .5 m^*(\gamma)$. 
At many values $\gamma$ we then constructed solution curves using
pseudo-arclength continuation in $m$. This generated candidate profiles at different values of $m$ for each solution type. 
At each value of $\gamma$ the values of $m$ sampled were chosen adaptively by the continuation routine. 
The energies along levels of constant $\gamma$ were then interpolated onto a grid of $1000$ points from $m=0$ to the largest value of $m$ reached by any solution profile. The right panel presents every fifth value of $\gamma$ and every twentieth of $m$ (for illustration). This data 
provides starting points for refinements to obtain proofs. The spacing between the values of $\gamma$ used is smaller for smaller $\gamma$ and we also added some specific values to capture regions where three solutions have comparable energy.

To reduce the computational time we did not run the full proof at the more than 250000 computed profiles (although onerous it is not a difficult task). Instead, we identified values of $(m,\gamma)$ where the ordering of the lowest energy approximate solutions changed. There we refined in $m$, for fixed $\gamma$ finding values $m_a$ and $m_b$ at which we proved the existence of solutions such that the four energies satisfy
\[\roundup \E_a(m_a) + \E^{[2]}_a(m_a) < \rounddown \E_b(m_a) - \E^{[2]}_b(m_a) \quad\; \mbox{and} \quad \; \rounddown \E_a(m_b) - \E^{[2]}_a(m_b)  > \roundup \E_b(m_b) + \E^{[2]}_b(m_b)\]
for solutions from space groups $a$ and $b$.  Here $E^{[2]}$ is the energy bound derived in Section~\ref{s:energy}. In other words, the ordering of the minimal energies attained in space groups $a$ and $b$ changes somewhere in the interval $[m_a, m_b]$. We rigorously verified that the next closest in energy solutions truly have higher energy. Figure~\ref{Fig:RigEL} presents the center $m_c = (m_a+m_b)/2$ for all such points ($(m_b-m_a) < 10^{-3} m_c$). 
 We also computed the Morse index, as explained in Section~\ref{s:stability} to ensure that the energy minimizer amongst all candidates is definitely a local minimizer.
Figure~\ref{fig:mainresult} is derived directly from Figure~\ref{Fig:RigEL}
by coloring the different regions seperated by the (nearly) equal energy curves.

Points on the same boundary are connected only for visual assistance; we did not rigorously prove the existence of continuous curves (as opposed to the two dimensional case treated in~\cite{vdBW2}). The $\times$ symbols indicate points where we found approximate profiles and can compare their energies but
the proof was unsuccessful due to the memory requirements. 
As $\gamma$ increases, the interface between the positive and negative regions sharpens and more modes are required to construct an accurate approximation. 
All computations were performed on a consumer laptop. We have every confidence that the proof would be successful there if our algorithm were modified to be more memory efficient or we used a machine with more resources, but that is beyond the scope of the present work.

\begin{figure}
    \centering
        \includegraphics[width=.95\textwidth]{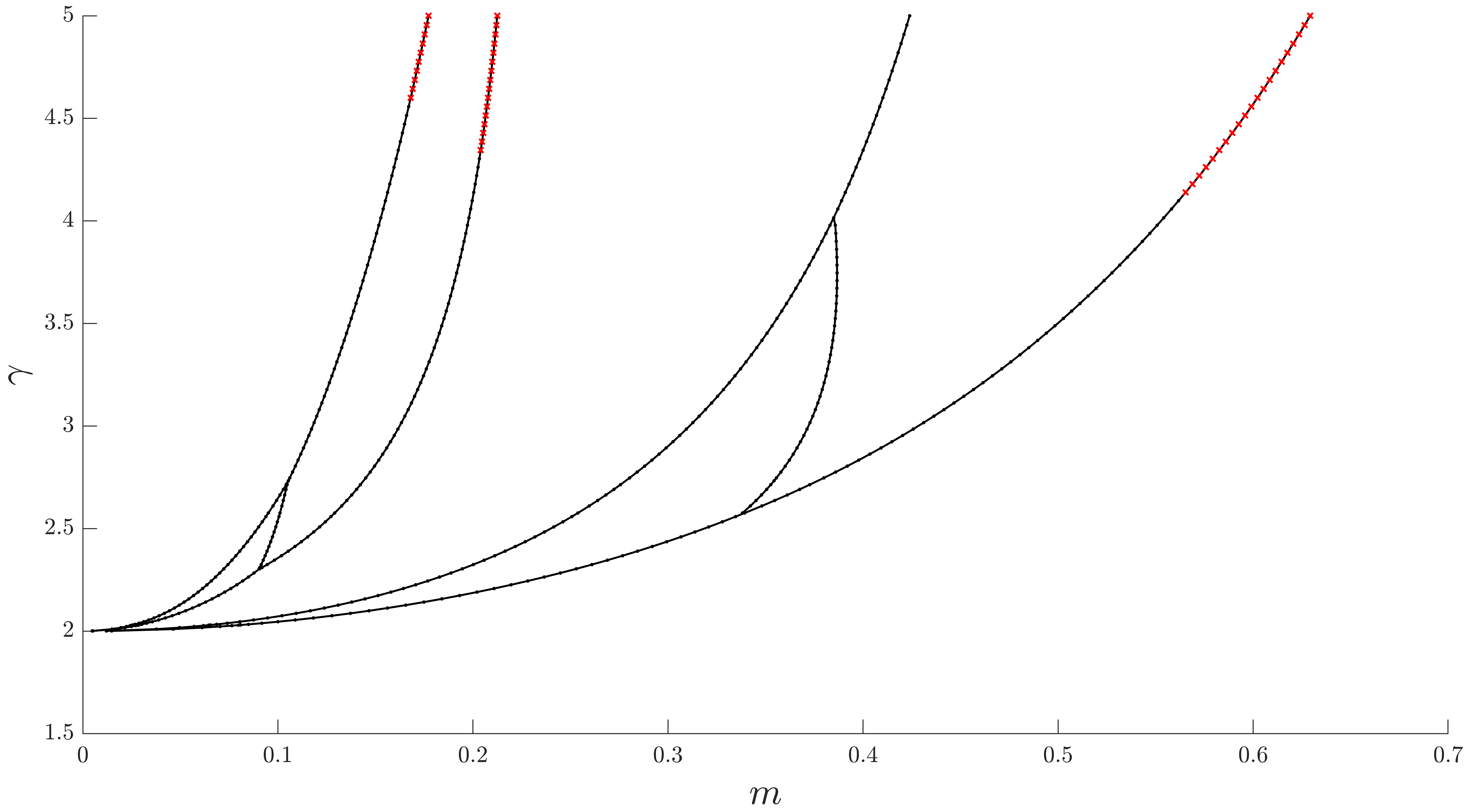}
    \caption{Boundaries in the energy landscape. Each dot represents a point $m_c = (m_a + m_b)/2$ such that, given solution profiles $a$ and $b$, $E_a(m_a) < E_b(m_a)$ and $E_a(m_b) > E_b(m_b)$ and $E_a$ and $E_b$ lower than the energies of all other computed solutions. The plotted points satisfy $m_c< 10^{-3} (m_a+m_b)$. The lines are visual aids and do not indicate that rigorous continuation was performed. Solution profiles and energy bounds rigorously established at all black dots. The proof was unsuccessful at the red $\times$'s.}
                           \label{Fig:RigEL}
\end{figure}

\subsection{Structure of solution branches for fixed $\gamma$.}
\label{subsec:branch}
Most solution branches bifurcate from the constant state at $m=m^*(\gamma)$.  Figure \ref{Fig:EnergyMinimizingCurves} shows solution branches for the energy minimizing phases at $\gamma = 2.5$. The left panel shows the energy and the right the length scales. Given how different the optimal length scales are we expect that the energy landscape could be quite different in confined or specified geometries.

\begin{figure}
    \centering
        \includegraphics[width=.95\textwidth]{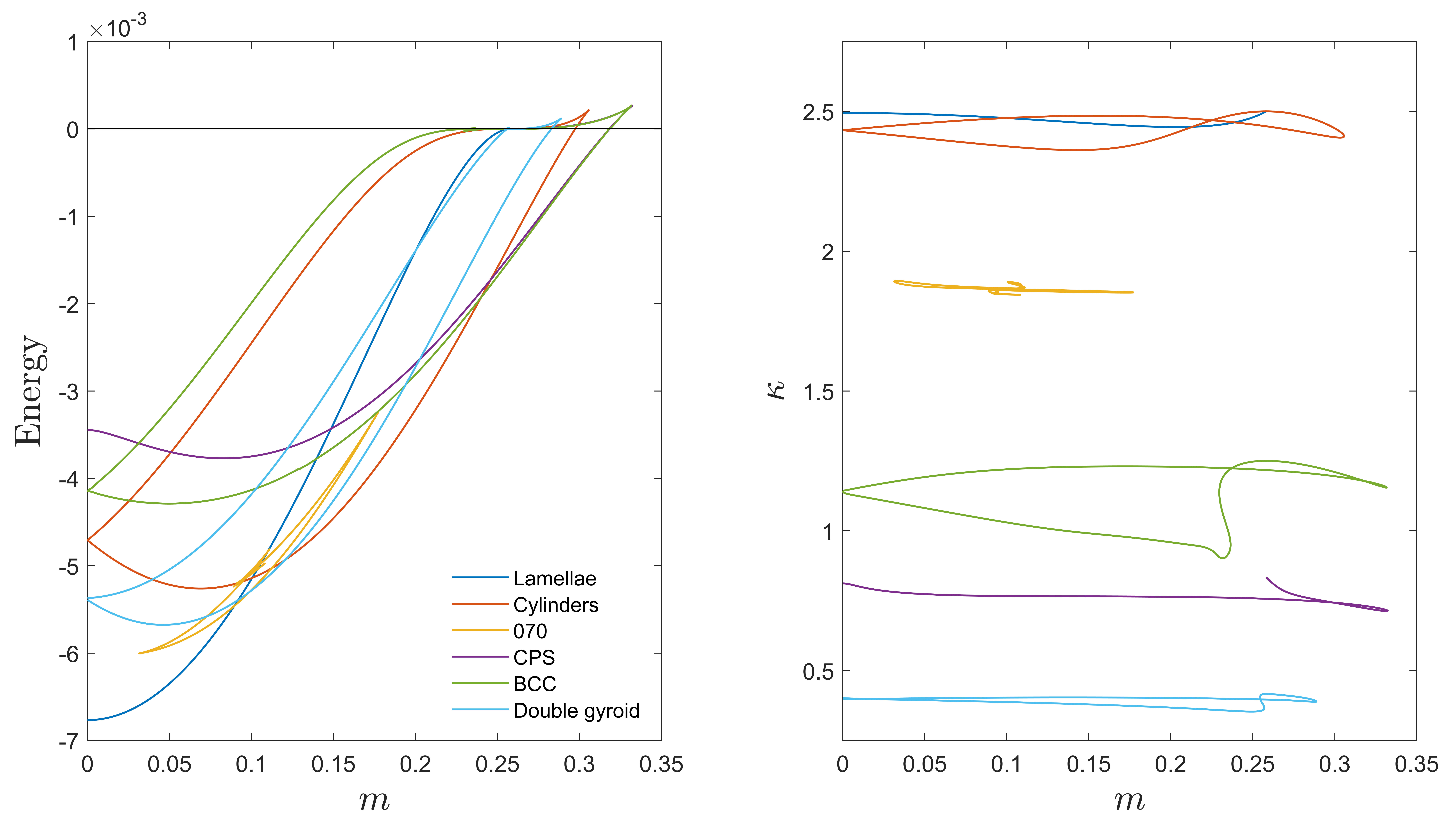}
    \caption{Branches of energy minimizing solutions with $\gamma = 2.5$. Left: energy. Right: smallest length scale.}
    \label{Fig:EnergyMinimizingCurves}
\end{figure}

Lastly, we compare the energies of the energy minimizing branches with other (non-minimizing) branches in Figure \ref{Fig:EnergyGroupComparisons}. This figure shows several features of the landscape: all groups can have branches that do not connect to the constant solution, the curves can be horrendously complicated and even non-minimizing branches can have energies very close to optimal. 
\begin{figure}
    \centering
        \includegraphics[width=.95\textwidth]{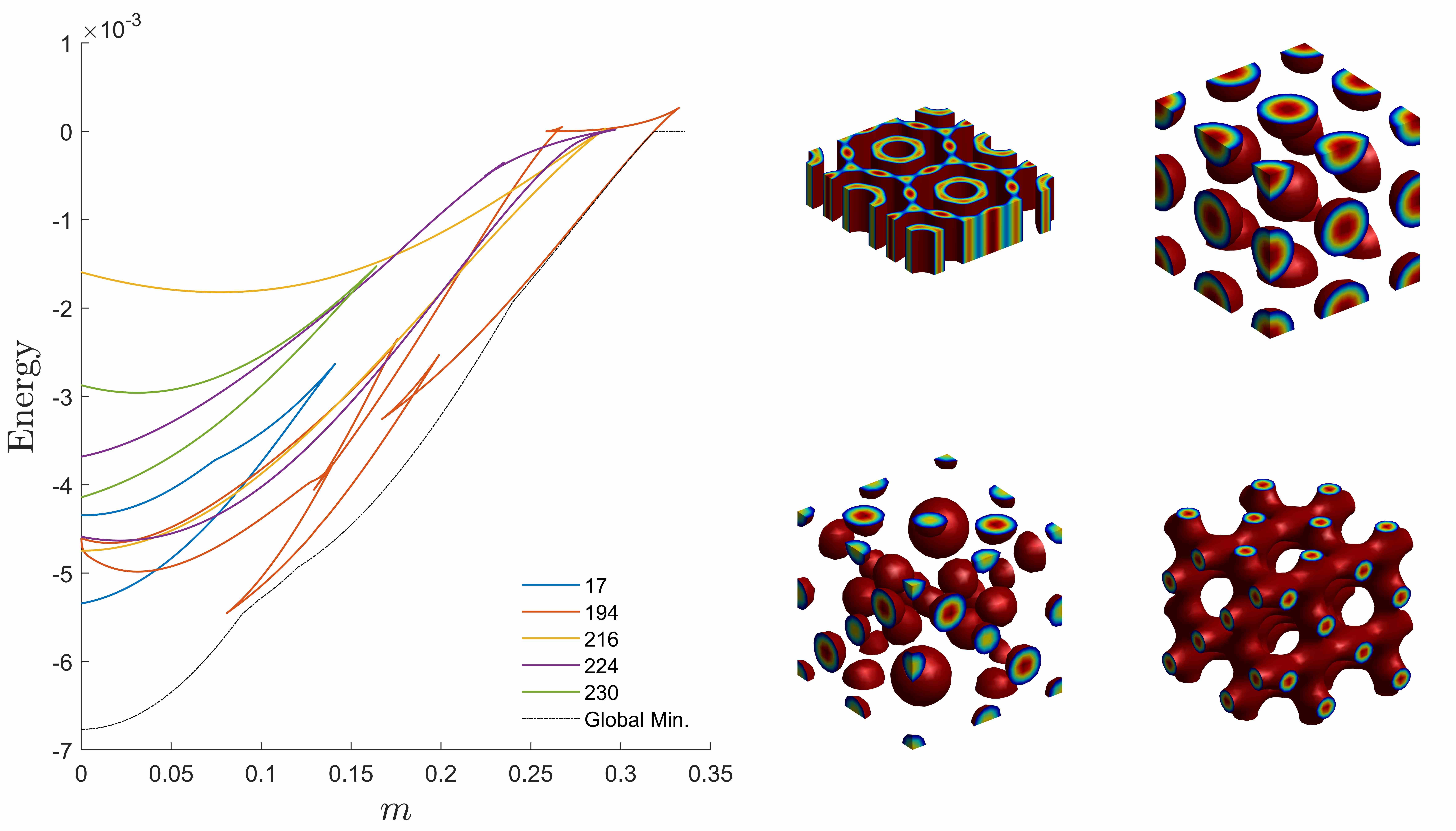}
    \caption{Non-minimizing branches. Left: the best SG194 and SG224 branches and the second best branches from SG17, SG216 and SG230. Right: sample profiles. Top left: SG17. Top right: 216. Bottom left: SG224. Bottom right: SG230.}
    \label{Fig:EnergyGroupComparisons}
\end{figure}

\subsection{The Morse index}
In addition to computing solutions and their energies, we also rigorously computed the Morse index as described in Section \ref{s:stability}. 
We computed the Morse index in both the space group and when imposing only minimal reflection symmetry (to factor out zero eigenvalues due to translational symmetry). Figure \ref{Fig:MorseDG} shows two branches of solutions in space group 230. The curves indicate which portion of the branches are stable both with and without full symmetry which is stable only when confined to the symmetry class and which is never locally stable. 

\begin{figure}
    \centering
        \includegraphics[width=.95\textwidth]{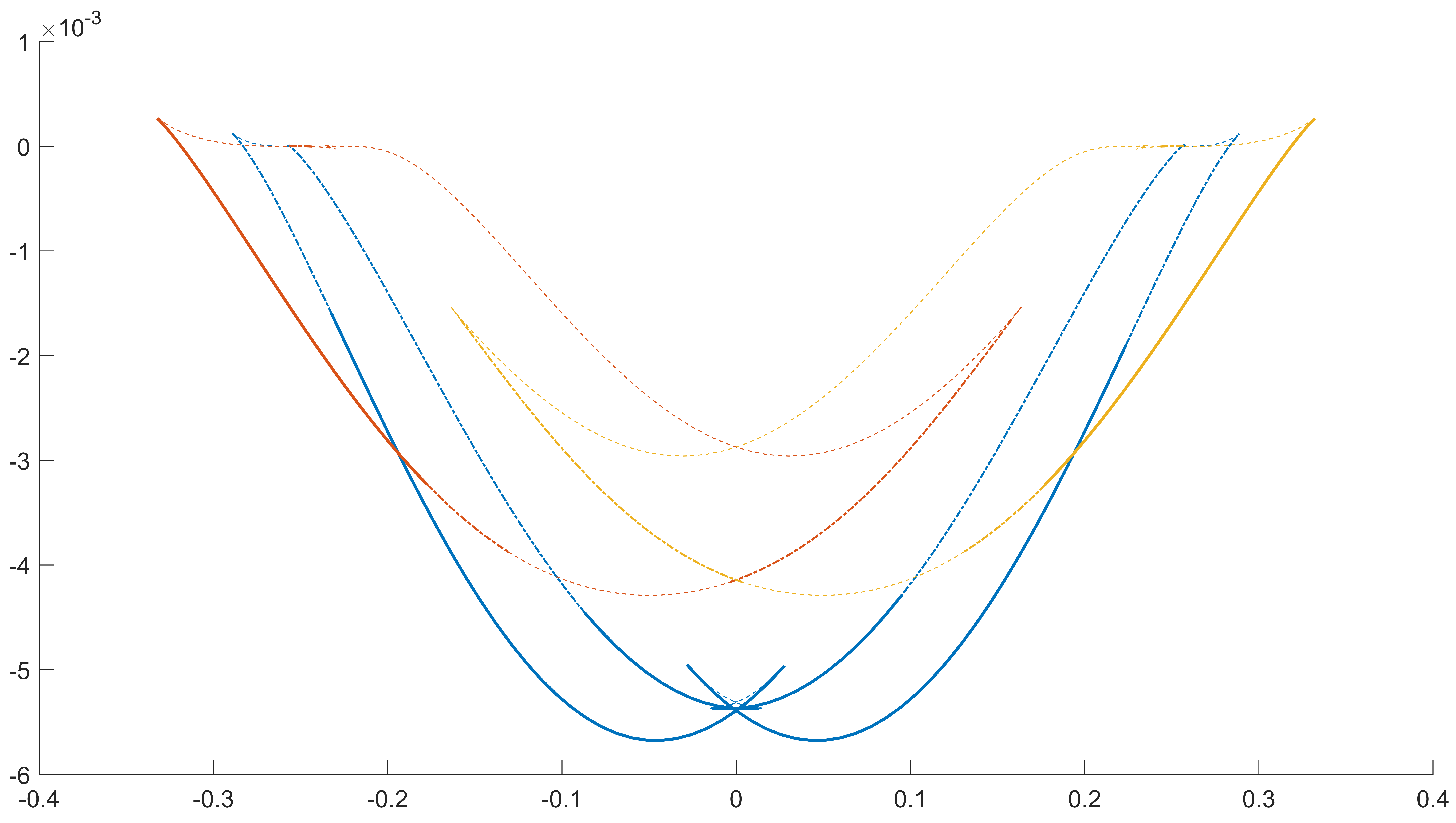}
    \caption{The stability on two branches of solutions from space group 230. Solid lines indicate stability even whilst imposing minimal symmetries, dashed-dotted portions are stable only within the symmetry class and dashed are not stable even within the symmetry class. The red and yellow curves are mirrors of each other with $m\to -m$. Within the symmetry class the Morse index is 0, 1, 2 or 3 whereas with minimal imposed symmetry there can be more than 100 unstable eigenvalues.}
    \label{Fig:MorseDG}
\end{figure}

\subsection{The perforated lamella}
\label{subsec:PL}
The perforated lamellae are a transient structure seen in physical experiments that is a local but not global minimizer. We computed two branches of solutions, one which connects the constant solution to lamellae and the other connects the constant solution to cylinders. Figures~\ref{Fig:EnergyPL} and~\ref{Fig:ExamplesPL} present comparisons of the energies and length scales of solutions from  SG194 with the essentially one dimensional lamellae and essentially two dimensional hexagonally packed cylinders. We can clearly see that the branches from SG194 extend from secondary bifurcations of lower dimensional structures to the constant solution. Rigorous analysis of such symmetry breaking bifurcations is subject of future work.
Notice that the branch of solutions connecting to the hexagonally packed cylinders have vertically aligned perforations whereas they are offset in those bifurcating from the lamellae. 
We computed the Morse index of perforated lamellae with both aligned and offset holes. Only those with offset perforations have Morse index 0 with minimal symmetry constraints, matching the fact that only offset perforations have been seen experimentally.
\begin{figure}
    \centering
        \includegraphics[width=.95\textwidth]{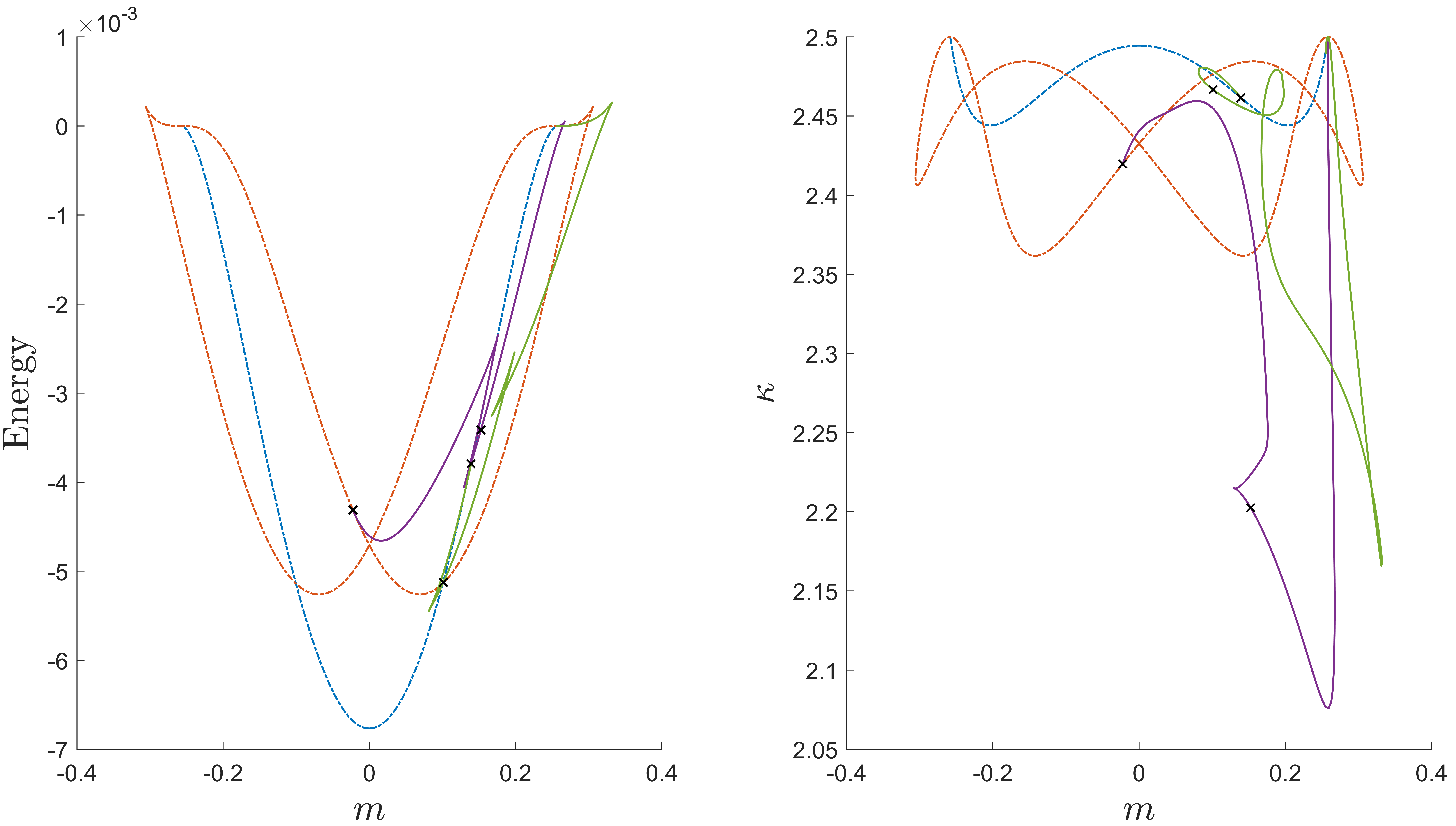}
    \caption{Solution branches in space group 194. Left: energy. The green branch connects the constant solution to lamellae (blue dashed curve) and the purple connects the constant solution to cylinders (red dashed curve). Right: length scales. Because the SG194 branch connects to a twice repeated lamella we have scaled $\kappa$ by 4. The $\times$'s correspond to the example profiles in Figure~\ref{Fig:ExamplesPL}.}
    \label{Fig:EnergyPL}
\end{figure}
\begin{figure}
    \centering
        \includegraphics[width=.95\textwidth]{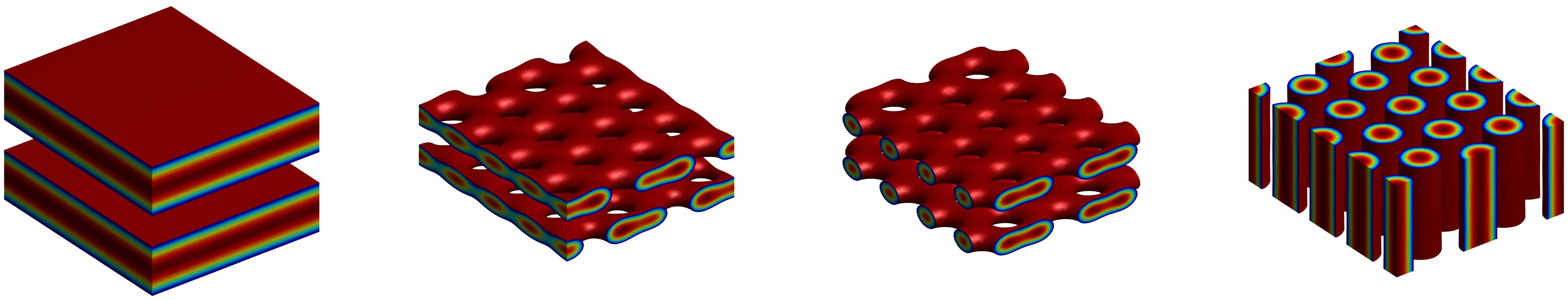}
    \caption{Solutions in space group 194. From the left: lamellae, perforated lamellae with offset perforations, a perforated lamellae with vertically aligned holes, cylinders.}
    \label{Fig:ExamplesPL}
\end{figure}

\subsection{Other exotic solutions}

\begin{figure}
    \centering
        \includegraphics[width=.95\textwidth]{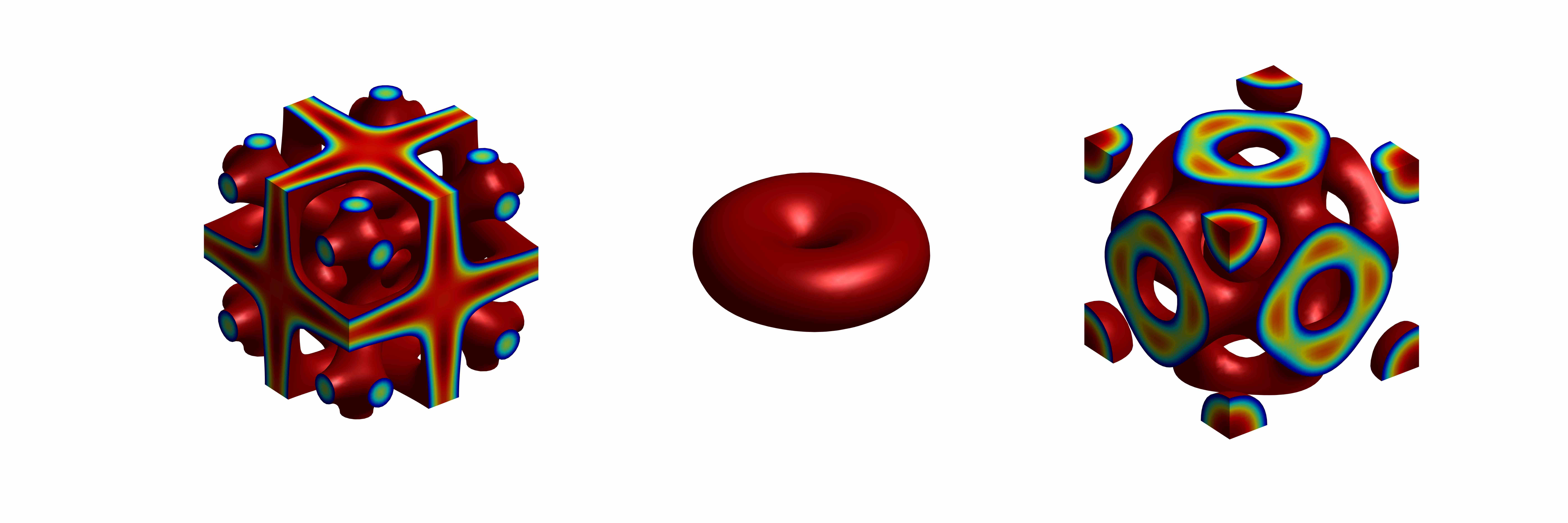}
    \caption{Exotic solution profiles. Left: Plumber's nightmare. $(m,\gamma) = (0.1114,2.5)$, SG 229. Center: Torus. $(m,\gamma) = (0.4394, 4)$, SG 194. Right: A continuous surface separating spheres. $(m,\gamma) = (0.1291, 2.5)$, SG 224.}
    \label{Fig:Exotic}
\end{figure}

By continuing a large number of solutions from many different symmetry classes we are able to identify structures not routinely seen experimentally. Figure \ref{Fig:Exotic} presents some of these. The left is the so-called Plumber's Nightmare with tubes inside tubes, the middle is tori stacked in hexagonally packed cylinders (we depicted just one torus) and the last is spheres distributed on a BCC lattice within a continuous frame. The Plumber's Nightmare is known to exist in other nano-materials but has never been seen in diblock copolymer experiments.

\subsection{Conclusions}

As already discussed extensively in the introduction, we are not claiming to have proven the global minimizers. Instead we have developed a methodology for comparing energies of stationary periodic profiles with any three dimensional space group symmetry, which are proven to be local minimizers with respect to variations in both the profile and the length scales. This is based on a carefully selected  functional analytic setup which respects both symmetry operations and variational properties (allowing a validated Morse index computation). It results in excellent rigorous bounds on all truncation errors, in particular concerning the energy of critical points, so that we can decide on the energy ordering of local minimizers. 

Much remains to be investigated. One issue already mentioned are symmetry breaking bifurcations. Another is to investigate other models in material science with similar mathematical structure where there are also questions regarding energy minimizing phases such as Phase Field Crystals \cite{PFCReview2012,Graphene}, the Functionalized Cahn-Hilliard energy \cite{PromislowFCH} and ternary block copolymers \cite{OhtaNkazawa,REN2003103}. In certain cases this may also motivate a more detailed study of profiles in space groups that allow for continuous symmetries. Asymptotics in the limit $\gamma \to 2$ are a final topic left for future work.

{\small
\bibliographystyle{plain}
\bibliography{OK-references}
}

\appendix
\section{The estimates}
\label{sa:estimates}

\subsection{Auxiliary lemmas}
\label{s:boundP}

We first list the estimates needed to control several tail terms. These are dominated (for large $k$) by the inverse of the linear part of the equation, which is characterized by
\[
  P(y) = \frac{1}{\gamma^2} \,\! y  - 1 + \frac{1}{y} .
\]
We define, for $K >y_P^{1/2}$ and $r_1 \geq 0$,
\begin{alignat}{1}
  \CC^{[0]}(K) & \bydef \frac{1}{P(K^2)}.
  \label{e:CC0} 
   \\
  \CC^{[1]}(K, r_1) & \bydef 
  \begin{cases}
	  \frac{\gamma^{-2} K^2}{P(K^2)} 
	  & \quad\text{for } \gamma \leq 2,\\[1mm]
	      \frac{K^2P'(K^2(1+r_1)) }{P(K^2)} 	    
		    & \quad\text{for } \gamma > 2.
  \end{cases}
   \label{e:CC1} 
\end{alignat}
\begin{lemma}\label{l:Pestimate}
Let $\bkappa \in \R^J_+$ and  $K^2 > \yP$. 
Then 
\begin{equation}\label{e:C0ineq}
   \frac{1}{|P(\Dk \bkappa)|} 
    \leq \CC^{[0]}(K)
	\qquad \text{for all } \knorm > K.
\end{equation}
Let $\bkappa \in \R^J_+$ and  $K^2 > \max\{ \yP ,2 \}$. 
Assume that $r_1 \in [0,1)$ is such that $K^2(1-r_1) \geq \gamma$. 
Then
\begin{equation}\label{e:C1ineq}
  \sup_{|\kappa-\bkappa|_{\RJ} \leq r_1} \frac{|P'(\Delta_k \kappa )|\, \Dk \bkappa}{|P(\Delta_k \bkappa)|}
    \leq \CC^{[1]}(K, r_1)
	\qquad \text{for all } \knorm > K  .
\end{equation}
\end{lemma}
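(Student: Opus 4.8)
The plan is to treat the two bounds separately; the first is a monotonicity statement, the second a one–variable estimate on the function $P(y)=\gamma^{-2}y-1+y^{-1}$, for which $P'(y)=\gamma^{-2}-y^{-2}$ and $P''(y)=2y^{-3}>0$. For \eqref{e:C0ineq}, recall from \eqref{e:knorm} that $\knorm>K$ means exactly $\Dk\bkappa>K^2$, and $K^2>\yP$ by hypothesis. Since $P$ is positive and strictly increasing on $(\yP,\infty)$ by \eqref{e:PyP1}--\eqref{e:PyP2}, we get $P(\Dk\bkappa)\ge P(K^2)>0$, hence $1/|P(\Dk\bkappa)|=1/P(\Dk\bkappa)\le 1/P(K^2)=\CC^{[0]}(K)$.

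For \eqref{e:C1ineq} I would first localise $\Delta_k\kappa$. Since each $\Delta_k^j\ge0$ and, by \eqref{e:normFJ}, $|\kappa-\bkappa|_{\RJ}\le r_1$ forces $|\kappa_j-\bkappa_j|\le r_1\bkappa_j$, one obtains $(1-r_1)\Dk\bkappa\le\Delta_k\kappa\le(1+r_1)\Dk\bkappa$, and in particular $\Delta_k\kappa\ge(1-r_1)K^2\ge\gamma$ by the hypothesis $K^2(1-r_1)\ge\gamma$. Consequently $P'(\Delta_k\kappa)=\gamma^{-2}-(\Delta_k\kappa)^{-2}\in[0,\gamma^{-2}]$, so the absolute value in \eqref{e:C1ineq} may be dropped. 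Writing $y=\Dk\bkappa>K^2$, in the case $\gamma\le2$ we bound $P'(\Delta_k\kappa)\le\gamma^{-2}$ and it remains to show $y\mapsto y/P(y)$ is nonincreasing on $(K^2,\infty)$; this follows from $\frac{d}{dy}\bigl(y/P(y)\bigr)=\bigl(P(y)-yP'(y)\bigr)/P(y)^2=(2y^{-1}-1)/P(y)^2<0$ for $y>2$ together with $K^2>2$, giving $y/P(y)\le K^2/P(K^2)$ and hence the bound $\gamma^{-2}K^2/P(K^2)=\CC^{[1]}(K,r_1)$.

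In the case $\gamma>2$, since $P''>0$ the derivative $P'$ is increasing, so $P'(\Delta_k\kappa)\le P'((1+r_1)y)$ and the left-hand side of \eqref{e:C1ineq} is at most $\psi(y):=yP'((1+r_1)y)/P(y)$. Clearing denominators gives $\psi(y)=(y^2-a)/(y^2-\gamma^2y+\gamma^2)$ with $a:=\gamma^2(1+r_1)^{-2}$, and the sign of $\psi'$ equals that of $q(y):=\gamma^2y^2-2(\gamma^2+a)y+a\gamma^2$. The plan is to show $q>0$ on $(K^2,\infty)$, so that $\psi$ is decreasing there and $\psi(y)\le\psi(K^2)=K^2P'((1+r_1)K^2)/P(K^2)=\CC^{[1]}(K,r_1)$. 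To this end, note the vertex of the upward-opening parabola $q$ lies at $1+a\gamma^{-2}\le2$, whereas for $\gamma>2$ one has $\yP\ge\gamma>2$; and since $P(\yP)=0$ means $\yP$ solves $y^2-\gamma^2y+\gamma^2=0$ (i.e.\ $\yP^2=\gamma^2\yP-\gamma^2$), a short substitution yields $q(\yP)=\yP(\gamma^4-2\gamma^2-2a)-\gamma^2(\gamma^2-a)$. This is positive because $a\le\gamma^2$ and $\gamma>2$ make $\gamma^4-2\gamma^2-2a\ge\gamma^2(\gamma^2-4)\ge0$, and then $\yP\ge\gamma$ gives $q(\yP)\ge\gamma(\gamma-2)\bigl(\gamma^2(\gamma+1)+a\bigr)>0$. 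Hence $\yP$ (which lies to the right of the vertex) is to the right of both roots of $q$, so $q>0$ on $(\yP,\infty)\supset(K^2,\infty)$, as wanted.

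I expect the monotonicity of $\psi$ in the case $\gamma>2$ to be the only real obstacle: it is essentially a comparison of the roots of the two quadratics $q$ and $y^2-\gamma^2y+\gamma^2$, and evaluating $q(\yP)$ combined with the vertex-location check is the cleanest route I see, requiring one to keep careful track of all the standing hypotheses ($a\le\gamma^2$, $\gamma>2$, $K^2>\yP$, $K^2(1-r_1)\ge\gamma$). Everything else — the $\gamma\le2$ case and \eqref{e:C0ineq} — is routine once the localisation of $\Delta_k\kappa$ above is in place.
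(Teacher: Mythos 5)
Your proof is correct, and for the first bound~\eqref{e:C0ineq} and for the case $\gamma\leq 2$ of~\eqref{e:C1ineq} it coincides with the paper's argument (monotonicity of $P$ and of $P(y)/y$ past~$y_P$ and~$2$, respectively). Where your route genuinely diverges is the crux case $\gamma>2$. The paper substitutes $w=y^{-1}$ and $\epsilon=(1+r_1)^{-2}$, rewrites the quantity to be bounded as $(\gamma^{-2}-\epsilon w^2)/(\gamma^{-2}-w+w^2)$, and shows the $w$-derivative is nonnegative by a completing-the-square regrouping of the numerator, which manifestly uses $w\leq\tfrac12$, $\epsilon\in[0,1]$, $\gamma\geq 2$. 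You instead stay in the $y$-variable, clear denominators to write $\psi(y)=(y^2-a)/(y^2-\gamma^2y+\gamma^2)$ with $a=\gamma^2(1+r_1)^{-2}$, reduce the monotonicity to the sign of the quadratic $q(y)=\gamma^2y^2-2(\gamma^2+a)y+a\gamma^2$, and conclude by a root-location argument: the vertex $1+a\gamma^{-2}\leq 2$ lies to the left of $y_P$, and $q(y_P)>0$ after eliminating $y_P^2$ via $y_P^2=\gamma^2y_P-\gamma^2$. Both arguments establish the same monotonicity of the same rational function, but yours replaces the paper's regrouping with an explicit quadratic-discriminant-style analysis anchored at $y_P$; an advantage of your version is that positivity of the relevant expression is localised to $(y_P,\infty)$ rather than proved on the larger set $w\in(0,\tfrac12]$, which makes transparent exactly which hypotheses enter; a disadvantage is the extra algebra needed to evaluate $q(y_P)$.

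One small slip worth fixing: you write that ``the sign of $\psi'$ equals that of $q$,'' but a direct computation gives the numerator of $\psi'$ as $-q(y)$, so the sign of $\psi'$ is the \emph{opposite} of that of $q$. Your subsequent reasoning (``$q>0$ on $(K^2,\infty)$, so $\psi$ is decreasing there'') is consistent with the correct sign relation, so the slip is purely in the intermediate sentence and does not affect the conclusion; it should nonetheless be corrected.
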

\begin{proof}
Since $\knorm= \sqrt{\Dk \bkappa}$ the estimate~\eqref{e:C0ineq} follows directly from~\eqref{e:PyP1} and~\eqref{e:PyP2}.

The proof of the second estimate is more involved.
In view of \eqref{e:knorm} we write 
\[ 
  y=\Dk \bkappa=\knorm^2
\]
It follows from~\eqref{e:normFJ} that for any $\kappa$ satisfying $|\kappa-\bkappa|_{\RJ} \leq r_1 < 1$
we have
\[
  \kappa_j = \bkappa_j \left(1+ \frac{\kappa_j-\bkappa_j}{\bkappa_j} \right)
  \in  [\bkappa_j (1-r_1) , \bkappa_j (1+r_1)] \qquad\text{for }  1\leq j \leq J.
\]
Hence $|\kappa-\bkappa|_{\RJ} \leq r_1 < 1$ implies that
\begin{equation}\label{e:squeeze}
  (1-r_1)y= (1-r_1) \Delta_k \bkappa \leq  \Delta_k \kappa \leq (1+r_1) \Delta_k  \bkappa =  (1+r_1)y .
\end{equation}
We observe that $P'(y)\geq 0$ for $y \geq K^2(1-r_1) \geq \gamma$.
Since $y>y_P$ we use~\eqref{e:PyP1} and~\eqref{e:squeeze}
to reformulate~\eqref{e:C1ineq} as a uniform estimate on
\[
  \frac{P'( y (1+s)  ) \,y }{P(y)}  \qquad \text{ for } y > K^2 \text{ and } 
  s \in [-r_1, r_1].
\]
We note that $P''(y) = 2y^{-2} >  0$ for all $y>0$, so that it suffices to estimate
\[
 \frac{P'( y (1+r_1)  ) \,y }{P(y)}  \qquad \text{ for } y > K^2.
\]
We split the analysis into two cases: $\gamma>2$ and $\gamma \leq 2$.
For the latter case we simply observe that $P'(y) \leq \gamma^{-2}$ for any $y>0$ and estimate
\[
   \frac{P'( y (1+r_1)  ) \,y }{P(y)} \leq \frac{\gamma^{-2} K^2}{P(K^2)} ,
\]
where we have used that $K^2 \geq 2$ to conclude that $P(y)/y$ is increasing for $y \geq K^2$.

For the case $\gamma>2$ we observe that $y_P>\gamma>2$.
We write $w=y^{-1} < K^{-2} \leq \frac{1}{2}$ and $\epsilon = (1+r_1)^{-2} \in (\frac{1}{4},1]$,
so that 
\begin{equation}\label{e:reformulatePPy}
  \sup_{y>K^2} \frac{P'( y (1+r_1)  ) \,y }{P(y)} =  \sup_{0<w<K^{-2}} \frac{\gamma^{-2}- \epsilon w^2}{\gamma^{-2}-w+w^2} .
\end{equation}
To determine the supremum
in the right hand side we determine the sign of the derivative
\begin{alignat*}{1}
  \frac{d}{d w} \frac{\gamma^{-2}- \epsilon w^2}{\gamma^{-2}-w+w^2}
  & = \frac{-2\epsilon w (\gamma^{-2}-w+w^2) -(-1+2w)(\gamma^{-2}- \epsilon w^2)}{(\gamma^{-2}-w+w^2)^2} \\
  & =  \frac{ (1-\epsilon)\gamma^{-2} (1-2w) + 
    + \epsilon[ (w-2\gamma^{-2})^2+ \gamma^{-4}(\gamma^2-4)]   }{(\gamma^{-2}-w+w^2)^2} \\
  & \geq 0.
\end{alignat*}
Here we have grouped terms in the numerator conveniently to conclude that it is positive for any $w \in [0,\frac{1}{2}]$ and $\epsilon \in [0,1]$.
Since the derivative of the argument in the right hand side of~\eqref{e:reformulatePPy} is positive, the supremum is attained at $w=K^{-2}$ or, equivalently, at $y=K^2$. This concludes the proof of the bound~\eqref{e:C1ineq}.
\end{proof}

We define, for $\nu>1$, $K>0$ and $r_1 \in [0,1)$,
\begin{alignat}{1}
  \EE^{[0]} (K,\nu,r_1) &\bydef  (1+r_1) \EE^{[1]},\\
  \EE^{[1]} (K,\nu) &\bydef
  \begin{cases}
	\gamma^{-2} (e \log \nu)^{-2}  &\quad\text{for } K \leq (\log \nu)^{-1} \\
    \gamma^{-2} K^2 \nu^{-2K}   &\quad\text{for } K > (\log \nu)^{-1}	  ,
  \end{cases}
\label{e:defE1j} \\
  \EE^{[2]} (K,\nu,r_1) &\bydef 
	 2 (1-r_1)^{-3} K^{-2} \nu^{-2K}.	  
\label{e:defE2j}	
\end{alignat}	
\begin{lemma}\label{l:Eterm}
Let $\bkappa \in \R^J_+$ and  $K > 0$. 
Assume that $r_1 \in [0,1)$ is such that $K^2(1-r_1) \geq \max\{1,\gamma\}$. 
Then
\begin{alignat}{2}
\sup_{|\kappa-\bkappa|_{\RJ} \leq r_1} 
  | P(\Delta_k \kappa)|  \,  \nu^{-2\knorm}
 &\leq \EE^{[0]} (K,\nu,r_1)
 &&\qquad\text{for all } \knorm > K, \label{e:P0E0} \\
\sup_{|\kappa-\bkappa|_{\RJ} \leq r_1} 
  | P'(\Delta_k \kappa)|  \, (\Dk \bkappa) \, \nu^{-2\knorm}
 &\leq \EE^{[1]} (K,\nu)
 &&\qquad\text{for all } \knorm > K, \label{e:P1E1} \\
\sup_{|\kappa-\bkappa|_{\RJ} \leq r_1}  | P''(\Delta_k \kappa) | \, (\Dk \bkappa)^2  \, \nu^{-2\knorm}
 &\leq \EE^{[2]} (K,\nu,r_1)
 &&\qquad\text{for all } \knorm > K. \label{e:P2E2}
\end{alignat}
\end{lemma}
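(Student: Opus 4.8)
\emph{Proof strategy.} The plan is to reduce each of the three estimates to a one--variable supremum over $t=\knorm\in[K,\infty)$, using two ingredients. First, the squeeze already established in the proof of Lemma~\ref{l:Pestimate}: for $|\kappa-\bkappa|_{\RJ}\leq r_1$,
\[
  (1-r_1)\,\knorm^2 \;\leq\; \Dkk \;\leq\; (1+r_1)\,\knorm^2 ,
\]
since $\knorm^2=\Dk\bkappa$. Second, elementary monotonicity of $t\mapsto t^a\nu^{-2t}$: as $\nu>1$, the function $g(t)=t^2\nu^{-2t}=t^2e^{-2t\log\nu}$ is increasing on $(0,(\log\nu)^{-1})$ and decreasing on $((\log\nu)^{-1},\infty)$, with maximum value $g((\log\nu)^{-1})=(e\log\nu)^{-2}$, whereas $t\mapsto t^{-2}\nu^{-2t}$ is strictly decreasing on $(0,\infty)$. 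Hence $\sup_{t\geq K}g(t)$ equals $g(K)=K^2\nu^{-2K}$ if $K>(\log\nu)^{-1}$ and $(e\log\nu)^{-2}$ otherwise --- which is exactly $\gamma^{2}\EE^{[1]}(K,\nu)$ --- while $\sup_{t\geq K}t^{-2}\nu^{-2t}=K^{-2}\nu^{-2K}$.

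I would dispatch the two easy cases first. For~\eqref{e:P2E2}, since $\Dkk\geq(1-r_1)\knorm^2>0$ we get $|P''(\Dkk)|(\Dk\bkappa)^2=2(\Dkk)^{-3}\knorm^4\leq2(1-r_1)^{-3}\knorm^{-2}$; multiplying by $\nu^{-2\knorm}$ and using the monotone case gives $\EE^{[2]}(K,\nu,r_1)$. For~\eqref{e:P1E1}, the hypothesis $K^2(1-r_1)\geq\max\{1,\gamma\}$ gives $\Dkk\geq\gamma$, hence $(\Dkk)^{-2}\leq\gamma^{-2}$ and $0\leq P'(\Dkk)=\gamma^{-2}-(\Dkk)^{-2}\leq\gamma^{-2}$, so $|P'(\Dkk)|\,\Dk\bkappa\leq\gamma^{-2}\knorm^2$; multiplying by $\nu^{-2\knorm}$ and using the $g$ case gives $\EE^{[1]}(K,\nu)$.

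The estimate~\eqref{e:P0E0} requires care. I would prove the pointwise bound $|P(\Dkk)|\leq(1+r_1)\gamma^{-2}\knorm^2$, from which~\eqref{e:P0E0} follows exactly as above with the prefactor $1+r_1$, i.e.\ $\EE^{[0]}=(1+r_1)\EE^{[1]}$. There are two cases. If $\Dkk\geq\yP$ then $P(\Dkk)\geq0$ by~\eqref{e:PyP1}, and $\Dkk\geq1$ gives $P(\Dkk)\leq\gamma^{-2}\Dkk\leq(1+r_1)\gamma^{-2}\knorm^2$. If instead $\Dkk<\yP$ --- possible only when $\gamma>2$, and then $\Dkk\in[\gamma,\yP)$ --- then $-P$ is nonnegative and concave on that interval, attaining its maximum $1-2/\gamma$ at $y=\gamma$, so $|P(\Dkk)|\leq1-2/\gamma$; invoking the standing restriction $K^2>\yP$ from~\eqref{e:KyP} together with the elementary inequality $1-2/\gamma\leq\gamma^{-2}\yP$ (equivalently $\sqrt{1-4/\gamma^2}\geq1-4/\gamma$, which holds for all $\gamma>2$) yields $|P(\Dkk)|\leq\gamma^{-2}\yP<\gamma^{-2}K^2\leq(1+r_1)\gamma^{-2}\knorm^2$.

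The main obstacle is precisely this last case analysis: since $P$ can be negative for moderate arguments when $\gamma>2$, one cannot simply bound $|P(\Dkk)|$ by $\gamma^{-2}\Dkk$, and obtaining the clean constant $(1+r_1)\EE^{[1]}$ --- rather than $\EE^{[1]}$ plus an additive $O(\nu^{-2K})$ correction --- hinges on the comparison $1-2/\gamma\leq\gamma^{-2}\yP$ combined with $K^2>\yP$. Everything else is a routine reuse of the squeeze from Lemma~\ref{l:Pestimate} and the one--variable optimization of $t^a\nu^{-2t}$ recorded in the first paragraph.
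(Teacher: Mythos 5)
Your proof is correct and follows the same strategy as the paper's --- squeeze $\Dkk$ between $(1\pm r_1)\knorm^2$, bound the $P$-factor pointwise, and finish with the one-variable optimizations of $t^2\nu^{-2t}$ and $t^{-2}\nu^{-2t}$. Your treatments of~\eqref{e:P1E1} and~\eqref{e:P2E2} match the paper exactly.

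Your handling of~\eqref{e:P0E0} is where you add genuine value. The paper's argument simply observes $\Dkk\geq(1-r_1)K^2\geq1$ and concludes ``$P(\Dkk)\leq\gamma^{-2}\Dkk$'', then refers back to the $P'$-argument. As you correctly point out, this bounds $P$ from above but not $|P|$, and for $\gamma>2$ the quantity $P(\Dkk)$ can indeed be negative: the lemma's hypothesis only guarantees $\Dkk>\gamma$, which leaves $\Dkk\in(\gamma,\yP)$ open, and there $-P$ can be as large as $1-2/\gamma$, which for $\gamma>3$ exceeds $\gamma^{-2}\Dkk$ at $\Dkk$ near $\gamma$. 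Your two-case argument, using concavity of $-P$ plus the algebraic inequality $1-2/\gamma\leq\gamma^{-2}\yP$ (equivalently $1-4/\gamma\leq\sqrt{1-4/\gamma^2}$, valid for all $\gamma>2$) together with the standing restriction $K^2>\yP$ from~\eqref{e:KyP}, closes this gap cleanly and yields exactly the prefactor $(1+r_1)$ that the stated constant $\EE^{[0]}=(1+r_1)\EE^{[1]}$ requires. One small caveat worth making explicit: the ingredient $K^2>\yP$ is not among the hypotheses listed in the lemma (which only assumes $K^2(1-r_1)\geq\max\{1,\gamma\}$), so strictly speaking your proof --- like the paper's applications of the lemma --- relies on that standing computational restriction; it would be cleaner to add $K^2>\yP$ to the lemma's assumptions.
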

\begin{proof}
Let $\knorm>K$.	We start with the second bound. 
As in the proof of Lemma~\ref{l:Pestimate}, see~\eqref{e:squeeze}, we infer that 
$\Delta_k \kappa \geq (1-r_1) \Delta_k \bkappa$ 
whenever $|\kappa-\bkappa|_{\RJ} \leq r_1 <1$. 
By assumption $K^2(1-r_1) \geq \gamma$,
hence for the second estimate we now simply bound
$ 0 \leq P'(\Delta_k \kappa) < \gamma^{-2} $
for all $\kappa$ such that $|\kappa-\bkappa|_{\RJ} \leq r_1 <1$,
and all $k$ such that $\Delta_k \bkappa = \knorm^2 >K^2$. 
It then suffices to estimate $z^2 \nu^{-2z}$ uniformly for $z>K$. Such a bound is provided by $\gamma^2 \mathcal{E}^{[1]} (K,\nu) $. This concludes the proof of~\eqref{e:P1E1}.
 
For the first estimate we observe that 
\[
  \Delta_k \kappa   \geq   (1-r_1) \Delta_k \bkappa   =  
  (1-r_1) \knorm^2 > (1-r_1) K^2  \geq 1 ,
\]
whenever $|\kappa-\bkappa|_{\RJ} \leq r_1 <1$.
Hence $P(\Dk \kappa) \leq \gamma^{-2}\Dk \kappa \leq (1+r_1)  \gamma^{-2} \Dk \bkappa$. The remainder of the proof of~\eqref{e:P0E0} then follows directly from the arguments used in the previous estimate.
 
 For the third bound we estimate 
 \[
   0 \leq P''(\Delta_k \kappa) =  2 (\Delta_k \kappa)^{-3} \leq 2 (1-r_1)^{-3}
   (\Delta_k \bkappa)^{-3} .
 \]
 Since $\Delta_k \bkappa =  \knorm^2$
 it suffices to estimate $z^{-2} \nu^{-2z}$ uniformly for $z>K$.  
 It is now straightforward to derive 
 the estimate~\eqref{e:P2E2}. 
\end{proof}


\subsection{The bounds $\YY{i}$}
\label{s:boundY}

To obtain the $\YY{i}$-bounds satisfying~\eqref{e:boundY1}--\eqref{e:boundY2} we argue as follows.
Since $\bb \in \X_0^K$ 
we can compute all non-vanishing components of 
$\FF(\bb,\bkappa)$ with interval arithmetic.
In particular
\[
F_k(\bb,\bkappa) =0 \qquad\text{for all } \kk > 3K,
\]
since the nonlinearity is cubic.
Then the computation of the norms $| \pi_\kappa \AA \FF(\bx) |_{\FJ}$ 
and $ \| \pi_b \AA \FF(\bx) \|_{\X_0}$ 
takes a finite number of operations.
Taking the upper boundaries of the obtained intervals, denoted by $\roundup$, gives $\YY{1}$ and $\YY{2}$, i.e.,
\[
  \YY{1} = \roundup
  \left| A_{11} H(\bb,\bkappa) + A_{12} \pi_K F(\bb,\bkappa) \right|_{\FJ}  ,
\]
and
\[
  \YY{1} = \roundup \left[  
  \sum_{k \in \ZZ_0^K} \left|\left[A_{21} H(\bb,\bkappa) + A_{22} \pi_K F(\bb,\bkappa) \right]_k\right|  \omega_k  +
  \sum_{\substack{K < \kk \leq 3K\\k \in \ZZ_0}} \left| \lambda^{-1}_k F_k(\bb,\bkappa)]\right|  \omega_k 
  \right] .
\]


\subsection{Expressions for the derivatives}

In order to derive the bounds~\eqref{e:boundZ11}--\eqref{e:boundW22}
we first derive expressions for the derivatives.
Recalling the definition of $\Lambda$ in~\eqref{e:defLambda} and $\Phi$ in~\eqref{e:defPhi} and $\orbitcount$ in~\eqref{e:orbitcount}, we conclude from~\eqref{e:Fcondensed} that
\begin{equation}\label{e:FLPhi}
  D_b F(\kappa,b)  = \Lambda + \orbitcount D_b \Phi(b),
\end{equation}
which, based on~\eqref{e:defsigma}, we express component-wise as
\begin{equation}\label{e:DbF}
  D_{b_{k'}} F_k (\kappa,b)=  
  \PP(\Dkk) |\Gacts| \delta_{k, k'}
  + 3 \, |\Gacts| \sum_{k'' \in \Gacts[k']}  \talpha(k',k'') \conv{\sigma(m\e_0+b)^2}_{k-k''}.
\end{equation}
The other partial derivative of $F$ is given by
\begin{equation}\label{e:DkappaF}
  D_{\kappa_j} F_k (\kappa,b)= \PP'(\Dkk)  \Dk^j \,|\Gacts|\, b_k .
\end{equation}

To compute the derivatives of $H_j$ with respect to $b_k$
we observe that $\PP(\Dkk) = \gamma^{-2} \Dkk  - 1 + (\Dkk) ^{-1} $
is invariant under the group action.
Hence, it follows from Lemma~\ref{l:symdiff} (using the property $\Dkj[-k]{j}=\Dkj[k]{j}$ and~\eqref{e:Hwrite}) that
\begin{equation}\label{e:DbH}
  D_{b_k}  H_j (\kappa,b)=  \PP'(\Dkk)    \Dk^j |\Gacts|  \sigma(b)_{-k} .
\end{equation}
Furthermore, it follows from~\eqref{e:Hrewrite} that
\begin{equation}\label{e:DkappaH}
  D_{\kappa_{j'}}  H_j (\kappa,b)=  \frac{1}{2} \sum_{k \in \ZZ_0} \PP''(\Dkk)    \Dk^j \Dk^{j'} |\Gacts|  b_k \sigma(b)_{-k} .
\end{equation}

\begin{remark}\label{r:symmetricHessian}
Let $\tE (\kappa, b) \bydef E(\kappa,\sigma(m\e_0+ b))$. 
It is readily seen from Lemmas~\ref{l:symdiff} and~\ref{l:symdiffpower} that
\begin{equation}\label{e:tEtf}
  D_{b_k}\tE (\kappa, b)  = \tf_{-k}(\kappa,\sigma(m\e_0 + b)),
\end{equation}
due to the decision to premultiply by $|\Gacts|$ in the definition~\eqref{e:tf} of $\tf$.
By Remark~\ref{r:explicitGamma} and the definition of $F$ in~\eqref{e:defFF}, Equation~\eqref{e:tEtf} can be rewritten as
\[
  D_{b_k}\tE (\kappa, b) = \phi_k F_{\tau(k)}(\kappa,b),
\]
so that (interpreting the derivative as a gradient)
\[
   D_b \tE (\kappa, b) = \J \;\! F(\kappa, b),
\]
and analogously, in view of~\eqref{e:dEdkappa}, for the full derivative we have
\[
  D \tE (\kappa, b) = \J \;\! \FF(\kappa, b). 
\]  
For the second derivative we obtain
\begin{equation}\label{e:D2EisJDF}
  D^2 \tE (\kappa, b) = \J \;\! D \FF(\kappa, b). 
\end{equation}
The similarity between~\eqref{e:DkappaF} and~\eqref{e:DbH} is thus explained by the symmetry of the second derivative of the energy: $D_{\kappa_{j}} D_{b_k} \tE = D_{b_k} D_{\kappa_{j}} \tE$.
\end{remark}


\subsection{The bounds $\ZY{i}$}
\label{s:boundZ}

We are looking for estimates of the form
\begin{alignat*}{1}
  | \pi_\kappa [I  - \AA[D\FF(\bx) ]v |_{\FJ} 
  \hsp & \leq  
   \ZY{1}_1 | \pi_\kappa v|_{\FJ} + \ZY{1}_2 \| \pi_b v\|_{\X_0} 
   , \\
  \| \pi_b [I  - \AA[D\FF(\bx) ]v \|_{\X_0}  & \leq  
   \ZY{2}_1 | \pi_\kappa v|_{\FJ} + \ZY{2}_2 \| \pi_b v\|_{\X_0}  ,
\end{alignat*}
uniformly for
$v\in\BB_{(1,1)}(0)$.

We first split
\begin{equation}\label{e:IADF1}
I - \AA D\FF(\bx)
= [I (\pi_\kappa+\pi_K) - \AA (\pi_\kappa+\pi_K) D\FF(\bx)] 
 + [I \pi_\infty - \AA \pi_\infty D\FF(\bx)] .
\end{equation}
Since $\pi_\infty \bx$ vanishes, when we evaluate the derivatives~\eqref{e:DbH} and~\eqref{e:DkappaF} at the numerical approximation, the following tail terms vanish:
\begin{alignat}{1}
	D_b H(\bx) \pi_\infty &=0,  \label{e:DbHbx0}\\
	\pi_\infty D_\kappa F(\bx) &=0 . \label{e:DkappaFbx0}
\end{alignat}
In view of~\eqref{e:DkappaFbx0} the tail term in~\eqref{e:IADF1} thus reduces to 
\begin{equation}\label{e:IADF2}
 I \pi_\infty - \AA  \pi_\infty D \FF(\bx) = I \pi_\infty - \Lambda^{-1} \pi_ \infty [\Lambda + D_b \Psi(\bx) ] \pi_b= - \Lambda^{-1} \orbitcount \pi_ \infty D_b \Phi(\bb) \pi_b,
\end{equation}
where we have used the decomposition~\eqref{e:FLPhi}.

By combining~\eqref{e:IADF1} and~\eqref{e:IADF2} we infer that the restriction of $I - \AA D\FF(\bx)$ to the subspace $\CJ$ may be expressed as
($I_\kappa$ being the identity on $\CJ$) 
\begin{alignat*}{1}
  \pi_\kappa [I - \AA D\FF(\bx)] \Bigl|_{\CJ} & = 
  I_\kappa - A_{11} D_\kappa H(\bx) - A_{12} D_\kappa \pi_K F(\bx) , \\
  \pi_b [I - \AA D\FF(\bx)] \Bigl|_{\CJ} & = 
  - A_{21} D_\kappa H(\bx) - A_{22} D_\kappa \pi_K F(\bx) ,
\end{alignat*}
which are  $J\times J$ and  $N \times J$ matrices, respectively.
Both of these can be computed using interval arithmetic.
We then set
\begin{alignat*}{1}
	\ZY{1}_{1} &= \roundup \| I_\kappa - A_{11} D_\kappa H(\bx) - A_{12} D_\kappa \pi_K F(\bx) \|_{B(\FJ,\FJ)},\\
 	\ZY{2}_{1} &= \roundup \|  - A_{21} D_\kappa H(\bx) - A_{22} D_\kappa \pi_K F(\bx) \|_{B(\FJ,\X_0)}.
\end{alignat*}
These operator (matrix) norms can be calculated using the expressions~\eqref{e:norm11} and~\eqref{e:norm21}.

By combining~\eqref{e:IADF1} and~\eqref{e:IADF2} with~\eqref{e:DbHbx0} we
infer that
\begin{alignat}{2}
  \pi_\kappa [I - \AA D\FF(\bx)] \Bigl|_{\X_0} & =
  - A_{11} D_b H(\bx) \pi_K - A_{12} [\pi_K \Lambda + \orbitcount  D_b \pi_K \Phi(\bb)] ,
  \label{e:IADF3}\\
  \pi_b [I - \AA D\FF(\bx)] \Bigl|_{\X_0} & =
  - A_{21} D_b H(\bx) \pi_K - A_{22} [\pi_K \Lambda + \orbitcount D_b \pi_K \Phi(\bb)] -
  \Lambda^{-1} \orbitcount \pi_\infty \Phi(\bb). \label{e:IADF4}
\end{alignat}
The operator norm of~\eqref{e:IADF3} can be expressed (see~\eqref{e:l1sup}) as
\begin{equation}\label{e:supcol}
  \left\| \pi_\kappa [I - \AA D\FF(\bx)] \Bigl|_{\X_0} \right\|_{B(\X_0,\FJ)}
  = \sup_{k\in \ZZ_0} \omega_k^{-1} |  \pi_\kappa [I - \AA D\FF(\bx)] \e_k |_{\FJ}.
\end{equation}
We observe that
\begin{equation}\label{e:vanish1}
  \bigl( - A_{11} D_b H(\bx) \pi_K - A_{12} \pi_K \Lambda \bigr) \e_k =0
  \qquad \text{for } \knorm > K
\end{equation} 
and
\begin{equation}\label{e:vanish2}
  D_b \pi_K \Phi(\bb) \e_k = 0 \qquad \text{for } \knorm > 3K ,
\end{equation}
since $\Phi$ is cubic.
Writing $I_b=\pi_K+(\pi_{3K}-\pi_K)+(I_b-\pi_{3K})$ with
 $I_b$ the identity on $\X_0$, we apply~\eqref{e:splitl1norm} twice 
 and compute, in view of~\eqref{e:IADF3}, \eqref{e:supcol}, \eqref{e:vanish1} and~\eqref{e:vanish2},
\begin{alignat*}{1}
	\UUU_{1} &\bydef \max_{k\in\ZZ_0^K} \omega_k^{-1} | A_{11} D_b H(\bx) \e_k + A_{12} [\pi_K \Lambda + \orbitcount  \pi_K D_b \Phi(\bb)] \e_k |_{\FJ} , \\
	\UUU_{2} &\bydef \max_{\substack{K < \knorm \leq 3K\\ k\in\ZZ_0}}  \omega_k^{-1} | A_{12} \orbitcount  \pi_K  D_b \Phi(\bb) \e_k |_{\FJ} .
\end{alignat*}
For the remaining term we have
\[
  \sup_{\substack{\knorm > 3K\\ k\in\ZZ_0}}  \omega_k^{-1} |  \pi_\kappa [I - \AA D\FF(\bx)]  \e_k |_{\FJ} =0,
\]
due to~\eqref{e:vanish1} and~\eqref{e:vanish2}.
Hence we set
\[
  \ZY{1}_2 = \roundup \max \bigl\{ \UUU_1, \UUU_2 \bigr\} .
\]

Similarly, 
the operator norm of~\eqref{e:IADF4} can be expressed as
\[
  \left\| \pi_b [I - \AA D\FF(\bx)] \Bigl|_{\X_0}  \right\|_{B(\X_0,\X_0)}
  = \sup_{k\in \ZZ_0} \omega_k^{-1} \|  \pi_\kappa [I - \AA D\FF(\bx)]  \e_k \|_{\X_0}.
\]
Using an analogous splitting, we define 
\begin{alignat*}{1}
	\VVV_{1} &\bydef \max_{k \in \ZZ_0^K} \omega_k^{-1} \| \e_k - A_{21} D_b H(\bx) \e_k - A_{22} [ \Lambda - \orbitcount  \pi_K  D_b \Phi(\bb)] \e_k + \Lambda^{-1} \orbitcount (\pi_{3K}-\pi_K) D_b \Phi(\bb) \e_k \|_{\X_0} , \\
	\VVV_{2} &\bydef \max_{\substack{K < \knorm \leq 3K\\k \in \ZZ_0}} \omega_k^{-1} \| A_{22} D_b \orbitcount \pi_K \Phi(\bb) \e_k +\Lambda^{-1} \orbitcount  (\pi_{5K} - \pi_K) D_b \Phi(\bb) \e_k \|_{\X_0} ,
\end{alignat*}
where we have used that~\eqref{e:DbF} implies that
\begin{equation}\label{e:bandwidth2K}
   (D_b \Phi(\bb) \e_k)_{k'} = 0 \qquad \text{for } \knorm[k'-k] > 2K.
\end{equation} 
Where this estimate differs most from the previous one is in that 
we now need to estimate the tail term
\begin{equation}\label{e:supk3K}
 \sup_{\substack{\knorm > 3K\\k \in \ZZ_0}} \omega_k^{-1}   \| \Lambda^{-1} \orbitcount  D\Phi(\bb) \e_k \|_{\X_0} .
\end{equation}
For any $\knorm > 3K$ it follows from~\eqref{e:bandwidth2K} that
\begin{alignat}{1}
  \omega_k^{-1} \| \Lambda^{-1} \orbitcount D\Phi(\bb) \e_k \|_{\X_0} &= 
  \sum_{\substack{\knorm[k']>K \\k' \in \ZZ_0}} \frac{1}{|P(\Dk[k']\bkappa)|} |D\Phi_{k'}(\bb) \e_k | \frac{\omega_{k'}}{ \omega_k } \nonumber \\
  & \leq \left[ \sup_{\substack{\knorm[k']>K \\k' \in \ZZ_0}} \frac{1}{|P(\Dk[k']\bkappa)|}    \right]   \sum_{k' \in \ZZ_0} |D_{b_k}\Phi_{k'}(\bb)| \frac{\omega_{k'}}{ \omega_k } .
  \label{e:tailZ1term}
\end{alignat}
We estimate the two factors in the righthand side separately, starting with the latter. We use the definition~\eqref{e:defsigma} of $\sigma$
and the fact that
$|\talpha(k',k'')|=1$, to infer that
\begin{alignat}{1}
   \sum_{k' \in \ZZ_0} \frac{\omega_{k'}}{\omega_{k}}  
  \left| D_{b_k}\Phi_{k'}(\bb) \right|
  &=   3 \sum_{k' \in \ZZ_0}   \frac{\omega_{k'}}{\omega_{k}}  \left| \sum_{k'' \in \Gacts}  \talpha(k, k'')  \conv{(\sigma(m\e_0+\bb))^2}_{k'-k''} \right|  \nonumber \\
  & \leq  3
  \sum_{k \in \ZZ_0}   \frac{\omega_{k'}}{\omega_{k}}   \sum_{k'' \in \Gacts}  \left|  \conv{(\sigma(m\e_0+\bb))^2}_{k'-k''} \right| \nonumber \\ 
  & \leq 3 \, \| \conv{(\sigma(m\e_0+\bb))^2}\|_\nu, \label{e:Z1tailfactor2}
\end{alignat}
where the final inequality follows from Lemma~\ref{l:shiftestimate}.
To estimate the first factor in the righthand side of~\eqref{e:tailZ1term} we use Lemma~\ref{l:Pestimate}, which provides a uniform bound
\begin{equation}\label{e:Z1tailfactor1}
  \sup_{\substack{\knorm[k']>K\\k' \in \ZZ_0}} \frac{1}{|P(\Dk[k']\bkappa)|}  \leq \CC^{[0]}(K).
\end{equation}
Combining~\eqref{e:Z1tailfactor2} and~\eqref{e:Z1tailfactor1} we estimate~\eqref{e:supk3K} by
\[
   \VVV_3 \bydef 3 \,\CC^{[0]}(K)  \, \| \conv{(\sigma(m\e_0+\bb))^2}\|_\nu .
\]
Finally, collecting all terms, we set
\[
  \ZY{2}_2 = \roundup \max \bigl\{ \VVV_1, \VVV_2, \VVV_3 \bigr\} .
\]


\subsection{The bounds $\WY{i}_{i'i''}$}
\label{s:boundW}

To estimate $\AA [D\FF(\bx+w)- DF(\bx)]v$ for $w\in \BBsym_r(0)$ and $v\in \BB_{(1,1)}(0)$, we write $\bx=(\bkappa,\bb)$, $w=(r_1\mu,r_2 a)$ and $v=(\mu',a')$ with 
$|\mu|_{\RJ},|\mu'|_{\FJ} \leq1$ and $\|a\|_{\X_0},\|a'\|_{\X_0} \leq 1$.
We then split
\begin{alignat*}{1}
  \AA [D\FF(\bx+w)- DF(\bx)]v
  &= \AA [D\FF(\bkappa+r_1\mu,\bb+r_2 a) - D\FF(\bkappa+r_1\mu,\bb)] v\\
  &\qquad+ \AA [D\FF(\bkappa+r_1\mu,\bb) - D\FF(\bkappa,\bb)] v,
\end{alignat*}
and estimate both terms separately. 
In these estimates we will assume a priori bounds
\begin{equation}\label{e:checkrstar}
  r_1 \leq \rstar_1  
  \qquad\text{and}\qquad
  r_2 \leq \rstar_2 .
\end{equation}
In particular, under the assumptions~\eqref{e:checkrstar} we obtain estimates, uniform in $|\mu|_{\FJ},|\mu'|_{\FJ} \leq1$ and $\|a\|_{\X_0},\|a'\|_{\X_0}\leq 1$ of the form
\begin{alignat*}{1}
  \| \pi_\kappa \AA [D_\kappa \FF(\bkappa+r_1\mu,\bb) 
  						- D_\kappa\FF (\bkappa,\bb)] \mu' \|_{\FJ} 
  \hsp & \leq \WY{1}_{11} r_1 , \\
  \| \pi_b \AA [D_\kappa \FF(\bkappa+r_1\mu,\bb) 
  						- D_\kappa\FF (\bkappa,\bb)] \mu' \|_{\X_0}
  & \leq \WY{2}_{11} r_1 , \\
  \| \pi_\kappa \AA [D_b \FF(\bkappa+r_1\mu,\bb) 
  						- D_b\FF (\bkappa,\bb)] a' \|_{\FJ} 
  \hsp & \leq \WY{1}_{21} r_1 , \\
  \| \pi_b \AA [D_b \FF(\bkappa+r_1\mu,\bb) 
  						- D_b\FF (\bkappa,\bb)] a' \|_{\X_0}
  & \leq \WY{2}_{21} r_1 , \\
  \| \pi_\kappa \AA [D_\kappa \FF(\bkappa+r_1\mu,\bb+r_2 a) 
  						- D_\kappa\FF (\bkappa+r_1\mu,\bb)] \mu' \|_{\FJ} 
  \hsp & \leq \WY{1}_{12} r_2 , \\
  \| \pi_b \AA [D_\kappa \FF(\bkappa+r_1\mu,\bb+r_2 a) 
  						- D_\kappa\FF (\bkappa+r_1\mu,\bb)] \mu' \|_{\X_0}
  & \leq \WY{2}_{12} r_2 , \\
  \| \pi_\kappa \AA [D_b \FF(\bkappa+r_1\mu,\bb+r_2 a) 
  						- D_b\FF (\bkappa+r_1\mu,\bb)] a' \|_{\FJ} 
  \hsp & \leq \WY{1}_{22} r_2 , \\
  \| \pi_b \AA [D_b \FF(\bkappa+r_1\mu,\bb+r_2 a) 
  						- D_b\FF (\bkappa+r_1\mu,\bb)] a' \|_{\X_0} 
  & \leq \WY{2}_{22} r_2 .
\end{alignat*}

Before proceeding to the eight estimates, we define, for any $k \in \ZZ_0$,
\begin{alignat}{1}
 \PPP^{[1]}_k (\rstar_1) &\bydef  \max_{|\kappa-\bkappa|_{\RJ}\leq \rstar_1} |P'(\Delta_k \kappa)|,  \label{e:PPP1}\\
 \PPP^{[2]}_k (\rstar_1) &\bydef  \max_{|\kappa-\bkappa|_{\RJ}\leq \rstar_1} |P''(\Delta_k \kappa)|, \label{e:PPP2}\\
 \PPP^{[3]}_k (\rstar_1) &\bydef  \max_{|\kappa-\bkappa|_{\RJ}\leq \rstar_1} |P'''(\Delta_k \kappa)|. \label{e:PPP3}
\end{alignat}
In view of~\eqref{e:normFJ} the set $|\kappa-\bkappa|_{\RJ}\leq \rstar_1$ is described by the product of intervals 
\begin{equation}\label{e:productinterval}
  \II_{\bkappa}(\rstar_1) \bydef \prod_{j=1}^J	[\bkappa_j(1 - \rstar_1), \bkappa_j(1+ \rstar_1)].
\end{equation}
Hence, for any $k \in \ZZ^K_0$ the value of $\PPP^{[i]}_k(\rstar_1)$, $i=1,2,3$ can be enclosed explicitly via interval arithmetic.


\subsubsection{The expressions for $\WY{i}_{11}$}
\label{s:Q1}

Using~\eqref{e:PPP3},
for any $r_1 \leq \rstar_1$ and any $\mu \in \RJ$ with $|\mu|_{\RJ}\leq 1$, the mean value theorem provides the estimate
\[
  |P''(\Delta_k (\bkappa+r_1\mu))-  P''(\Delta_k \bkappa)|
  \leq \PPP^{[3]}_k(\rstar_1) |\Delta_k|_{\FJ}^* r_1 ,
\]
with $\PPP^{[3]}_k$ given by~\eqref{e:PPP3}. 
Here we have used that, by definition of the dual norm,  and using that $|\mu|_{\FJ} \leq 1$, 
\[
  \biggl|\sum_{j'=1}^J \Delta^{j'}_k \mu_{j'} \biggr| \leq |\Delta_k|_{\RJ}^*
  \qquad \text{for } |\mu|_{\RJ} \leq 1.
\]
Based on~\eqref{e:DkappaH} we then estimate,
for any $| \mu' |_{\CJ} \leq 1$ and  $1 \leq j \leq J$,
\[
 r_1^{-1} \bigl|[D_\kappa H_j(\bkappa+r_1\mu,\bb) - D_\kappa H_j (\bkappa,\bb)] \mu' \bigr|
  \leq \RRR^{[1]}_j \bydef  \frac{1}{2} \sum_{k\in \ZZ^K_0} \PPP^{[3]}_k(\rstar_1) \, \Delta^j_k \, (\Dk \bkappa)^2 \, |\G.k| \,
  | \bb_k| \, |  \sigma(\bb)_{-k}|,
\]
where we have used that $|\Delta_k|_{\RJ}^*=|\Delta_k|_{\CJ}^* = \Dk \bkappa$ in view of~\eqref{e:choice}.

Similarly, we estimate, for $k \in \ZZ_0^K$, see~\eqref{e:DkappaF},
\[
  r_1^{-1} \bigl|[D_\kappa F_k(\bkappa+r_1\mu,\bb) - D_\kappa F_k(\bkappa,\bb)] \mu'\bigr| \leq \tRRR^{[1]}_k \bydef 
  \PPP^{[2]}_k(\rstar_1) \, (\Dk \bkappa)^2 \, |\Gacts| \, |\bb_k|.
\]
Finally, we take care of the premultiplication by $\AA$. We define 
$|A|$ by taking elementwise absolute value in the $(J+N)\times (J+N)$  matrix
and we write  $|A|_{ii'}$, $i,i'=1,2$ for the submatrices (Remark~\ref{r:matrixmatrix}).
We then set
\begin{alignat*}{1}
\WY{1}_{11} &= \roundup \bigl| |A|_{11} \RRR^{[1]} 
          + |A|_{12} \tRRR^{[1]} \bigr|_{\FJ}  ,\\
\WY{2}_{11} &= \roundup \bigl\| |A|_{21} \RRR^{[1]} 
          + |A|_{22} \tRRR^{[1]} \bigr\|_{\X_0}.
\end{alignat*}


\subsubsection{The expressions for $\WY{i}_{21}$}
\label{s:Q2}

We set, for $1 \leq j \leq J$ and $k \in \ZZ_0^K$ 
\[
  \widehat{\RRR}^{[j]}_k \bydef  \PPP^{[2]}_k \, \Delta^j_k \, (\Dk \bkappa) \, |\sigma(\bb)_{-k}|.
\] 
Then, based on~\eqref{e:DbH} and again using the mean value theorem and~\eqref{e:choice}, we find
\[
  r_1^{-1} \bigl| [D_b H_j (\bkappa+r_1\mu,\bb) - D_b H_j (\bkappa,\bb)] a' \bigr|
\leq \RRR^{[2]}_j \bydef \|\widehat{\RRR}^{[j]}\|_{\X_0}^* ,
\]
where, see~\eqref{e:dualest},
\[
\|\widehat{\RRR}^{[j]}\|_{\X_0}^* = \max_{k \in \ZZ_0^K}
\omega_k^{-1} |\widehat{\RRR}^{[j]}_k  |  .
\]
By an analogous, but simpler, estimate we find from~\eqref{e:DbF} that
\begin{equation}\label{e:Q2Fk}
  \bigl| [D_{b_{k'}} F_k (\bkappa+r_1\mu,\bb) - D_{b_{k'}} F_k (\bkappa,\bb)] 
  \bigr| \leq \delta_{kk'} r_1  \PPP^{[1]}_k (\rstar_1) \, (\Dk \bkappa) \, |\Gacts| .
\end{equation}
Hence, for any $\|a'\|_{\X_0}$ and any $k \in \ZZ_0^K$,
\begin{equation*}
  r_1^{-1} \bigl| [D_b F_k (\bkappa+r_1\mu,\bb) - D_b F_k (\bkappa,\bb)] a' \bigr|
   \leq \tRRR^{[2]}_k 
   \bydef \PPP^{[1]}_k(\rstar_1) \, (\Dk \bkappa) \, 
   \nu^{-\knorm}.
\end{equation*}
The operator $\AA$ splits into $\AA = A \pi_K + \Lambda^{-1} \pi_\infty$.  
We estimate the finite part 
\[
 \pi_K \AA [D_b \FF(\bkappa+r_1\mu,\bb) -D_b\FF (\bkappa,\bb)] a'
 =
 A  \pi_K [D_b \FF(\bkappa+r_1\mu,\bb) -D_b\FF (\bkappa,\bb)] a'
\]
by using~$\tRRR^{[2]}$ and a  matrix multiplication, as in Section~\ref{s:Q1}.
In addition, we need to estimate the tail 
\[
 r_1^{-1} \pi_\infty \AA [D_b \FF(\bkappa+r_1\mu,\bb) -D_b\FF (\bkappa,\bb)] a'
 =
 r_1^{-1}  \Lambda^{-1} \pi_\infty [D_b \FF(\bkappa+r_1\mu,\bb) -D_b\FF (\bkappa,\bb)] a'
\]
in the $\X_0$-norm for any $\|a'\|_{\X_0}$.
In view of~\eqref{e:Q2Fk} we thus require a
uniform estimate on 
$ \frac{\PPP^{[1]}_k(\rstar_1) \, (\Dk \bkappa)}{P(\Delta_k \bkappa)}$
for all $\knorm > K $.
Such an estimate is provided by Lemma~\ref{l:Pestimate}
under the assumption that $K$ satisfies the restrictions~\eqref{e:Krestrictions}, which we will assume throughout the remainder of Section~\ref{s:boundW}.
Indeed, we then have
\[
   \frac{\PPP^{[1]}_k(\rstar_1) \, \Dk \bkappa}{P(\Delta_k \bkappa)}
    \leq \CC^{[1]}(K,r_1^*) ,
	\qquad\text{for all } \knorm > K ,
\]
with $\CC^{[1]}$ defined in~\eqref{e:CC1}.
We then set
\begin{alignat*}{1}
\WY{1}_{21} &= \roundup \bigl| |A|_{11} \RRR^{[2]} 
          +  |A|_{12}  \tRRR^{[2]} \bigr|_{\FJ}  ,\\
\WY{2}_{21} &= \roundup \left[\bigl\| |A|_{21} \RRR^{[2]}
          +  |A|_{22}  \tRRR^{[2]} \bigr\|_{\X_0}
		  + \CC^{[1]}(K,r_1^*) \right].
\end{alignat*}

\begin{remark}
It is relatively straightforward to obtain slightly sharper bounds by treating
the term $|A|_{22}  \tRRR^{[2]}$ and the tail term simultaneously through~\eqref{e:l1sup}, rather than estimating them separately. Other refinements or alternative approaches, for example treating the multiplication by $A$ with an operator norm estimate, are also possible. We have chosen the current bounds because they are fairly easy to write down and the resulting formulas in the Sections~\ref{s:Q1}--\ref{s:QQ2} are rather uniform
in appearance, 	hence relatively straightforward to parse and code, while still being reasonably sharp.
\end{remark}
	
						
\subsubsection{The expressions for $\WY{i}_{12}$}
\label{s:QQ1}

To abbreviate notation we write
$q^j_k = P''(\Delta_k (\bkappa+r_1\mu)) \, (\Delta_k \mu') \, \Delta^j_k$,
so that
\begin{alignat}{1}  
	r_2^{-1} [D_\kappa H_j(\bkappa+r_1\mu,\bb+r_2 a) 
    						- D_\kappa H_j (\bkappa+r_1\mu,\bb)] \mu'
	\nonumber \\
	 & \hspace{-5cm} =  \sum_{k\in \ZZ^K_0} q^j_k |\Gacts| \,\frac{1}{2} \bigl[
	  a_k \sigma(\bb)_{-k} + \bb_k \sigma(a)_{-k} \bigr] 
	 + \frac{1}{2} r_2 \sum_{k\in \ZZ_0} q^j_k |\Gacts| a_k \sigma(a)_{-k} .
	 \label{e:QQ2split}
\end{alignat}
By~\eqref{e:DkjCginvariance} it holds that
\[
  q^j_{\gacts} = q^j_k = q^j_{-k}.
\] 
By using these properties and Lemma~\ref{l:symquad} (twice) we rewrite
\begin{equation*}
  \sum_{k\in \ZZ^K_0} q^j_k |\Gacts| \bb_k \sigma(a)_{-k} 
	 = \sum_{k\in \Z^3_0} q^j_k  \sigma(\bb)_k \sigma(a)_{-k}  
	 = \sum_{k\in \Z^3_0} q^j_k  \sigma(\bb)_{-k} \sigma(a)_{k} 
	  =  \sum_{k\in \ZZ^K_0} q^j_k |\Gacts| a_k \sigma(\bb)_{-k}. 
\end{equation*}
Hence the first sum in~\eqref{e:QQ2split} reduces to
\[
  \sum_{k\in \ZZ^K_0} q^j_k |\Gacts|  a_k \sigma(\bb)_{-k} .
\]
Since $|a_k| \leq \omega_k^{-1} = |\G.k|^{-1} \nu^{-\knorm}$, this
is estimated by (again using $|\Dk|^*_{\CJ} = \Dk \bkappa$)
\[
  \RRR^{[3]}_j \bydef \max_{k \in  \ZZ^K_0} 
  \PPP^{[2]}_k(\rstar_1) \, (\Dk \bkappa) \, \Delta^j_k \,|\sigma(\bb)_{-k} | \,   \nu^{-\knorm}.
\]  
The second term in the righthand side of~\eqref{e:QQ2split} is estimated by (interpreting it as a linear operator in $a_k$ for fixed $\sigma(a)_{-k}$ and using~\eqref{e:dualest})
\begin{equation}\label{e:r2sup}
 \rstar_2 \max_{k \in  \ZZ_0} \frac{1}{2}  \PPP^{[2]}_k(\rstar_1) \, (\Dk \bkappa)  \, \Dkj{j} \, |\Gacts|\, \omega_k^{-2}.
\end{equation}
For $k\in \ZZ_0^K$ we can just evaluate the argument of in the supremum in~\eqref{e:r2sup}. For $\knorm >K$
we estimate $\Dkj{j} \leq \bkappa_j^{-1} \Dk \bkappa$ 
and $\omega_k =|\Gacts| \nu^{\knorm} \geq \nu^{\knorm}$.
The expression~\eqref{e:r2sup} is then bounded through Lemma~\ref{l:Eterm} by $\rstar_2 \mathcal{U}^{[2]}_j$, where the latter factor is given by the explicitly computable expression
\[
  \mathcal{U}^{[2]}_j \bydef \frac{1}{2}  \max\left\{ 
   \max_{k \in  \ZZ^K_0}  \PPP^{[2]}_k(\rstar_1) \, (\Dk \bkappa)\, \Delta^j_k \, |\Gacts| \, \omega_k^{-2} 
  \, , \,  \bkappa_j^{-1} \EE^{[2]} (K,\nu,r_1^*)
  \right\} ,
\]
where $\EE^{[2]}$ is defined in~\eqref{e:defE2j}.

Next we estimate
\[
 r_2^{-1} [D_\kappa F_k(\bkappa+r_1\mu,\bb+r_2 a) - D_\kappa F_k(\bkappa+r_1\mu,\bb)]  \mu'
= P'(\Delta_k (\bkappa+r_1\mu)) \, \Delta_k \mu' \, |\Gacts| \, a_k
\]
for any $k \in \ZZ_0^K$ by
\[
\tRRR^{[3]}_k \bydef 
  \PPP^{[1]}_k (\rstar_1) \, (\Delta_k \bkappa) \, 
  \nu^{-\knorm} .
\]
Premultiplication by $\AA$ them leads to an estimate which is very similar to the one in Section~\ref{s:Q2}. The required tail bound is again provided by Lemma~\ref{l:Pestimate}.
We then set
\begin{alignat*}{1}
\WY{1}_{12} &= \roundup \bigl| |A|_{11} (\RRR^{[3]} + \rstar_2\mathcal{U}^{[2]}) 
          + |A|_{12}  \tRRR^{[3]} \bigr|_{\FJ}  ,\\
\WY{2}_{12} &= \roundup \left[ \bigl\| |A|_{21} (\RRR^{[3]} +\rstar_2\mathcal{U}^{[2]}) 
          + |A|_{22}  \tRRR^{[3]} \bigr\|_{\X_0}+ \CC^{[1]}(K,r_1^*)
		  \right].
\end{alignat*}


\subsubsection{The expressions for $\WY{i}_{22}$}
\label{s:QQ2}

Analogously to the second term in the righthand side of~\eqref{e:QQ2split},
we estimate
\[
	r_2^{-1} \bigl| [D_b H_j(\bkappa+r_1\mu,\bb+r_2 a) 
    						- D_b H_j (\bkappa+r_1\mu,\bb)] a' \bigr|
	\leq \max_{k \in  \ZZ_0}  \PPP^{[1]}_k (\rstar_1) \, \Delta^j_k \, |\Gacts| \, \omega_k^{-2},
\]
cf.~\eqref{e:r2sup}.
Analogously to Section~\ref{s:QQ1}, this is bounded by 
\[
  \mathcal{U}^{[1]}_j \bydef  \max\left\{ 
   \max_{k \in  \ZZ^K_0}  \PPP^{[1]}_k (\rstar_1) \, \Delta^j_k \, |\Gacts| \,  \omega_k^{-2} 
  \, , \,   \bkappa_j^{-1} \EE^{[1]}(K,\nu)
  \right\} ,
\]
where $\EE^{[1]}$ is defined in~\eqref{e:defE1j}.

Finally, 
we note that
\[
[D_b F(\bkappa+r_1\mu,\bb+r_2 a) 
    						- D_b F (\bkappa+r_1\mu,\bb)] a' 
= \orbitcount
[D_b \Phi(\bkappa+r_1\mu,\bb+r_2 a) 
    						- D_b \Phi (\bkappa+r_1\mu,\bb)] a' .
\]
We see from~\eqref{e:defsigma} that $\talpha(k',k'') a'_{k'} = \sigma(a')_{k''
}$ for all $k' \in \ZZ_0$ and $k'' \in \Gacts[k']$. 
Hence it follows that for all $k \in \ZZ_0$
\begin{alignat}{1}
  r_2^{-1} [D_b \Phi_k (\bkappa+r_1\mu,\bb+r_2 a) 
    						- D_b \Phi_k (\bkappa+r_1\mu,\bb)] a' 
							\nonumber \\
	&\hspace{-3cm}= 3 \conv{ [(m\e_0+ \sigma(\bb) +r_2 \sigma(a))^2 -(m\e_0+ \sigma(\bb))^2 ]\sigma(a') }_k  .
			\label{e:oneconvolutionterm}
\end{alignat}
To manipulate the righthand side in~\eqref{e:oneconvolutionterm} we set
\[
  \xi_k = 3 \conv{ [(m\e_0+ \sigma(\bb) +r_2 \sigma(a))^2 -(m\e_0+ \sigma(\bb))^2 ]\sigma(a') }_k   \qquad \text{for all } k \in \Z^3.
\]
Since $\xi \in \Xsym$, we can
apply~\eqref{e:compatiblenorms} and use the Banach algebra property~\eqref{e:BA} and the triangle inequality to estimate, for $r \in (0,r_2)$
\begin{alignat}{1}
  \| \xi \|_{\X_0}
   & \leq 
  \| \xi \|_{\X}
  \label{e:restrictedk} \\ & =
  \|  \xi \|_{\nu}
  \nonumber \\ & \leq
  6 \left(\|m\e_0+ \sigma(\bb)\|_{\nu} +r_2\| \sigma(a) \|_{\nu} \right) \| \sigma(a) \|_{\nu} \| \sigma(a') \|_{\nu},
  \nonumber
\end{alignat}
where we simply use the restriction to Fourier indices $k\in \ZZ_0$ and $k \in \ZZ$ to interpret the norms in~\eqref{e:restrictedk}.
Hence we obtain, uniformly for $\|\sigma(a)\|_\nu= \|a\|_{\X_0} \leq 1$ and  $\|\sigma(a')\|_\nu= \|a'\|_{\X_0} \leq 1$,
\begin{equation}\label{e:productestimate}
  \| r_2^{-1} [D_b \Phi(\bkappa+r_1\mu,\bb+r_2 a) 
    						- D_b \Phi (\bkappa+r_1\mu,\bb)] a' \|_{\X_0} 
   \leq  6 (m + \| \bb \|_{\X_0} + \rstar_2 ),
\end{equation}
where we have used that $\|m\e_0+ \sigma(\bb)\|_{\nu} = m+\|\sigma(\bb))\|_{\nu} = m+ \|\bb\|_{\X_0}$ by~\eqref{e:compatiblenorms}.
We then use~\eqref{e:splitl1norm} to write
\[
   \| (A_{22}  \pi_K +   \Lambda^{-1} \pi_\infty ) \orbitcount \|_{B(\X_0,\X_0)} =
   \max \bigl\{ \|A_{22} \orbitcount \|_{B(\X_0,\X_0)} , \|\Lambda^{-1} \orbitcount \pi_\infty \|_{B(\X_0,\X_0)} \bigr\}.
\]  
To estimate the tail 
we use Lemma~\ref{l:Pestimate} to bound the operator norm 
$\| \Lambda^{-1} \orbitcount \pi_\infty \|_{B(\X_0,\X_0)} \leq \CC^{[0]}(K) $.
It then follows from~\eqref{e:productestimate} and the definition of the operator norm that we can set
\begin{alignat*}{1}
\WY{1}_{22} &= \roundup \left[ \bigl| |A|_{11} \mathcal{U}^{[1]} \bigr|_{\FJ} 
          + 6 \bigl( m + \| \bb \|_{\X_0} +  \rstar_2 \bigr) 
		  \|A_{12} \orbitcount\|_{B(\X_0,\FJ)}  \right]  ,\\
\WY{2}_{22} &= \roundup \left[ 
		\bigl\| |A|_{21}  \mathcal{U}^{[1]} \bigr\|_{\X_0}  
		  +  6 \bigl( m + \| \bb \|_{\X_0} +  \rstar_2 \bigr)
		  \max \bigl\{ \|A_{22} \orbitcount\|_{B(\X_0,\X_0)} , \CC^{[0]}(K)  \bigr\}  \right] .
\end{alignat*}

\end{document}